\newtheorem{theorem}{Theorem}
\newtheorem{lemma}[theorem]{Lemma}
\newtheorem{proposition}[theorem]{Proposition}
\newtheorem{corollary}[theorem]{Corollary}
\newtheorem{fact}[theorem]{Fact}
\newtheorem{question}[theorem]{Question}
\newtheorem{definition}[theorem]{Definition}
\theoremstyle{definition}
\numberwithin{equation}{section}
\begin{document}

\newcommand{\cc}{\mathfrak{c}}
\newcommand{\N}{\mathbb{N}}
\newcommand{\BB}{\mathbb{B}}
\newcommand{\C}{\mathbb{C}}
\newcommand{\Q}{\mathbb{Q}}
\newcommand{\R}{\mathbb{R}}
\newcommand{\Z}{\mathbb{Z}}
\newcommand{\T}{\mathbb{T}}
\newcommand{\st}{*}
\newcommand{\PP}{\mathbb{P}}
\newcommand{\rin}{\right\rangle}
\newcommand{\SSS}{\mathbb{S}}
\newcommand{\forces}{\Vdash}
\newcommand{\dom}{\text{dom}}
\newcommand{\osc}{\text{osc}}
\newcommand{\F}{\mathcal{F}}
\newcommand{\A}{\mathcal{A}}
\newcommand{\B}{\mathcal{B}}
\newcommand{\I}{\mathcal{I}}
\newcommand{\X}{\mathcal{X}}
\newcommand{\Y}{\mathcal{Y}}
\newcommand{\CC}{\mathcal{C}}

\thanks{The research of the first named author was supported by the NCN (National Science
Centre, Poland) research grant no.\ 2020/37/B/ST1/02613.}

\subjclass[2010]{03E35,  28A33, 28A35, 28A60, 46B25, 46B26, 54D80}
\title[Countably tight dual ball with a nonseparable measure]
{Countably tight dual ball with a nonseparable measure}

\author{Piotr Koszmider}
\address{Institute of Mathematics  of the Polish Academy of Sciences,
ul. \'Sniadeckich 8,  00-656 Warszawa, Poland}
\email{\texttt{piotr.math@proton.me}}

\author{Zden\v ek  Silber}
\address{Institute of Mathematics of the Polish Academy of Sciences,
ul. \'Sniadeckich 8,  00-656 Warszawa, Poland}
\email{\texttt{zdesil@seznam.cz}}

\begin{abstract} 
We construct a compact Hausdorff space $K$ such that the space $P(K)$
of Radon probabiblity measures on $K$ considered with the weak$^*$ topology 
(induced from the space of continuous functions $C(K)$) is countably tight which
is a generalization of sequentiality (i.e., if a measure $\mu$ is in the closure
of a set $M$, there is a countable $M'\subseteq M$ such that
$\mu$ is in the closure of $M'$) but $K$ carries a Radon probability measure which has
uncountable Maharam type (i.e., $L_1(\mu)$ is nonseparable). 
The construction uses (necessarily) an additional set-theoretic assumption (the $\diamondsuit$ principle)
as it was already  known, by a result of Fremlin, that it is consistent that such spaces do not exist.

This should be compared with the result of Plebanek and Sobota who showed
that countable tightness of $P(K\times K)$ implies that all Radon measures on $K$
have countable type.  So, our example shows that
the tightness of $P(K\times K)$ and of $P(K)\times P(K)$ can be different
as well as $P(K)$ may have Corson property (C) while $P(K\times K)$ fails to have it answering a question of  Pol.
Our construction is also a  relevant example in the general context
of injective tensor products of Banach spaces complementing
recent results of Avil\'es,  Mart\'\i nez-Cervantes,
Rodr\'\i guez  and  Rueda Zoca.

\end{abstract}

\maketitle

\section{Introduction}
 
 Considerable attention was given in the literature to the question 
 about how big a Banach space $X$ (a compact space $K$) needs to be 
 to admit an isomorphic copy of $L_1(\mu)$ in $X^*$ for nonseparable $\mu$
 (to admit a nonseparable $\mu$) or how small must a nonseparable Banach space
  (a nonmetrizable compact space) be not to admit the existence of such measures.
 Certainly at least a partial motivation comes here from separable results:
 a compact $K$ continuously maps onto $[0,1]$ if and only if $K$ carries a nonatomic Radon measure,
 and a Banach space $X$ admits a subspace isomorphic to $\ell_1$ if and only
 if its dual $X^*$ admits a subspace isomorphic to $L_1([0,1])$ (\cite{pelczynski}).
 Also if $K$ is compact Hausdorff and metrizable, then  $L_1(\mu)$ is separable 
 for every Radon measure $\mu$ on $K$, moreover $L_1(\mu)$ does not isomorphically
  embed in $C(K)^*$ for $\mu$ nonseparable (this follows from 2.5 of \cite{haydon-l1}).
 
 It turned out that characterizatons of nonseparable analogues to these phenomena depend on
 additional set-theoretic axioms. For example, Pe\l czy\'nski's conjecture
 concerning Banach spaces (i.e., that
 $L_1(\{0,1\}^\kappa)$ embeds into $X^*$ if and only if $\ell_1(\kappa)$ embeds into $X$
 for a Banach space $X$ and each infinite cardinal $\kappa$) is independent from the usual axioms
 (see Section 4 of \cite{negrepontis})
 as well as the equivalence of a compact $K$ mapping onto $\{0,1\}^{\omega_1}$
 and carrying a nonseparable Radon measure.
 In fact, Martin's axiom {\sf MA} and the negation of the continuum hypothesis {\sf CH}
 imply such elegant characterizations (\cite{argyros-ma, fremlin-ma, plebanek-1st}) while {\sf CH} provides ``small'' nonmetrizable
 compact spaces (and corresponding Banach spaces $C(K)$) carrying
 nonseparable Radon measures. The classical three {\sf CH} small examples
 due to Haydon, Kunen and Talagrand (\cite{haydon-main, kunen-L, talagrand})
 were unified into one construction in Section 5 of \cite{negrepontis}.
 Many other striking consistent examples followed  (\cite{kunen-hs, plebanek-small, plebanek-app, plebanek-musing}).
 Papers \cite{kunen-ms, plebanek-large, negrepontis} offer some overviews of this research.
 In particular, whether first countable, Frechet-Urysohn or countably tight\footnote{The tightness of a 
 topological space $X$ is the minimal cardinal $\tau$ such that for every $A \subseteq X$ and $x \in \overline{A}$
 there is a subset $A_0$ of $A$ of cardinality at most $\tau$ such that $x \in \overline{A_0}$.} compact spaces
 may carry a nonseparable Radon measure is undecidable.
 
Another approach to understanding the property of embeddability
of nonseparable $L_1(\mu)$ into the dual could be to look at the topological properties (with respect to the weak$^*$ topology)
of the space of Radon measures $M(K)$ or probability measures $P(K)$ on $K$ instead
 of the topological properties of $K$ or isomorphic
properties of $C(K)^*$. The space $K$ homeomorphically embeds into compact $P(K)\subseteq M(K)$
and the weak$^*$ topology is smaller than the norm topology in the dual of a Banach space, so 
some natural requirements of being small concerning $P(K)$ are stronger than those on $K$ or $C(K)^*$ with the norm topology.
Here for example  the first countability
of $P(K)$ implies in {\sf ZFC}  that all measures on $K$ have countable type\footnote{By Theorem 3.5 of
\cite{dichotomy} and Proposition 2 of \cite{pol-sd}, if $P(K)$ is first countable, then each measure
 on $K$ is strongly countably determined.
On the other hand strongly countably determined measures have countable type (see \cite{damian}
page 167).}.
   
Plebanek and Sobota proved in \cite{damian} that if $P(K\times K)$ has countable tightness then $K$ does not carry
a Radon measure of uncountable type and asked if $K\times K$ can be replaced by $K$ in this result.
Our main result is the negative answer to this question:

\begin{theorem}\label{main} Assume $\diamondsuit$. 
There is a compact Hausdorff $K$ such that the space $P(K)$ of 
all probability Radon measures on $K$ has countable tightness in the weak$^*$ topology
but $K$ carries a Radon measure of uncountable type.
\end{theorem}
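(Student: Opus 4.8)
The plan is to realize $K$ as the Stone space of a Boolean algebra $\mathfrak{A}$ of cardinality $\omega_1$, built as a continuous increasing union $\mathfrak{A}=\bigcup_{\alpha<\omega_1}\mathfrak{A}_\alpha$ of countable subalgebras by recursion on $\alpha$, and carrying a finitely additive probability measure $\mu$ that extends to a Radon measure on $K$. Since $\diamondsuit$ implies $\mathsf{CH}$, we have $|K|\le\cc=\omega_1$, every Radon measure restricts to a finitely additive measure on $\mathfrak{A}$, and so $P(K)$ is homeomorphic to the space of finitely additive probability measures on $\mathfrak{A}$ with the topology of pointwise convergence on $\mathfrak{A}$. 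Writing $K_\alpha=\mathrm{Stone}(\mathfrak{A}_\alpha)$, each $P(K_\alpha)$ is a metrizable compactum and, the probability-measure functor being continuous, $P(K)=\varprojlim_\alpha P(K_\alpha)$. This is the framework in which both requirements must be met at once.

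For the nonseparable measure I would carry, throughout the recursion, an uncountable family witnessing uncountable Maharam type, with one crucial caveat: a continuous surjection from a countably tight compactum onto $\{0,1\}^{\omega_1}$ is impossible, because countable tightness passes to continuous images of compacta; hence the type cannot be carried by an independent family of clopen sets. Instead I would introduce continuous functions $f_\alpha\in C(K)$, one per step, arranging $\int f_\alpha\,d\mu=0$ and $\int f_\alpha f_\beta\,d\mu=\delta_{\alpha\beta}$, which yields an uncountable orthonormal system in $L_2(\mu)$ and hence a nonseparable $L_1(\mu)$. Since later stages only \emph{extend} $\mu$ and never redefine it on the algebra already present, these integrals are frozen once created, so preserving uncountable type reduces to never collapsing $\mu$ on the old algebra, which is the routine half of the bookkeeping.

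The substance is countable tightness of $P(K)$. As $P(K)$ is weak$^*$ compact, Arhangel'skii's theorem identifies its tightness with the supremum of the lengths of its free sequences, so it suffices to guarantee that $P(K)$ has no free sequence of length $\omega_1$; equivalently, every $(\mu_\alpha)_{\alpha<\omega_1}$ in $P(K)$ admits a $\delta$ with $\overline{\{\mu_\alpha:\alpha<\delta\}}\cap\overline{\{\mu_\alpha:\alpha\ge\delta\}}\neq\emptyset$. Here I would let $\diamondsuit$ anticipate such sequences: at stage $\delta$ the guessing sequence supplies a candidate initial segment $\langle\nu_\xi:\xi<\delta\rangle$ of finitely additive measures on $\mathfrak{A}_\delta$, and a standard club/reflection argument shows that for any genuine length-$\omega_1$ free sequence the guesses capture the traces of its initial segments at stationarily many $\delta$. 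At such a $\delta$ I would fix a cluster point $\nu^\ast$ of the guessed head in the metrizable $P(K_\delta)$ and then extend $\mathfrak{A}_\delta$ to $\mathfrak{A}_{\delta+1}$ so as to force $\nu^\ast$ to remain a cluster point of every admissible continuation, i.e.\ so that any measure of $P(K)$ correctly projecting onto $K_\delta$ and lying in the closure of the tail is pinned to agree with $\nu^\ast$ on the newly added generators. Then $\nu^\ast$ lies in the closures of both head and tail, destroying freeness at $\delta$ and contradicting the existence of the free sequence.

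The main obstacle is exactly this extension step. I must add new clopen sets (equivalently, refine the bonding map $K_{\delta+1}\to K_\delta$, plausibly by a Fedorchuk-style resolution with metrizable fibres) that are strong enough to tie heads and tails together at the chosen cluster value, yet weak enough that, first, the measure $\mu$ and the frozen integrals $\int f_\alpha f_\beta\,d\mu$ survive, and second, the refinement does not itself organize into a new free sequence at later stages. The difficulty is structural: adding clopen sets generally \emph{separates} measures and so pushes against countable tightness, and the entire point is to insert precisely enough structure to support the nonseparable measure while never allowing the accumulated separations to line up into an $\omega_1$-free sequence. Verifying that one coherent choice of bonding maps meets all of these demands simultaneously, together with the $\diamondsuit$-driven enumeration of every potential free sequence, is the heart of the argument; the preservation of uncountable type and the identification of $P(K)$ with the inverse limit are then comparatively routine.
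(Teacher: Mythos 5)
Your scaffolding --- an $\omega_1$-chain of countable structures (your Boolean algebras are the Stone duals of the paper's inverse system $K_\alpha\subseteq\{0,1\}^\alpha$), a $\diamondsuit$-sequence guessing traces of $\omega_1$-sequences of measures, a club-reflection argument, and a corrective move at a guessed stage that must then survive all later extensions --- is indeed the architecture of the paper's proof (Lemmas \ref{diamond-measures} and \ref{club}), your free-sequence reformulation via Arhangel'skii is legitimate, and an uncountable orthonormal system of continuous functions would witness uncountable type (though in the paper the type comes for free from the scheme itself: each successor step splits a closed nowhere dense $F_\alpha$ with $\mu_\alpha(F_\alpha)\geq\frac{3}{4}$ and halves $\mu$ on it, and Lemma \ref{lemma-unc-type} extracts nonseparability from these halvings, so no separate family needs to be carried). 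But everything you defer to ``the heart of the argument'' is the actual proof, and the one concrete mechanism you do propose would fail: at stage $\delta$ you cannot ``pin any measure of $P(K)$ lying in the closure of the tail to agree with $\nu^\ast$ on the newly added generators,'' because the members of the tail, and a fortiori the measures in its weak$^*$ closure, are arbitrary elements of a hypothetical sequence whose behavior on coordinates $\geq\delta$ is completely unconstrained at stage $\delta$; moreover the $\diamondsuit$-guess only captures the head, not the tail. What a construction step can control is which weak$^*$ closure relations among \emph{restrictions} to $C(K_\delta)$ survive the extension, and that is the route the paper takes: it avoids free sequences altogether, noting $w(P(K))\leq\omega_1$ so that tightness reduces to sets of size $\omega_1$, guessing the pair (head, limit point) and propagating the relation ``restricted limit lies in the closure of the restricted head'' upward.

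Two missing ingredients make that propagation work, and they are where the theorem lives. First, the successor step (Lemma \ref{main-lemma}): the new coordinate splits only the nowhere dense set $F_\alpha$, so $\pi_{\alpha,\alpha+1}$ is one-to-one over $K_\alpha\setminus F_\alpha$, and the gluing map $\phi_\alpha$ is built piecewise using the elementary halving Lemma \ref{half} so that each of countably many guessed approximating measures is split almost exactly in half on small clopen neighborhoods of $F_\alpha$, matching the exact halving of $\mu$ imposed by (C7); consequently every pre-specified relation $\rho\in\overline{\{\rho_n: n\in\N\}}^{w^*}$ persists to $K_{\alpha+1}$ regardless of how $\phi_\alpha$ is extended later. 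This resolves the tension you correctly flag, but not in the direction you suggest: the solution is not to add little separating structure, it is to add the \emph{same} splitting at every stage while distributing all guessed measures evenly across the two new halves. Second, preserving relations among restrictions is useless unless restrictions determine the measures at the top: the paper decomposes each measure as $f\mu+\nu$ with $\nu\perp\mu$, replaces $f$ by a Baire function depending on countably many coordinates (Propositions \ref{baire-L1} and \ref{baire-depends}), and imposes Haydon's condition (Definition \ref{haydon-cond}) --- null sets eventually avoid all $F_\gamma$ --- so that each singular part admits a unique extension from some countable stage (Proposition \ref{singular-collapses}). Your ``frozen integrals'' observation is the analogue of this for the absolutely continuous parts only; without the singular-part machinery and the successor lemma, the pinning-and-preservation step in your outline is a restatement of the problem rather than a proof.
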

 
Our construction falls into  a general scheme of Haydon-Kunen-Talagrand constructions  (see Section 3)
taking into account a more elaborate type of the successor stage (see Section 5) motivated by 
constructions of Fedorchuk (e.g. \cite{fedorchuk}) and developed by the first-named author (e.g. \cite{few}, \cite{fewsur}).
As our space is another counterexample to Pe\l czy\'nski's conjecture and must produce $K$
that does not map continuously onto $\{0,1\}^{\omega_1}$, some type of additional set-theoretic hypothesis is necessary.
Here we use Jensen's $\diamondsuit$ principle which is a strengthening of {\sf CH} (see Section 4) particularly
handy at transfinite constructions where apparently too many requirements need to be  followed.
This principle, like here, has been used by Fajardo in
\cite{rogerio-quotients} and recently by G\l odkowski in \cite{dimension} in constructions of interesting Banach spaces of the form
$C(K)$ where one needs to take care of apparently too many sets of Radon measures.
We do not know if the hypothesis can be weakened to {\sf CH}, for example.

Based on the above mentioned result of Plebanek and Sobota our construction has also applications
to the question of preservation of properties by products:

\begin{corollary} Assume $\diamondsuit$. There is a compact Hausdorff space $K$ such that
$P(K)$ has countable tightness while $P(K\times K)$ has uncountable tightness.
\end{corollary}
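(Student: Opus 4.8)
The plan is to derive the corollary as a direct formal consequence of Theorem~\ref{main} together with the theorem of Plebanek and Sobota quoted in the introduction. First I would apply Theorem~\ref{main}, which under $\diamondsuit$ furnishes a compact Hausdorff space $K$ for which $P(K)$ has countable tightness in the weak$^*$ topology and which simultaneously carries a Radon measure of uncountable type. The first of these two properties is already exactly the first assertion of the corollary, so the space $K$ supplied by Theorem~\ref{main} is the space we want, and nothing further needs to be constructed.

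It then remains only to verify that $P(K\times K)$ has uncountable tightness for this same $K$. Here I would invoke the contrapositive of the Plebanek--Sobota result from \cite{damian}: if $P(K\times K)$ had countable tightness, then $K$ would carry no Radon measure of uncountable type. Since the $K$ produced by Theorem~\ref{main} does carry such a measure, $P(K\times K)$ cannot have countable tightness, and hence, by the cardinal-valued definition of tightness recalled in the footnote, its tightness is uncountable. This exhibits countable tightness of $P(K)$ alongside uncountable tightness of $P(K\times K)$, which is precisely the statement to be proved.

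The only real content of this corollary lies in the existence statement of Theorem~\ref{main}; there is no genuine obstacle in the deduction itself, since it is a short combination of an already-established external result with the main theorem. The difficulty is therefore entirely deferred to the transfinite $\diamondsuit$-construction underlying Theorem~\ref{main}, and the corollary simply records what that construction yields once paired with the Plebanek--Sobota theorem, showing in particular that countable tightness of $P(\,\cdot\,)$ need not be preserved by finite products.
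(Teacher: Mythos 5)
Your proposal is correct and matches the paper's own (implicit) derivation exactly: take the space $K$ from Theorem~\ref{main} and apply the contrapositive of the Plebanek--Sobota theorem from \cite{damian} to conclude that $P(K\times K)$ cannot be countably tight. The paper treats this corollary precisely as such an immediate combination, so there is nothing to add.
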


Besides answering questions
from \cite{damian} this answers a question already asked by Pol in \cite{pol-sd} (Question B) long
before the result of Plebanek and Sobota. Here we should also recall a theorem of Malykhin (\cite{malykhin}) 
which says that countable tightness
is productive for compact spaces, so for example $P(K)\times P(K)$ for our $K$ has countable tightness.

Another context of our construction is  property (C) of Corson
which is a convex version of the Lindel\"of property:
 a Banach space $X$
has property (C)  if every collection of closed convex subsets of $X$ with empty intersection contains
a countable subcollection with empty intersection. Pol proved that it is equivalent to
a convex version of countable tightness of the dual ball of $X$. Namely, $X$ has property (C) if and only if 
whenever $x\in B_{X^*}$ is in the weak$^*$ closure of a set $A\subseteq B_{X^*}$, then
there is a  countable $A_0\subseteq A$ such that $x$ belongs to the weak$^*$ closed convex hull of $A_0$ (\cite{pol-c}).
Property (C) became a standard tool in nonseparable Banach space theory (\cite{fabian-etal, hajek-etal, kubis-book, zizler}),
for example it is a three space property (\cite{pol-c}) or it can be used
to characterize WLD Banach spaces as those spaces with property (C) which admits an $M$-basis (Theorem 3.3 of \cite{vwz}).
As noted in Theorem 5.6 of \cite{damian} the main result of \cite{damian} says that if $C(K\times K)$
has property (C), then $K$ does not carry a Radon measure of uncountable type. So Theorem \ref{main}
yields:

\begin{corollary}Assume $\diamondsuit$. There is a  compact Hausdorff $K$ such that
$C(K)$ has property (C) of Corson while $C(K\times K)$ does not have this property.
\end{corollary}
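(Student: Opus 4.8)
The plan is to take as $K$ the space produced by Theorem \ref{main} and to verify the two halves of the statement separately. The half concerning the product is immediate: by Theorem \ref{main} the space $K$ carries a Radon measure of uncountable type, so the contrapositive of Theorem 5.6 of \cite{damian}, recalled above, shows directly that $C(K\times K)$ does not have property (C). Hence the whole task reduces to proving that $C(K)$ \emph{does} have property (C), and for this I would isolate the following general fact and apply it with $L=K$ together with the countable tightness of $P(K)$ guaranteed by Theorem \ref{main}: \emph{for every compact Hausdorff space $L$, if $P(L)$ has countable tightness then $C(L)$ has property (C).}

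To prove this general fact I would invoke Pol's criterion stated above and observe that it suffices to show that the dual ball $B_{C(L)^*}$ --- the signed Radon measures of variation at most $1$, with the weak$^*$ topology --- is itself countably tight; indeed, if a measure lies in the weak$^*$ closure of a countable set it lies a fortiori in its weak$^*$ closed convex hull. Writing $M_+(L)$ for the compact set of positive measures of mass at most $1$ and letting $L'=L\sqcup L$ be the disjoint sum of two copies of $L$, each $\rho\in P(L')$ decomposes as $\rho=\rho_++\rho_-$ with $\rho_\pm\in M_+(L)$ and $\rho_+(L)+\rho_-(L)=1$, and the affine map
\[ T\colon P(L')\to B_{C(L)^*},\qquad T(\rho)=\rho_+-\rho_-, \]
is weak$^*$-to-weak$^*$ continuous and onto: surjectivity follows from the Jordan decomposition of a given $\mu$, padding the two copies by equal multiples of a fixed point mass when $\|\mu\|<1$. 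Being continuous between compact Hausdorff spaces, $T$ is closed; so if $\mu$ lies in the weak$^*$ closure of some $A\subseteq B_{C(L)^*}$, then picking one $T$-preimage of each point of $A$ to form $\tilde A$, the closed set $T(\overline{\tilde A})$ contains $A$ and hence $\mu$, so $\mu=T\nu$ for some $\nu\in\overline{\tilde A}$; countable tightness of $P(L')$ then produces a countable $\tilde A_0\subseteq\tilde A$ with $\nu\in\overline{\tilde A_0}$, and pushing forward gives $\mu=T\nu\in\overline{T(\tilde A_0)}=\overline{A_0}$ with $A_0=T(\tilde A_0)\subseteq A$ countable. Thus $B_{C(L)^*}$ is countably tight, as required.

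The one delicate point --- which I expect to be the main obstacle --- is the countable tightness of $P(L')=P(L\sqcup L)$, since countable tightness is not in general preserved by the continuous surjections that represent measures on a disjoint sum or on a one-point extension. I would handle it by the embedding $\rho\mapsto(\rho_+,\rho_-)$, which realises $P(L')$ as a closed subspace of $M_+(L)\times M_+(L)$: because tightness is hereditary and, by Malykhin's theorem, productive for compact spaces, it suffices to prove that $M_+(L)$ is countably tight. Off the origin the map $\nu\mapsto(\nu/\nu(L),\nu(L))$ is a homeomorphism of $M_+(L)\setminus\{0\}$ onto $P(L)\times(0,1]$ (the total mass is weak$^*$ continuous), and $P(L)\times(0,1]$ is a subspace of the compact countably tight space $P(L)\times[0,1]$, hence countably tight; since a nonzero measure in the closure of $A$ already lies in the closure of $A\setminus\{0\}$, this yields countable tightness of $M_+(L)$ at every nonzero point. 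At the zero measure the argument is elementary: $0\in\overline A$ forces the masses of elements of $A$ to have infimum $0$, so any sequence in $A$ along which the masses tend to $0$ is a countable set whose closure contains $0$. This completes the verification that $M_+(L)$, and therefore $P(L')$ and $B_{C(L)^*}$, is countably tight, proving the general fact and with it the corollary.
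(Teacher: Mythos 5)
Your proposal is correct, and at the top level it follows the same (essentially forced) route as the paper: take $K$ from Theorem \ref{main}, derive the failure of property (C) for $C(K\times K)$ from the contrapositive of Theorem 5.6 of \cite{damian}, and derive property (C) for $C(K)$ from countable tightness of the dual ball together with Pol's criterion \cite{pol-c}. The difference lies in how the tightness of $B_{C(K)^*}$ is obtained: the paper simply invokes the Fact stated in its preliminaries --- that the tightness of $B_{M(K)}$ equals the tightness of $P(K)$, cited from \cite{fr-plebanek} --- whereas you prove this equivalence from scratch, and your proof (the affine surjection $T\colon P(K\sqcup K)\to B_{C(K)^*}$, the embedding of $P(K\sqcup K)$ into $M_+(K)\times M_+(K)$, the analysis of $M_+(K)$ via the homeomorphism with $P(K)\times(0,1]$ plus the elementary argument at $0$, and Malykhin's productivity theorem \cite{malykhin}) is correct in every step and is essentially the standard argument behind the cited Fact, so you have made the corollary self-contained at the cost of length. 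One remark: your stated worry that countable tightness ``is not in general preserved by the continuous surjections'' is unfounded in the compact Hausdorff setting --- your own transfer argument for $T$ (pick one preimage per point of $A$; the image of the closure of the set of preimages is compact, hence closed, hence catches the limit point) shows that tightness never increases under a continuous surjection of compacta. Consequently you could have skipped the embedding into $M_+(K)^2$ and the separate treatment of the zero measure by exhibiting $P(K\sqcup K)$, or for that matter $B_{C(K)^*}$ directly, as a continuous image of the compact countably tight space $P(K)\times P(K)\times[0,1]$ via $(\pi,\sigma,t)\mapsto t\pi-(1-t)\sigma$, with the same point-mass padding trick ensuring surjectivity; Malykhin's theorem then applies exactly as in your argument.
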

This answers questions asked by Pol in \cite{pol-sd} (Question B) and \cite{pol-c} (Section 5.3).
Both of the above corollaries require some additional assumption like $\diamondsuit$
as under {\sf MA} and the negation of {\sf CH} countable tighness of
$K$ (productive by Malykhin's theorem) already implies property (C) of $C(K)$ and countable tightness of $P(K)$
(this is a consequence of the above mentioned result of Fremlin (\cite{fremlin-ma} and Theorem 3.2 of \cite{fr-plebanek}).

The above results on $C(K\times K)$ can also be seen as particular cases of results on injective
tensor products of Banach spaces. Recently  Avil\'es, Mart\'\i nez-Cervantes, Rodr\'\i guez and Rueda Zoca
investigated property (C) from this point of view in Section 7 of \cite{tensor} (for example, generalizing
the above theorem of Plebanek and Sobota in Corollary 7.7). In fact,
by a result of Marciszewski and Plebanek (Proposition 2.4 of \cite{witek}) the existence of
a measure of uncountable type on $K$ is not only equivalent to the existence
of an isomorphic embedding of $L_1(\{0,1\}^{\omega_1})$
into $C(K)^*$ but also to the existence of a nonseparable measure on $B_{C(K)^*}$, so one can
look at the topics mentioned in this introduction in terms of measures on the compact $B_{C(K)^*}$
which is partially done in \cite{tensor}. In particular, it is noted in Section 2.7 of \cite{tensor} that a construction
attributed to Kalenda and presented as Corollary 4.4. in \cite{plebanek-convex} already provides under {\sf CH}
an example of a Banach space (not of the form $C(K)$) 
with property (C) whose injective tensor square does not have property (C).

Despite the above progress it is still open if countable tightness of $P(K)$ is equivalent in {\sf ZFC} to property (C) of
$C(K)$, a question already asked by Pol in \cite{pol-sd} (Question A) and in \cite{pol-c} (the end of Section 5.1.).
Recently, generalizing a result for $C(K)$ spaces from \cite{fr-plebanek}, Mart\'\i nez-Cervantes and Poveda
proved in \cite{pfa} that for every Banach space $X$ the Proper Forcing Axiom {\sf PFA} implies that countable tightness of
$B_{X^*}$ is equivalent to sequentiality of $B_{X^*}$ which is equivalent to property (C) of $X$.
We mention two more questions:
\vfill
\break
\begin{question}\label{questions}$ $
\begin{enumerate}
\item Is it consistent that there is a compact Hausdorff space $K$ 
such that $P(K)$ is hereditarily separable and $K$ carries a nonseparable Radon measure?
\item Is it consistent that there is a compact Hausdorff space $K$ 
such that $P(K)$ is sequential and $K$ carries a nonseparable Radon measure?
\end{enumerate}
\end{question}
Concerning Question \ref{questions} (1) note that  (a) the dual ball of any nonseparable Banach space is never
hereditarily Lindel\"of in the weak$^*$ topology (Proposition 6.4 of \cite{biorth}), (b) the positive answer
would strengthen Theorem \ref{main} as hereditarily separable spaces are countably tight, (c) in \cite{kunen-hs}
the principle $\diamondsuit$ was used to construct a hereditarily separable (and hereditarily Lindel\"of) compact $K$ which carries
a nonseparable measure.

 Concerning Question \ref{questions} (2), we remark
that it is a version of a similar question asked in \cite{damian} (Problem 5.10) where sequentiality is
replaced by the Frechet-Urysohn property. The positive answer to this question
would strengthen Theorem \ref{main} as sequential spaces are countably tight.

\section{Preliminaries}

\subsection{Notation and terminology}

Let us first fix some notation. We use the notation from the books \cite{fabian-etal} (Banach spaces), \cite{engelking} (topology) and \cite{semadeni} (measure theory, topology and Banach spaces). Let us point out the most used notation in detail.
The symbols $\N$ and $\R$ will denote the sets of positive integers a real numbers respectively. The symbol $\omega$ will denote the first infinite ordinal and will be identified with $\N \cup \{0\}$ for notational convenience, $\omega_1$ will denote the first uncountable cardinal. If $A$ and $B$ are sets, their symmetric difference is defined as $A \triangle B = (A \setminus B) \cup (B \setminus A)$.

All topological spaces in this paper are assumed to be Hausdorff and all Banach spaces are over the field $\R$. For a compact space $K$ we denote by $C(K)$ the Banach space of continuous functions on $K$ equipped with the supremum norm. Its dual space $C(K)^*$ is identified in the canonical way, by the Riesz representation theorem, with the Banach space $M(K)$ of signed Radon measures with the variation norm. 
We write $P(K)$ for the subset of $M(K)$ consisting of Radon probability measures on $K$. Unless otherwise stated, we always consider $M(K)$ and $P(K)$ equipped with the weak$^*$ topology (coming from the predual $C(K)$). The set $P(K)$ is then compact and contains a homeomorphic copy of $K$.

Let $A, B$ be sets and $f: A \rightarrow B$ a function. For a subset $A'$ of $A$ we define the restriction $f|A': A' \rightarrow B$ by setting $(f|A')(a) = f(a)$ for $a \in A'$. In particular, if $x \in \{0,1\}^{\omega_1}$ and $\alpha < \omega_1$, we define the restriction $x|\alpha \in \{0,1\}^{\alpha}$ by setting $(x|\alpha) (\beta) = x(\beta)$ for $\beta <\alpha$. 
If $\alpha < \omega_1$ and $x \in \{0,1\}^{\omega_1}$ we set $\pi_{\alpha,\omega_1}(x) = x|\alpha$.

For a measure $\mu$ we denote by $|\mu|$ its total variation. If $\mu$ and $\nu$ are two Radon measures on $K$, we say that they are singular with respect for each other, and write $\mu \perp \nu$, if there is a Borel subset $E$ of $K$ with $|\mu|(E) = |\nu|(K \setminus E) = 0$. Note that if $\mu \perp \nu$ and $\mu + \nu$ is positive, then both $\mu$ and $\nu$ are positive as well.

We say that a topological space $X$ has countable tightness if for every $A \subseteq X$ and $x \in \overline{A}$ there is a countable $B \subseteq A$ such that $x \in \overline{B}$. The following result is well known (see e.g. \cite{fr-plebanek}).

\begin{fact}
    Let $K$ be a compact space. Then the tightness of $B_{M(K)}$ is the same as the tightness of $P(K)$.
\end{fact}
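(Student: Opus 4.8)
The plan is to prove two inequalities between the tightnesses, which I will write $t(P(K))$ and $t(B_{M(K)})$. One of them is immediate: $P(K)$ is the set of those $\mu\in M(K)$ satisfying the weak$^*$-closed conditions $\int f\,d\mu\ge 0$ for all $f\ge 0$ and $\int 1\,d\mu=1$, so it is a closed subset of $B_{M(K)}$; since tightness is monotone under passing to subspaces (if $x\in\overline{A}^{\,X}\cap Y$ and $A\subseteq Y$, a witnessing countable-size subset in $X$ witnesses in $Y$ as well), this gives $t(P(K))\le t(B_{M(K)})$ for free. All the content is in the reverse inequality $t(B_{M(K)})\le t(P(K))$.

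For that I would realize $B_{M(K)}$ as a continuous image of a measure space built out of $P(K)$, and exploit that continuous surjections between compacta cannot raise tightness. Concretely, define
\[ R\colon P(K)\times P(K)\times[0,1]\longrightarrow B_{M(K)},\qquad R(\mu,\nu,t)=t\mu-(1-t)\nu. \]
Testing against $f\in C(K)$ shows that $(\mu,\nu,t)\mapsto t\int f\,d\mu-(1-t)\int f\,d\nu$ is jointly continuous, so $R$ is weak$^*$-continuous, and it lands in $B_{M(K)}$ because $\|t\mu-(1-t)\nu\|\le t+(1-t)=1$. Surjectivity follows from the Jordan decomposition with a mass-balancing trick: given $\eta\in B_{M(K)}$ write $\eta=\eta^+-\eta^-$, fix any $\rho\in P(K)$, put $c=(1-\|\eta\|)/2\ge 0$, and split $\eta=(\eta^++c\rho)-(\eta^-+c\rho)$ into two positive measures whose masses sum to $\|\eta\|+2c=1$; normalizing them produces a preimage. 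Since the domain is compact and $B_{M(K)}$ is Hausdorff, $R$ is automatically a closed map.

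The engine is then a short lemma: if $f\colon X\to Y$ is a closed continuous surjection, then $t(Y)\le t(X)$. Indeed, for a closed map one has $f(\overline{B})=\overline{f(B)}$ for every $B\subseteq X$ (continuity gives $\subseteq$, closedness gives $\supseteq$); so if $y\in\overline{A}$ with $A\subseteq Y$, then $y\in\overline{A}=\overline{f(f^{-1}(A))}=f\bigl(\overline{f^{-1}(A)}\bigr)$, which yields some $x\in\overline{f^{-1}(A)}$ with $f(x)=y$. Applying tightness of $X$ at $x$ to the set $f^{-1}(A)$ gives $B\subseteq f^{-1}(A)$ with $|B|\le t(X)$ and $x\in\overline{B}$, whence $y=f(x)\in\overline{f(B)}$ with $f(B)\subseteq A$ of size at most $t(X)$. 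Applied to $R$ this yields $t(B_{M(K)})\le t\bigl(P(K)\times P(K)\times[0,1]\bigr)$.

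The main obstacle, and the final step, is to bound the tightness of this triple product by $t(P(K))$: this is precisely a productivity-of-tightness question for the compact space $P(K)$, and it is where an external input is needed. In the case that actually matters here, namely when $P(K)$ is countably tight, Malykhin's theorem that countable tightness is productive for compact spaces (\cite{malykhin}) applies to the two copies of $P(K)$ together with the metrizable factor $[0,1]$, so the product is countably tight and hence $t(B_{M(K)})\le\omega$. Combined with the easy inequality, this gives that $P(K)$ is countably tight if and only if $B_{M(K)}$ is, which is the form in which the Fact will be used. I expect this product step to be the genuine crux: the two copies of $P(K)$ sit independently inside the domain of $R$, so a self-product of $P(K)$ cannot be avoided, and only the productivity of tightness (here, Malykhin's theorem in the countable case) closes the argument.
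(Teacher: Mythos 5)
Your argument is correct, and it is essentially the intended one: the paper does not prove this Fact at all but cites it as well known from \cite{fr-plebanek}, where the standard device is exactly what you use --- realizing $B_{M(K)}$ as a continuous (hence, by compactness, closed) image of $P(K)\times P(K)\times[0,1]$ via $(\mu,\nu,t)\mapsto t\mu-(1-t)\nu$ and then invoking Malykhin's product theorem \cite{malykhin}. Your easy inequality, the verification that $R$ is a weak$^*$-continuous surjection, and the closed-map lemma $t(Y)\le t(X)$ are all sound. Two small points deserve attention. First, in the surjectivity step the phrase ``normalizing them produces a preimage'' breaks down in the degenerate case where one of $\eta^{\pm}+c\rho$ has mass zero (this forces $c=0$, i.e.\ $\|\eta\|=1$ with $\eta$ or $-\eta$ already in $P(K)$); there one takes $t\in\{0,1\}$ and chooses the other coordinate of the preimage arbitrarily, since its coefficient vanishes --- a trivial repair, but worth stating. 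Second, the Fact asserts equality of tightness as a cardinal function, whereas your final step, as written, only closes the countably tight case. This is harmless: Malykhin's theorem holds for compact spaces at every cardinal, in the form $t(X\times Y)=t(X)\cdot t(Y)$, and since $t([0,1])=\omega$ the same chain gives $t(B_{M(K)})\le t\bigl(P(K)\times P(K)\times[0,1]\bigr)=t(P(K))$ in full generality; you should invoke it in that strength rather than only in the countable-tightness form, and then your proof establishes the Fact exactly as stated.
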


Next we recall one of the equivalent definitions of the Maharam type of a measure. 
For more information, see \cite[Chapter 33]{fremlin-book} or \cite{dichotomy,damian,plebanek-large}.

\begin{definition}
    Let $K$ be a compact space. The Maharam type (or just type) of a 
    Radon probability measure $\mu \in P(K)$ is the density of the Banach space $L_1(\mu)$.

    Measures of countable Maharam type will be called separable and measures with uncountable Maharam type will be called nonseparable.
\end{definition}

\subsection{Baire sets}

As we will work with non-metrizable compact spaces, we will need to make the distinction
 between Borel sets and Baire sets. Let us recall the definition of Baire sets.

\begin{definition}
    A Baire subset of a compact set $L$ is an element of the $\sigma$-algebra generated by closed $G_\delta$-subsets of $L$. A Baire function $f: L\rightarrow \R$ is any function such that $f^{-1}[U]$ is a Baire set
    for any open $U\subseteq \R$.
\end{definition}

Clearly, if $L$ is metrizable (or more generally if every closed subset of $L$ is $G_\delta$, i.e. if 
$L$ is perfectly normal) then a set $A \subseteq L$ is Borel if and only if it is Baire. 
It is easy to see that in a totally disconnected compact space $L$ 
(which will be the case in our construction) Baire sets can be equivalently described as the elements of the 
$\sigma$-algebra generated by clopen subsets of $L$. 

The set of all bounded Baire functions on a compact space $L$ is the smallest set of functions on $L$
which contains continuous functions and is closed under pointwise limits of uniformly
bounded sequences (see \cite[Definition 3.3.6.]{dales} and the following discussion).
For information about Baire functions and Borel measurability in the abstract setting see \cite{spurny}.

Some known facts about Baire functions (which actually hold for real functions on any measurable space, 
see e.g. \cite{rudin}) we will use are the following: Every Baire function is a pointwise limit of a sequence
of simple Baire functions, moreover, a non-negative Baire function is a pointwise supremum of a 
sequence of simple Baire functions. A pointwise limit of a sequence of Baire functions is a Baire function.

It follows from the definition that for two positive Radon measures $\mu,\nu$ 
on some compact space $L$, if $\mu \perp \nu$, then there is a Borel subset $E$ 
of $L$ such that $\mu(E) = \nu(L\setminus E) = 0$. The next results will show us that we can choose $E$ to be not only Borel, but also Baire.

\begin{lemma} \label{lemma-Borel-Baire}
    Let $L$ be a compact space and let $\mu,\nu$ be positive Radon measures on $L$. 
    Then for every Borel subset $E$ of $L$ there is a Baire subset $E'$ of $L$ such that $\mu(E\triangle E') = \nu(E\triangle E')=0$.
\end{lemma}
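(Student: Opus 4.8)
The plan is to reduce the two-measure statement to a single measure and then produce the approximating Baire set through a regularity argument, obtaining it as a countable intersection of cozero sets.

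First I would set $\lambda = \mu + \nu$, which is again a positive Radon measure, finite since $L$ is compact, and observe that $\mu(A) \le \lambda(A)$ and $\nu(A) \le \lambda(A)$ for every Borel set $A$. Consequently it suffices to find, for a given Borel set $E$, a Baire set $E'$ with $\lambda(E \triangle E') = 0$; both required equalities $\mu(E \triangle E') = \nu(E \triangle E') = 0$ then follow at once, since a set of $\lambda$-measure zero has $\mu$- and $\nu$-measure zero. This collapses the statement into a single-measure approximation problem.

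Next I would introduce the collection
\[
\mathcal{S} = \{E \subseteq L : E \text{ is Borel and } \lambda(E \triangle E') = 0 \text{ for some Baire set } E'\}
\]
and check that it is a $\sigma$-algebra. Closure under complements is immediate because $E^c \triangle E'^c = E \triangle E'$ and complements of Baire sets are Baire. Closure under countable unions follows from the inclusion $\left(\bigcup_n E_n\right) \triangle \left(\bigcup_n E_n'\right) \subseteq \bigcup_n (E_n \triangle E_n')$: if each $E_n$ is approximated by a Baire set $E_n'$, then $\bigcup_n E_n'$ is Baire, and the symmetric difference on the left is a Borel subset of a $\lambda$-null set, hence itself null. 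Since the Borel $\sigma$-algebra is generated by the closed sets, it then remains only to verify that every closed set belongs to $\mathcal{S}$.

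The heart of the argument, and the step I expect to require the most care, is the approximation of a closed (equivalently, compact) set $C$ by cozero sets from outside. Here I would combine outer regularity of $\lambda$ with complete regularity of the compact Hausdorff space $L$: for every open $U \supseteq C$, Urysohn's lemma yields a continuous $f \colon L \to [0,1]$ with $\mathbf{1}_C \le f \le \mathbf{1}_U$, so that the cozero set $\{f > 0\}$ satisfies $C \subseteq \{f > 0\} \subseteq U$. Taking the infimum over all such $U$ gives $\lambda(C) = \inf\{\lambda(V) : V \text{ cozero}, \ C \subseteq V\}$. Choosing cozero sets $V_n \supseteq C$ with $\lambda(V_n) \to \lambda(C)$ and putting $E' = \bigcap_n V_n$, I obtain a Baire set (a countable intersection of cozero sets) with $C \subseteq E'$ and $\lambda(C) \le \lambda(E') \le \lambda(V_n) \to \lambda(C)$, so that $\lambda(E' \setminus C) = \lambda(E') - \lambda(C) = 0$, where finiteness of $\lambda$ is used to subtract. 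Since $C \triangle E' = E' \setminus C$, this shows $C \in \mathcal{S}$, and the $\sigma$-algebra argument then yields the conclusion for all Borel sets.
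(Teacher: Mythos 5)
Your proof is correct. The heart of it coincides with the paper's: both arguments approximate a closed set from the outside by a Baire set using outer regularity plus an interpolation step, and your cozero sets obtained from Urysohn's lemma play exactly the role of the closed $G_\delta$ sets the paper interpolates using normality (in a normal space these are the same objects up to complementation, so this is a cosmetic difference). Where you genuinely diverge is in the surrounding structure. First, the paper handles the two measures simultaneously, choosing open $E_n \supseteq E$ along which both $\mu(E_n) \rightarrow \mu(E)$ and $\nu(E_n) \rightarrow \nu(E)$, whereas you collapse to the single measure $\lambda = \mu + \nu$; this is a tidy reduction, implicitly equivalent to what the paper does, and it extends verbatim to countably many measures via a suitable weighted sum $\sum_n 2^{-n}\mu_n$. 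Second, and more substantially, the passage from closed sets to general Borel sets is different: the paper exhausts $E$ from inside by compact sets $E_n$, covers each by a Baire set of equal measure using the closed case, and takes the union, so it leans on inner regularity by compacta; you instead run the standard good-sets argument, checking that the approximable sets form a $\sigma$-algebra containing the closed sets, which needs only outer regularity on closed sets. Both globalizations are sound; yours is the more textbook-standard and slightly more portable route, while the paper's is more hands-on and records along the way the containment $E' \supseteq E$ in the closed case, which it then exploits when forming the union (your closed-set construction yields the same containment, so nothing is lost on either side).
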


\begin{proof}
    First suppose that $E$ is closed. Then by regularity there are open sets $E_n \supseteq E$, 
    $n \in \N$, such that $\nu(E_n) \rightarrow \nu(E)$ and $\mu(E_n) \rightarrow \mu(E)$. 
    Now, as $L$ is normal and $E$ is closed, there are closed $G_\delta$ sets $E_n'$, 
    $n \in \N$, such that $E \subseteq E_n' \subseteq E_n$. Then $E' = \bigcap_{n \in \N} E_n'$ is a Baire set and it is clear that $E'$ satisfies the conclusion of our lemma. Further, we have $E' \supseteq E$.

    Now, let $E$ be any Borel set. Let, for $n \in \N$, $E_n \subseteq E$ be compact such that
    $\nu(E_n) \rightarrow \nu(E)$ and $\mu(E_n) \rightarrow \mu(E)$. 
    Now we use the first part of the lemma to find Baire sets $E_n'$, $n \in \N$, 
    such that $E_n' \supseteq E_n$ and $\nu(E_n') = \nu(E_n)$ and $\mu(E_n') = \mu(E_n)$. 
    Set $E' = \bigcup_{n \in \N} E_n'$. Then $E'$ is Baire and satisfies the conclusion of the lemma.
\end{proof}

Immediately we get the following corollary:

\begin{corollary} \label{corollary-singular-Baire}
    Let $L$ be a compact space and $\mu,\nu$ be two mutually singular positive Radon measures on $L$. Then there is a Baire subset $E$ of $L$ such that $\mu(E) = \nu(L \setminus E) = 0$.
\end{corollary}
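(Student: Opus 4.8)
The plan is to read this off directly from Lemma \ref{lemma-Borel-Baire}, which has already done all the substantive work; the corollary is essentially a bookkeeping argument about null sets. First I would invoke the observation recorded immediately before the lemma: since $\mu$ and $\nu$ are positive Radon measures with $\mu \perp \nu$, there is a \emph{Borel} set $E_0 \subseteq L$ satisfying $\mu(E_0) = \nu(L \setminus E_0) = 0$. This is exactly the Borel-level singularity witness, and the only thing left to do is to upgrade ``Borel'' to ``Baire''.

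Next I would apply Lemma \ref{lemma-Borel-Baire} to the Borel set $E_0$. This produces a Baire set $E \subseteq L$ such that
\[
\mu(E_0 \triangle E) = \nu(E_0 \triangle E) = 0 .
\]
The candidate witness for the corollary is this $E$, so it remains only to transfer the null-set conditions from $E_0$ to $E$.

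Finally I would verify the two required equalities using monotonicity and subadditivity of the (positive) measures together with the containments $E \setminus E_0 \subseteq E_0 \triangle E$ and $E_0 \setminus E \subseteq E_0 \triangle E$. For the first, writing $E \subseteq E_0 \cup (E \setminus E_0)$ gives
\[
\mu(E) \le \mu(E_0) + \mu(E \setminus E_0) \le \mu(E_0) + \mu(E_0 \triangle E) = 0 .
\]
Symmetrically, writing $L \setminus E \subseteq (L \setminus E_0) \cup (E_0 \setminus E)$ gives
\[
\nu(L \setminus E) \le \nu(L \setminus E_0) + \nu(E_0 \setminus E) \le \nu(L \setminus E_0) + \nu(E_0 \triangle E) = 0 .
\]
Hence $\mu(E) = \nu(L \setminus E) = 0$, as required.

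I do not expect any genuine obstacle here: the nontrivial regularity-plus-normality approximation is entirely contained in Lemma \ref{lemma-Borel-Baire}, and the corollary only needs the elementary fact that modifying a singularity-witnessing set by a set that is null for \emph{both} measures preserves both null conditions. The one point worth care is precisely that the symmetric difference is null for $\mu$ \emph{and} for $\nu$ simultaneously, which is why Lemma \ref{lemma-Borel-Baire} is stated with both measures at once rather than one at a time.
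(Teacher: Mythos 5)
Your proof is correct and matches the paper exactly: the paper states the corollary as an immediate consequence of Lemma \ref{lemma-Borel-Baire} applied to a Borel witness of singularity (whose existence is recorded just before the lemma), and your subadditivity verification simply spells out the "immediate" step. No gaps.
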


The reason we want to work with Baire functions instead of Borel functions is that, if defined 
on a closed subset of $\{0,1\}^{\omega_1}$, Baire functions depend only on countably many coordinates. 
In the following paragraphs we make this observation more precise.

\begin{definition}\label{def-depends}
    Suppose $X\subseteq L\subseteq\{0,1\}^{\omega_1}$ and 
    $f: L\rightarrow \R$ and $\alpha<\omega_1$. We say that the set
    $X$ depends on $\alpha$ (in $L$) if 
    $$X=\pi_{\alpha, \omega_1}^{-1}[\pi_{\alpha, \omega_1}[X]]\cap L.$$
    We say that the function $f$ depends on $\alpha$ (in $L$) if $f(x)=f(y)$ for any $x, y\in L$ such that $x|\alpha=y|\alpha$.
\end{definition}

Note that if a set $X$ or a function $f$ depends on $\alpha < \omega_1$ in $L$,
 it also depends on all $\alpha < \beta < \omega_1$ in $L$. The sets that depend 
 on $\alpha$ in $L$ are precisely the preimages under $\pi_{\alpha,\omega_1}$ intersected 
 with $L$. In other words, $X$ depends on $\alpha$ in $L$ if for every $x \in L$ we
  have that $x \in X$ if and only if $x|\alpha \in \pi_{\alpha,\omega_1}[X]$.  
  The fact that a function $f$ depends on $\alpha$ means that its value $f(x)$ 
  depends only on $x|\alpha$ for each $x \in L$. It is readily proved that a set 
  depends on $\alpha$ in $L$ if and only if its characteristic function depends on $\alpha$ in $L$.

In the next proposition we show that being a Baire function defined on a closed subset of
$\{0,1\}^{\omega_1}$ is the same as being Borel and depending on some $\alpha < \omega_1$.

\begin{proposition}\label{baire-depends} Suppose that $L\subseteq\{0,1\}^{\omega_1}$ is compact. A Borel function
$f: L\rightarrow \R$ is Baire if and only if  there is $\alpha<\omega_1$
such that $f$ depends on $\alpha$.
\end{proposition}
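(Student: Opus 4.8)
The plan is to factor $f$ through the projection onto a countable set of coordinates and reduce everything to the metrizable case, where Borel and Baire coincide. Write $\pi_\alpha := \pi_{\alpha,\omega_1}|L : L \to L_\alpha$, where $L_\alpha := \pi_{\alpha,\omega_1}[L] \subseteq \{0,1\}^\alpha$. Since $\alpha<\omega_1$ is countable, $\{0,1\}^\alpha$ is compact metrizable, hence so is its closed subspace $L_\alpha$; moreover $\pi_\alpha$ is a continuous surjection of compact spaces, so it is a perfect (closed, compact-fibred) quotient map. Saying that $f$ depends on $\alpha$ is exactly saying that $f = g\circ\pi_\alpha$ for a (unique) function $g: L_\alpha\to\R$.

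For the forward implication I would use the characterization, recalled above, of the bounded Baire functions on $L$ as the smallest family containing $C(L)$ and closed under pointwise limits of uniformly bounded sequences. Consider the family $\mathcal{D}$ of all bounded functions on $L$ depending on some countable set of coordinates. By Stone--Weierstrass every $h\in C(L)$ is a uniform limit of functions in the subalgebra generated by the coordinate projections, each of which depends on finitely many coordinates; since a uniform limit of functions each depending on countably many coordinates again depends on countably many coordinates, $C(L)\subseteq\mathcal{D}$. If $h_n\in\mathcal{D}$ depends on a countable $S_n$ and $h_n\to h$ pointwise with $\sup_n\|h_n\|<\infty$, then $h$ depends on $\bigcup_n S_n$, so $\mathcal{D}$ is closed under uniformly bounded pointwise limits. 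Hence $\mathcal{D}$ contains all bounded Baire functions, and ``depending on a countable set of coordinates'' is the same as ``depending on $\alpha$ for some $\alpha<\omega_1$'' (take $\alpha>\sup S$). A general Baire $f$ is handled by applying this to the bounded Baire function $\arctan\circ f$, which depends on $\alpha$ if and only if $f$ does.

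For the reverse implication, assume $f$ is Borel and depends on $\alpha$, and reduce (again via $\arctan$) to checking that $f^{-1}[U]$ is Baire in $L$ for every open $U\subseteq\R$. Each such set is Borel in $L$ and depends on $\alpha$, so the content is the set-level statement: a Borel $X\subseteq L$ depending on $\alpha$ is Baire. Writing $X=\pi_\alpha^{-1}[A]$ with $A=\pi_\alpha[X]\subseteq L_\alpha$, it suffices to prove that $A$ is Borel in $L_\alpha$: for then $A$ is Baire in the metrizable space $L_\alpha$, and the continuous preimage $\pi_\alpha^{-1}[A]=X$ of a Baire set is Baire in $L$. (Packaged for functions: it suffices that $g$ is Borel on $L_\alpha$; then $g$ is Baire on $L_\alpha$, and pulling the defining pointwise-limit scheme back along the continuous map $\pi_\alpha$ exhibits $f=g\circ\pi_\alpha$ as Baire.)

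To see that $A$ is Borel I would show that $A$ is simultaneously analytic and coanalytic in the Polish space $L_\alpha$ and invoke Suslin's theorem. Because $X$ depends on $\alpha$, each fibre of $\pi_\alpha$ lies entirely inside or entirely outside $X$, whence $A=\pi_\alpha[X]$ and $L_\alpha\setminus A=\pi_\alpha[L\setminus X]$; thus both $A$ and its complement are images, under the perfect map $\pi_\alpha$, of Borel subsets of $L$. The main obstacle is precisely the descriptive-set-theoretic input that such an image is analytic: the continuous image in a Polish space of a Borel subset of a compact Hausdorff space is analytic. The mechanism behind this is that $\pi_\alpha[\cdot]$ commutes with arbitrary unions and sends closed sets to closed sets, while the dual operator $\pi_{\alpha\#}(S)=\{y\in L_\alpha:\pi_\alpha^{-1}(y)\subseteq S\}$ commutes with arbitrary intersections and, since $\pi_\alpha$ is closed, sends open sets to open sets (so that, e.g., for a $G_\delta$ set $\bigcap_n U_n$ one gets $\pi_\alpha[\bigcap_n U_n]=\bigcap_n \pi_{\alpha\#}(U_n)$ when the set is $\pi_\alpha$-saturated); tracking these two operators along the Borel hierarchy is what produces a Suslin scheme for the image. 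Granting analyticity of both $A$ and $L_\alpha\setminus A$, Suslin's theorem gives $A\in\mathrm{Borel}(L_\alpha)$, which completes the argument. I expect the only genuinely delicate point to be this analyticity of perfect-map images, the metrizability of $L_\alpha$ and compactness of the fibres being exactly what makes it available.
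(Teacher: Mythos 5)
Your forward implication is correct and is essentially the paper's argument in a mildly different costume: the paper runs the induction over the family of (not necessarily bounded) functions depending on some countable ordinal, using that every Baire function is a pointwise limit of simple Baire functions and that clopen sets generate the Baire sets, while you use the quoted characterization of \emph{bounded} Baire functions plus Stone--Weierstrass and an $\arctan$ reduction. Both are fine and of the same difficulty.

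The backward implication, however, has a genuine gap. You correctly isolate the crux (a Borel set $X\subseteq L$ depending on $\alpha$ has Borel image $A=\pi_\alpha[X]$ in the metrizable quotient $L_\alpha$ -- this is exactly the ``observation'' the paper makes when it factors $f=f'\circ\pi_{\alpha,\omega_1}$ with $f'$ Borel), but the descriptive-set-theoretic input you propose, namely that \emph{the continuous image in a Polish space of a Borel subset of a compact Hausdorff space is analytic}, is false, and badly so. Fix pairwise distinct points $y_\xi\in\{0,1\}^\omega$, $\xi<\omega_1$, such that $S=\{y_\xi:\xi<\omega_1\}$ is dense in $\{0,1\}^\omega$ and not analytic; such an $S$ exists in {\sf ZFC}: if $\omega_1<\mathfrak c$, any dense set of size $\aleph_1$ works, since an uncountable analytic set contains a perfect set and hence has size $\mathfrak c$; under {\sf CH}, take a Bernstein set together with a countable dense set. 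Define $p_\xi\in\{0,1\}^{\omega_1}$ by $p_\xi|\omega=y_\xi$ and $p_\xi(\omega+\eta)=1$ iff $\eta=\xi$, and let $P=\{p_\xi:\xi<\omega_1\}$, $Z=\{x:\ x(\omega+\eta)=0\ \text{for all }\eta<\omega_1\}$. A point of $\overline{P}$ with some tail coordinate $\omega+\eta$ equal to $1$ must equal $p_\eta$, while (by density of $S$ and the finite support of basic neighbourhoods) every point of $Z$ is a limit of points of $P$; hence $\overline{P}=P\cup Z$ and $P=\overline{P}\setminus Z$ is the intersection of a closed set with an open set -- Borel of the lowest nontrivial level in the compact space $\overline{P}$. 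Yet $\pi_{\omega,\omega_1}[P]=S$ is not analytic, even though $\pi_{\omega,\omega_1}$ is a perfect map onto a Polish space. So there is no boldface pointclass bound at all on such images (one gets arbitrary sets of reals of size $\aleph_1$ already as images of locally closed sets), and Suslin's theorem has nothing to bite on.

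The reason your proposed mechanism cannot be repaired along the lines you sketch is that saturation holds only at the top level, for $X$ and $L\setminus X$: the moment you decompose $X$ along the Borel hierarchy, the pieces (e.g.\ the non-saturated $G_\delta$'s inside a saturated $F_{\sigma\delta}$) are no longer $\pi_\alpha$-saturated, so the identity $\pi_\alpha\bigl[\bigcap_n U_n\bigr]=\bigcap_n\pi_{\alpha\#}(U_n)$ you rely on is unavailable for them, and their images can already be as wild as the set $S$ above -- the counterexample $P=\overline{P}\cap(\,\{0,1\}^{\omega_1}\setminus Z\,)$ is precisely an image computation at the second level. (Your argument would be the standard correct one if $L$ itself were metrizable, where Lusin--Suslin applies; the whole difficulty here is that $L$ has weight $\omega_1$, so its open sets are uncountable unions of cylinders and its Borel sets are not Suslin-representable over a countable scheme.) For comparison, the paper does not attempt any such detour: it simply asserts that the induced $f'$ on the metrizable $\pi_{\alpha,\omega_1}[L]$ is Borel and composes; note also that only the forward implication of this proposition is ever used later in the paper. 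So your diagnosis of where the delicacy sits is accurate, but the key lemma you invoke to resolve it is false, and this direction of your proof does not go through as written.
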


\begin{proof}
First prove the forward implication.
    Let $\mathcal{F}$ denote the set of functions $f$ on $L$ for which there is some $\alpha < \omega_1$ 
    such that $f$ depends on $\alpha$. Then clearly $\mathcal{F}$ is an algebra under pointwise operations. 
    Hence, we will be done if we show that
    \begin{enumerate}[label=(\roman*)]
        \item $\mathcal{F}$ is closed under taking pointwise limits of sequences;
        \item $\mathcal{F}$ contains characteristic functions of Baire sets;
    \end{enumerate}
    as every Baire function is a pointwise limit of simple Baire functions.

    To show (i) let $(f_n)_{n \in \N} \subseteq \mathcal{F}$ and $f$ be its pointwise limit. 
    Let, for $n \in \N$, $\alpha_n < \omega_1$ be such that $f_n$ depends on $\alpha_n$. 
    Then $f$ depends in $\alpha = \sup_n \alpha_n$: let $x,y \in L$ be such that 
    $x|\alpha = y|\alpha$. Then for $n \in \N$ also $x|\alpha_n = y|\alpha_n$, 
    and thus $f_n(x)=f_n(y)$. Hence, $f(x) = \lim f_n(x) = \lim f_n(y) = f(y)$.
    
    To show (ii) let $\mathcal{S}$ be the family of subsets $E$ of $L$ such that 
    $\chi_E \in \mathcal{F}$. Then $\mathcal{S}$ is clearly closed under taking complements. 
    It is also closed under countable intersections by (i) as 
    $\chi_{\bigcap_{n \in \N} E_n} = \lim_N \chi_{\bigcap_{n \leq N} E_n}$ 
    and $\chi_{\bigcap_{n \leq N} E_n} = \prod_{n \leq N} \chi_{E_n}$. 
    Hence, $\mathcal{S}$ is a $\sigma$-algebra. But $\mathcal{S}$ contains all 
    clopen sets as those are finite unions of finite intersections of clopen sets of 
    the form $\pi_{\alpha,\omega_1}^{-1}[E]$ for some $\alpha < \omega_1$ and clopen 
    $E \in \{0,1\}^\alpha$. It follows that $\mathcal{S}$ must contain all Baire sets.
    
    To prove the backward implication we observe that a Borel function $f$ on $L$
    which depends on $\alpha<\omega_1$ is equal to $f'\circ \pi_{\alpha, \omega_1}$
    for some Borel $f': \pi_{\alpha,\omega_1}[L]\rightarrow \R$. But
    $\pi_{\alpha,\omega_1}[L]$ is metrizable, and thus $f'$ is Baire. Hence, $f$ is a composition of a continuous map and a Baire map, and so is Baire as well. 
\end{proof}

The following corollary immediately follows from the previous proposition 
and the mentioned observation that a set depends on $\alpha$ if and only if its characteristic function does.

\begin{corollary} \label{Baire-set-depends}
    Suppose that $L \subseteq \{0,1\}^{\omega_1}$ is compact.  The set $E \subseteq L$ is a Baire set if and only if 
    there is $\alpha < \omega_1$ such that $E$ depends on $\alpha$.
\end{corollary}

In the following proposition we show that every integrable function 
(with respect to some Radon measure) is equal to a Baire function almost everywhere. 
This, in combination with Proposition \ref{baire-depends}, will later allow us to reduce 
the case from a general integrable function to a function that depends on a countable coordinate.

\begin{proposition}\label{baire-L1} Suppose that $L\subseteq\{0,1\}^{\omega_1}$ is compact,
$\mu$ is a Radon measure on $L$ and $f\in L_1(\mu)$. Then
there is a Baire function $f'$ such that $f' = f$ $\mu$-almost everywhere.
\end{proposition}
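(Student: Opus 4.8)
The plan is to reduce a general integrable $f$ to simple functions, replace the pieces by Baire sets using Lemma \ref{lemma-Borel-Baire}, and then recombine the approximants into a single Baire function via a pointwise limit taken on a Baire "convergence set''. First I would reduce to the case where $f$ is a Borel function: since $\mu$ is Radon, every $f\in L_1(\mu)$ is $\mu$-measurable and hence (being measurable with respect to the completion of the Borel $\sigma$-algebra) agrees $\mu$-almost everywhere with a Borel function, and replacing $f$ by such a representative does not affect the conclusion. Having done this, I would write $f$ as a pointwise limit of a sequence $(s_n)_{n\in\N}$ of simple Borel functions, $s_n=\sum_{i=1}^{k_n} a_i^n\chi_{E_i^n}$ with each $E_i^n$ Borel (the usual dyadic truncation construction). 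Applying Lemma \ref{lemma-Borel-Baire} (with $\nu=\mu$) to each $E_i^n$ yields a Baire set $(E_i^n)'$ with $\mu\bigl(E_i^n\triangle (E_i^n)'\bigr)=0$; since the characteristic function of a Baire set is itself a Baire function, the simple function $s_n'=\sum_i a_i^n\chi_{(E_i^n)'}$ is Baire and agrees with $s_n$ off a $\mu$-null set $N_n$.

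Set $N=\bigcup_{n\in\N}N_n$, a $\mu$-null set, so that $s_n'=s_n$ on $L\setminus N$ for every $n$, whence $s_n'(x)\to f(x)$ for all $x\in L\setminus N$. The naive limit $\lim_n s_n'$ need not exist on all of $L$, so I would restrict attention to the convergence set
$$C=\bigcap_{k\in\N}\ \bigcup_{M\in\N}\ \bigcap_{m,n\geq M}\bigl\{x\in L : |s_n'(x)-s_m'(x)|\leq 1/k\bigr\}.$$
Each set $\{x\in L: |s_n'(x)-s_m'(x)|\leq 1/k\}$ is the preimage of a closed set under the Baire function $|s_n'-s_m'|$, hence Baire, and therefore $C$ is Baire, being built from Baire sets by countably many Boolean operations. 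Consequently $\chi_C$ is a Baire function, each product $s_n'\chi_C$ is Baire (Baire functions form an algebra closed under pointwise limits), and the sequence $(s_n'\chi_C)_n$ converges pointwise \emph{everywhere} on $L$: to $0$ on $L\setminus C$ and to the genuine real limit on $C$. Defining $f'=\lim_n (s_n'\chi_C)$ thus gives a Baire function. Finally, since $L\setminus N\subseteq C$ and on $L\setminus N$ one has $f'=\lim_n s_n'=\lim_n s_n=f$, we conclude $f'=f$ $\mu$-almost everywhere, as required.

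The main obstacle is precisely the last gluing step: each $s_n'$ coincides with $s_n$ only off a null set, so the Baire approximants converge to $f$ only off $N$, and a single globally defined Baire function must be extracted. The device of multiplying by $\chi_C$ for the Baire convergence set $C$ is what upgrades the merely almost-everywhere convergence into everywhere convergence of Baire functions, letting the closure of the Baire class under pointwise limits finish the argument.
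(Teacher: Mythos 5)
Your proof is correct and takes essentially the same route as the paper's: both replace the simple Borel approximants by simple Baire ones via Lemma \ref{lemma-Borel-Baire} and then repair the exceptional set by multiplying by the characteristic function of a Baire ``good'' set before passing to an everywhere-defined pointwise limit, which is Baire by closure under such limits. The only cosmetic differences are that the paper assumes $f\geq 0$ and uses suprema, truncating on the Baire set $M$ where $\sup_n s_n''=\infty$, whereas you work with signed $f$, use limits, and truncate on the Baire Cauchy set $C$; your explicit reduction to a Borel representative is a small point the paper leaves implicit.
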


\begin{proof}
    First note that it follows from Lemma \ref{lemma-Borel-Baire} that for every simple Borel function
     $s$ there is a simple Baire function $s'$ such that $s'=s$ $\mu$-almost everywhere. 
     Now let $f \in L_1(\mu)$. We can assume that $f$ is non-negative, and so there
      is a sequence $(s_n)_{n \in \N}$ of simple Borel functions such that 
      $f(x) = \sup_n s_n(x)$ for all $x \in L$. Let $(s_n'')_{n \in \N}$ be simple 
      Baire functions such that, for all $n \in \N$, $s_n'' = s_n$ $\mu$-almost 
      everywhere. Set $M = \{x \in L: \; \sup_n s_n''(x) = \infty\}$. 
      Then $\mu(M) = 0$ (as $f$ is $\mu$-integrable, and thus finite 
      $\mu$-almost everywhere, and for $\mu$-almost every $x \in L$ 
      it holds that $x \in M$ implies $f(x) = \infty$). 
      Further, $M$ is a Baire set as $M = \bigcap_n \bigcup_k \{x \in L: s_k''(x) > n\}$ 
      and $(s_k'')_{k \in \N}$ are Baire functions. Hence, for $n \in \N$ the functions 
      $s_n' = s_n'' \cdot \chi_{L \setminus M}$ are Baire and $s_n' = s_n'' = s_n$ 
      $\mu$-almost everywhere. Let $f' = \sup_n s_n'$. 
      Then $f'$ is Baire and $f'(x)$ is finite for all $x \in L$ (for $x \in M$ 
      we have $f'(x) = 0$ and for $x \notin M$ we have $f'(x) = \sup_n s_n''(x) < \infty$ by
       the definition of $M$). Let, for $n \in \N$, $N_n$ be a $\mu$-null set such that $s_n' = s_n$ on
        $L \setminus N_n$, and let $N = \bigcup_n N_n$. Then $N$ is $\mu$-null and for $x \notin N$ 
        we have $f(x) = \sup_n s_n(x) = \sup_n s_n'(x) = f'(x)$. Hence, $f' = f$ $\mu$-almost everywhere and we are done.
\end{proof}

\subsection{Limits of compact spaces and measures}
 
 If $K=K_{\omega_1}\subseteq \{0,1\}^{\omega_1}$ is compact we will consider
 $K_\alpha=\pi_{\alpha, \omega_1}[K_{\omega_1}]$.  If $K$ is clear from the
 context in the rest of the paper, for $\beta < \alpha \leq \omega_1$, we consider $\pi_{\beta,\alpha}$ 
 to be defined only on $K_\alpha$ instead of the whole $\{0,1\}^\alpha$.
 
Given compact  $K=K_{\omega_1}\subseteq \{0,1\}^{\omega_1}$ we will identify $C(K_\beta)$ with a subspace of $C(K_\alpha)$ for $\beta < \alpha \leq \omega_1$
 by the canonical map induced by $\pi_{\beta,\alpha}$. More precisely, there is an injective map 
 $T: C(K_\beta) \rightarrow C(K_\alpha)$ that maps every $f \in C(K_\beta)$ to $T(f) = f \circ \pi_{\beta,\alpha} \in C(K_\alpha)$.
The dual operator $T^*: M(K_\alpha) \rightarrow M(K_\beta)$ to this identification is the ``restriction" 
of the corresponding functional on $C(K_\alpha)$ to $C(K_\beta)$, i.e., 
$T^* (\nu) (f) = \nu (f \circ \pi_{\beta,\alpha})$ for $\nu \in M(K_\alpha)$ and $f \in C(K_\alpha)$. 
We will further omit the operator $T$ in the notation and write $\nu|C(K_\beta)$ instead of 
$T^*(\nu)$.

 \begin{definition}\label{def-limits} Suppose that $\lambda$ is a limit ordinal and that for
 each $\beta<\lambda$ compact spaces  $K_\beta\subseteq\{0,1\}^\beta$ 
 and Radon measures $\mu_\beta$  on $K_\beta$ are such that 
 \begin{enumerate}
 \item  $\pi_{\beta,\alpha}[K_\alpha] = K_\beta$ for each $\beta < \alpha <\lambda$, 
 \item $\mu_\alpha(\pi_{\beta, \alpha}^{-1}[F])=\mu_\beta(F)$
 for each $\beta < \alpha<\lambda$ and $F \subseteq K_\beta$ Borel.
    \end{enumerate}
    Then the compact space $$K_\lambda=\{x\in \{0,1\}^\lambda: \forall \beta<\lambda \ x|\beta\in K_\beta\}$$
    is called the inverse limit of $(K_\beta)_{\beta < \lambda}$ and the unique Radon measure
    $\mu_\lambda$ on $K_\lambda$ satisfying
    $$\mu_\lambda(\pi_{\beta, \lambda}^{-1}[F])=\mu_\beta(F)$$
    for each $\beta < \lambda$ and each Borel $F \subseteq K_\beta$ is called the inverse limit of $(\mu_\beta)_{\beta < \lambda}$.
 \end{definition}

 In the next lemma we clarify the existence of the inverse limit $\mu_\lambda$.
 
 \begin{lemma}\label{limit} Suppose $\lambda$, $(K_\beta)_{\beta<\lambda}$ and $(\mu_\beta)_{\beta<\lambda}$
 are as in Definition \ref{def-limits}. Then the union $\bigcup_{\beta<\lambda}C(K_\beta)$ is norm dense in $C(K_\lambda)$ and there exists the unique 
 Radon measure $\mu_\lambda$ on the inverse limit $K_\lambda$ of $(K_\beta)_{\beta<\lambda}$ 
 which is the inverse limit of $(\mu_\beta)_{\beta<\lambda}$. Moreover 
 $\mu_\lambda|C(K_\beta)=\mu_\beta$ for every $\beta<\lambda$.
 \end{lemma}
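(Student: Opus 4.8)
The plan is to establish the lemma in three stages: density of $\bigcup_{\beta<\lambda}C(K_\beta)$ in $C(K_\lambda)$, existence of $\mu_\lambda$, and the compatibility relation $\mu_\lambda|C(K_\beta)=\mu_\beta$. For the density, I would first observe that $K_\lambda\subseteq\{0,1\}^\lambda$ is compact and that a function of the form $f\circ\pi_{\beta,\lambda}$ with $f\in C(K_\beta)$ depends on the first $\beta$ coordinates only. Since $\lambda$ is a limit ordinal, the clopen subsets of $K_\lambda$ determined by finitely many coordinates $<\lambda$ separate points of $K_\lambda$, and the algebra $\mathcal{A}=\bigcup_{\beta<\lambda}C(K_\beta)$ (which is genuinely an algebra because for $\beta<\gamma<\lambda$ both $C(K_\beta)$ and $C(K_\gamma)$ sit inside $C(K_\gamma)$, so any finite set of functions lands in a common $C(K_\gamma)$) contains the constants and separates points. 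By the Stone--Weierstrass theorem, $\mathcal{A}$ is norm dense in $C(K_\lambda)$.

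For the existence of $\mu_\lambda$, I would define a positive linear functional $\Phi$ on the dense subalgebra $\mathcal{A}$ by setting $\Phi(f\circ\pi_{\beta,\lambda})=\mu_\beta(f\circ\pi_{\beta,\lambda})=\int f\,d\mu_\beta$ for $f\in C(K_\beta)$. The key point is that $\Phi$ is \emph{well defined}: if $f\circ\pi_{\beta,\lambda}=g\circ\pi_{\gamma,\lambda}$ on $K_\lambda$ with $\beta<\gamma$, then the compatibility condition (2) of Definition \ref{def-limits}, which says $\mu_\gamma(\pi_{\beta,\gamma}^{-1}[F])=\mu_\beta(F)$, guarantees $\int f\,d\mu_\beta=\int f\circ\pi_{\beta,\gamma}\,d\mu_\gamma=\int g\,d\mu_\gamma$; this is the compatibility of the measures expressed at the level of integrals of continuous functions. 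Since each $\mu_\beta$ is a (positive, say probability) Radon measure with total mass uniformly bounded, $\Phi$ is bounded by $1$ on $\mathcal{A}$, hence extends uniquely by continuity to a bounded positive linear functional on all of $C(K_\lambda)$. The Riesz representation theorem then yields a unique Radon measure $\mu_\lambda$ on $K_\lambda$ with $\int h\,d\mu_\lambda=\Phi(h)$ for every $h\in C(K_\lambda)$.

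The compatibility identity $\mu_\lambda|C(K_\beta)=\mu_\beta$ is immediate from the construction, since for $f\in C(K_\beta)$ we have $(\mu_\lambda|C(K_\beta))(f)=\mu_\lambda(f\circ\pi_{\beta,\lambda})=\Phi(f\circ\pi_{\beta,\lambda})=\int f\,d\mu_\beta$. It then remains to verify that this continuous-function identity upgrades to the Borel identity $\mu_\lambda(\pi_{\beta,\lambda}^{-1}[F])=\mu_\beta(F)$ for all Borel $F\subseteq K_\beta$ demanded by Definition \ref{def-limits}, and to confirm uniqueness. For the upgrade, the pushforward measure $(\pi_{\beta,\lambda})_*\mu_\lambda$ on $K_\beta$ agrees with $\mu_\beta$ on every continuous function, and since $K_\beta$ is compact and both are Radon measures, agreement on $C(K_\beta)$ forces agreement on all Borel sets by regularity (two Radon measures inducing the same functional on $C(K_\beta)$ coincide). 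Uniqueness of $\mu_\lambda$ follows the same way: any Radon measure satisfying the inverse-limit condition must induce $\Phi$ on the dense set $\mathcal{A}$, hence on all of $C(K_\lambda)$, and is therefore determined.

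The main obstacle I anticipate is the well-definedness of $\Phi$ and the passage between the Borel-level compatibility in condition (2) and the integral-level compatibility needed to run Stone--Weierstrass and Riesz: one must be careful that condition (2) (stated for Borel $F$) is exactly what is needed to see that the two representations of a single function in $\mathcal{A}$ give the same value, and conversely that the resulting $\mu_\lambda$ satisfies the Borel condition and not merely the weaker continuous-function condition. Everything else---the Stone--Weierstrass application and the uniqueness argument---is routine once this compatibility bookkeeping is handled cleanly.
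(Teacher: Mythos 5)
Your proof is correct and takes essentially the same route as the paper's: Stone--Weierstrass density of $\bigcup_{\beta<\lambda}C(K_\beta)$ in $C(K_\lambda)$, extension of the compatible norm-one positive functional from this dense subalgebra to all of $C(K_\lambda)$, and the observation that two Radon measures on $K_\beta$ inducing the same functional on $C(K_\beta)$ coincide, which both upgrades the continuous-function identity to the Borel-level inverse-limit condition and yields uniqueness. Your explicit check that the functional is well defined on the union (via condition (2) of Definition \ref{def-limits}) is a detail the paper leaves implicit in writing $\mu_\lambda'=\bigcup_{\beta<\lambda}\mu_\beta$, but it is the same argument.
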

 \begin{proof}
    We can consider $C(K_\beta)$, for 
    $\beta < \lambda$, as an increasing sequence of subspaces of $C(K_\lambda)$, whose union
     $\bigcup_{\beta<\lambda}C(K_\beta)$ is norm dense in $C(K_\lambda)$
      by the Stone-Weierstrass theorem. Indeed, it is a subalgebra of $C(K_\lambda)$
       (as every finite algebraic combination of $f_i \in C(K_{\alpha_i})$, $i=1\dots,n$, 
       is in $C(K_{\max\{\alpha_i; i \leq n\}})$), it separates points of $K_\lambda$ 
       (as for every $x \neq y \in K_\lambda$ there is $\beta < \lambda$ such 
       that $x(\beta) \neq y(\beta)$, and hence there is some $f \in C(K_\beta)$ 
       which separates $x|\beta$ and $y|\beta$, and thus $f \circ \pi_{\beta,\lambda}$ 
       separates $x$ and $y$), and it contains constant functions (as $1_{K_\lambda} = 1_{K_1} \circ \pi_{1,\lambda}$).
    The mapping $\mu_\lambda'=\bigcup_{\beta<\lambda}\mu_\beta$ 
    is then a norm one positive functional on $\bigcup_{\beta<\lambda}C(K_\beta)$, 
    and so by the Hahn-Banach theorem and density it uniquely extends to a norm one 
    positive functional $\mu_\lambda$ on $C(K_\lambda)$. 
    
    Further, we have $\mu_\lambda|C(K_\beta)=\mu_\beta$ for every $\beta<\lambda$.
    As two Radon measures on $K_\beta$ defining the same
    functional on $C(K_\beta)$  must be equal,  we conclude that  $\mu_\lambda(\pi_{\beta, \lambda}^{-1}[F])=\mu_\beta(F)$
 for each $\beta < \lambda$ and each Borel $F \subseteq K_\beta$.
    
\end{proof}

\section{General construction and its Radon measures}

In this Section we define the general steps of the our transfinite construction (Definition \ref{def-general})
of a compact $K$ and a measure $\mu$
up to some parameters $(F_\alpha, \phi_\alpha)_{\omega \leq \alpha < \omega_1}$. This follows the general
scheme used in the constructions of Haydon, Kunen or Talagrand mentioned in the Introduction.
We show that in this case the measure $\mu$ is always of uncountable Maharam type (Lemma \ref{lemma-unc-type}).
Moreover, if the construction satisfies Haydon's condition (Definition \ref{haydon-cond})
then as in \cite{haydon-main} all Radon measures on $K$ singular with respect to $\mu$ are determined
at some initial countable stage of the construction (Proposition \ref{singular-collapses}). This will be crucial for us 
and will allow us to employ
our successor step from Section 5 in the final construction in Section 6.

\begin{definition}\label{def-general} Let $\delta$ be an infinite ordinal.
We say that a transfinite sequence $(K_\alpha, \mu_\alpha)_{\alpha\leq \delta}$ is
determined by a parameters $(F_\alpha, \phi_\alpha)_{\omega \leq \alpha<\delta}$ if
the following conditions hold:

\begin{enumerate}[label=(C\arabic*)]
    \item For $\alpha < \omega$, $K_\alpha = \{0,1\}^\alpha$ and $\mu_\alpha$ is the product measure on $K_\alpha$.
   For limit $\lambda \leq\delta$ the space 
    $K_\lambda$ is the inverse limit of $(K_\beta)_{\beta<\lambda}$
   and  the measure 
    $\mu_\lambda$ is the inverse limit of  $(\mu_\beta)_{\beta<\lambda}$ as in Definition \ref{def-limits}. 
      \item For $\beta < \alpha \leq \delta$, $\pi_{\beta,\alpha}[K_\alpha] = K_\beta$.
    \item For $\beta < \alpha \leq \delta$ and $F \subseteq K_\beta$ Borel, 
    $\mu_\alpha(\pi_{\beta, \alpha}^{-1}[F])=\mu_\beta(F)$.
      \item For $\omega \leq \alpha < \delta$
      the set $F_\alpha$ is a closed nowhere dense subset of $K_\alpha$ with $\mu_\alpha(F_\alpha) 
    \geq \frac{3}{4}$. 
    \item For $\omega \leq \alpha < \delta$ the function $\phi_\alpha: K_\alpha \setminus F_\alpha \rightarrow \{0,1\}$ 
    is continuous.
    \item For $\omega \leq \alpha < \delta$ we have  $$K_{\alpha+1}=
    \{(x, \phi_\alpha(x)): x\in K_\alpha\setminus F_\alpha\}\cup (F_\alpha\times\{0,1\}).$$ 
    \item For $\omega \leq \alpha < \delta$ and every Borel $X\subseteq K_{\alpha+1}$ we have
        \begin{align*}
            \mu_{\alpha+1}(X)=&\mu_\alpha(\pi_{\alpha, \alpha+1}[X\setminus (F_\alpha\times\{0,1\})]) + \\
            &{1\over 2}\mu_\alpha(\pi_{\alpha, \alpha+1}[X\cap(F_\alpha\times\{0\})])+
            {1\over 2}\mu_\alpha(\pi_{\alpha, \alpha+1}[X\cap(F_\alpha\times\{1\})]).
        \end{align*}
\end{enumerate}
When the transfinite sequence is clear from the context we will write $K$ and $\mu$ for $K_\delta$ and $\mu_\delta$ respectively. 
\end{definition}

Note that conditions (C2) and (C3) from the above definition already follow from condition (C1). We decided to include them in the definition for later reference.

To see that the formula in condition (C7) is well defined we need to check that for every Borel $X \subseteq K_{\alpha+1}$ the sets $\pi_{\alpha, \alpha+1}[X\setminus (F_\alpha\times\{0,1\})]$, $\pi_{\alpha, \alpha+1}[X\cap(F_\alpha\times\{0\})]$ and $\pi_{\alpha, \alpha+1}[X\cap(F_\alpha\times\{1\})]$ are Borel in $K_\alpha$. But this is true as any such $X$ equals $(X_0 \times \{0\}) \cup (X_1 \times \{1\})$ for some Borel subsets $X_0$, $X_1$ of $K_\alpha$. Then our sets are, respectively, $(X_0 \cup X_1) \setminus F_\alpha$, $X_0 \cap F_\alpha$ and $X_1 \cap F_\alpha$, which are all Borel.

\begin{definition} \label{def-ext}
    The space $K_{\alpha+1}$ defined by (C6) and the measure $\mu_{\alpha+1}$ defined by (C7) will
     be called extensions of $K_\alpha$ and $\mu_\alpha$ by $\phi_\alpha$. 
\end{definition}

Note that conditions (C5) and (C6) ensure that the space $K_{\alpha+1}$ is compact.
The conditions (C6) and (C7) thus determine the construction of $K_{\alpha+1}$
 and $\mu_{\alpha+1}$ from $K_\alpha$ and $\mu_\alpha$ and imply that the 
 measure $\mu_{\alpha+1}$ is positive, provided that $\mu_\alpha$ is. 
 Our construction will be of length $\delta=\omega_1$ and the properties of the final compact space $K=K_{\omega_1}$ will 
 depend on the particular choices of $F_\alpha$ and $\phi_\alpha$, which will be specified later.

    For $\alpha < \omega_1$ the space $K_\alpha$ is metrizable, and hence we do not need to
     distinguish between Borel and Baire sets in $K_\alpha$. The distinction will become important
      in the non-metrizable space $K = K_{\omega_1}$.

\begin{lemma}\label{lemma-unc-type} 
    Suppose that a transfinite sequence $(K_\alpha, \mu_\alpha)_{\alpha\leq \omega_1}$ is determined by parameters $(F_\alpha, \phi_\alpha)_{\omega \leq \alpha<\omega_1}$. Then the measures $\mu_\alpha$ for $\omega\leq\alpha\leq\omega_1$ are atomless and  $\mu=\mu_{\omega_1}$ is of uncountable type.
\end{lemma}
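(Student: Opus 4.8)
The plan is to prove the two assertions separately: first that every $\mu_\alpha$ with $\omega\le\alpha\le\omega_1$ is atomless, and then that $\mu=\mu_{\omega_1}$ has uncountable Maharam type by exhibiting an uncountable family in $L_1(\mu)$ that is uniformly separated.

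For atomlessness I would reduce to showing that $\mu_\alpha(\{x\})=0$ for every $x\in K_\alpha$, since it is a standard fact that a finite Radon measure on a compact Hausdorff space is atomless precisely when it vanishes on singletons (an atom of a Radon measure is concentrated, up to a null set, on a single point). This reduction I prove by transfinite induction on $\alpha$. The base case $\alpha=\omega$ is immediate, as $\mu_\omega$ is the product measure and $\mu_\omega(\{x\})=\prod_{n<\omega}\tfrac12=0$. For a successor $\alpha=\gamma+1$ with $\gamma\ge\omega$ I apply (C7) to the singleton $X=\{(x,i)\}\subseteq K_{\gamma+1}$: if $x\notin F_\gamma$ only the first summand survives and $\mu_{\gamma+1}(X)=\mu_\gamma(\{x\})$, while if $x\in F_\gamma$ only one of the halved summands survives and $\mu_{\gamma+1}(X)=\tfrac12\mu_\gamma(\{x\})$; in both cases this is $0$ by the inductive hypothesis. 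For a limit $\lambda$ (which also covers $\alpha=\omega_1$) I use that $\{x\}\subseteq\pi_{\beta,\lambda}^{-1}[\{x|\beta\}]$ for any $\omega\le\beta<\lambda$, so by (C3) $\mu_\lambda(\{x\})\le\mu_\beta(\{x|\beta\})=0$.

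For the uncountable type the key idea is to isolate, at each stage $\alpha$, the ``fresh'' randomness introduced by the even splitting over $F_\alpha$, which by (C4) carries mass at least $\tfrac34$ and is therefore detectable no matter how the glueing function $\phi_\alpha$ behaves. Concretely I would set, for $\omega\le\alpha<\omega_1$,
\[
G_\alpha=\{x\in K:\ x|\alpha\in F_\alpha\ \text{and}\ x(\alpha)=1\},
\]
a Borel (indeed Baire) subset of $K$ depending only on the coordinates $\le\alpha$. Reducing the computation to the finite stage $\alpha+1$ via (C3) and applying (C7) to $F_\alpha\times\{1\}$ gives $\mu(G_\alpha)=\tfrac12\mu_\alpha(F_\alpha)\ge\tfrac38$. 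The crucial estimate is that distinct $G_\alpha$ are far apart: for $\alpha<\beta$ the set $G_\alpha\cap G_\beta$ depends only on the coordinates $\le\beta$, so reducing to stage $\beta+1$ and applying (C7) — where the constraint $x|\beta\in F_\beta$ forces the even split in the coordinate $\beta$ — yields $\mu(G_\alpha\cap G_\beta)=\tfrac12\,\mu\big(G_\alpha\cap\pi_{\beta,\omega_1}^{-1}[F_\beta]\big)\le\tfrac12\mu(G_\alpha)$. Hence $\mu(G_\alpha\triangle G_\beta)=\mu(G_\alpha)+\mu(G_\beta)-2\mu(G_\alpha\cap G_\beta)\ge\mu(G_\beta)\ge\tfrac38$.

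Finally I would conclude as follows: since $\|\chi_{G_\alpha}-\chi_{G_\beta}\|_{L_1(\mu)}=\mu(G_\alpha\triangle G_\beta)\ge\tfrac38$ for all $\omega\le\alpha<\beta<\omega_1$, the family $\{\chi_{G_\alpha}:\omega\le\alpha<\omega_1\}$ is an uncountable $\tfrac38$-separated subset of $L_1(\mu)$, so $L_1(\mu)$ is nonseparable and $\mu$ has uncountable Maharam type. I expect the main obstacle to be making rigorous the repeated ``integrate out the last coordinate'' step: one must justify that a Borel set depending only on the coordinates $\le\gamma$ is the $\pi_{\gamma+1,\omega_1}$-preimage of a Borel subset of $K_{\gamma+1}$ whose $\mu$-measure equals its $\mu_{\gamma+1}$-measure (by (C3)), and then that the conditional probability of $\{x(\beta)=1\}$ on the part of $G_\alpha$ lying over $F_\beta$ is exactly $\tfrac12$ — both of which are precisely what conditions (C3) and (C7) encode.
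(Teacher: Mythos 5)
Your proposal is correct and takes essentially the same route as the paper: you use the same uncountable family $\pi_{\alpha+1,\omega_1}^{-1}[F_\alpha\times\{l\}]$ of sets of $\mu$-measure at least $\frac38$, estimate pairwise intersections via (C3) and (C7), and conclude nonseparability of $L_1(\mu)$ from a uniformly separated family of characteristic functions. The only cosmetic differences are that the paper proves atomlessness in one step by pulling each singleton back to stage $\omega$ via (C3) (your transfinite induction is sound but unnecessary), and it bounds $\mu(A_\alpha\cap A_\beta)\leq\frac14$ by a chain of inequalities where you use the exact halving identity $\mu(G_\alpha\cap G_\beta)=\frac12\,\mu\bigl(G_\alpha\cap\pi_{\beta,\omega_1}^{-1}[F_\beta]\bigr)$, giving a slightly better separation constant.
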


\begin{proof}
  By the first parts of condition (C1)  the measure $\mu_\omega$ is atomless. If $\omega < \alpha \leq \omega_1$ 
    and $x \in K_\alpha$ we have  that
  $0 \leq \mu_\alpha(\{x\}) \leq \mu_\alpha (\pi_{\omega,\alpha}^{-1}[\{\pi_{\omega,\alpha}(x)\}])
      $ which by (C3) is equal to
      $\mu_\omega(\{\pi_{\omega,\alpha}(x)\}) = 0$ as required for the atomlessness of $\mu_{\alpha}$.

    To prove that $\mu=\mu_{\omega_1}$ is of uncountable type we will construct an 
    uncountable family $(A_\alpha)_{\alpha < \omega_1}$ of  closed Baire
    subsets of $K$, such that $\mu(A_\alpha) \geq \frac{3}{8}$ for $\alpha < \omega_1$ 
    and $\mu(A_\alpha \cap A_\beta) \leq \frac{1}{4}$ for $\alpha < \beta < \omega_1$. 
    Then
    \begin{align*}
        \|\chi_{A_\alpha} - \chi_{A_\beta}\|_{L_1(\mu)} = \mu(A_\alpha \triangle A_\beta) = 
        \mu(A_\alpha) + \mu (A_\beta) - 2 \mu (A_\alpha \cap A_\beta) \geq \frac{6}{8} - \frac{2}{4} = \frac{1}{4}
    \end{align*}
    for $\beta < \alpha < \omega_1$, and $L_1(\mu)$ is not separable which would
    prove that $\mu$ is  of uncountable type.

    Let $\alpha < \omega_1$ and set $A_\alpha = \pi_{\alpha+1,\omega_1}^{-1}[F_\alpha \times \{0\}]$
     for $\alpha < \omega_1$. We have
    \begin{align*}
        \mu(A_\alpha) = \mu(\pi_{\alpha+1,\omega_1}^{-1}[F_\alpha \times \{0\}]) 
        \overset{\text{(C3)}}{=} \mu_{\alpha+1} [F_\alpha \times \{0\}]\overset{\text{(C7)}}{=}
         \frac{1}{2} \mu_\alpha(F_\alpha) \overset{\text{(C4)}}{\geq} \frac{3}{8}. 
    \end{align*}
    Now, if $\beta < \alpha < \omega_1$,
    \begin{align*}
        \mu(A_\alpha \cap A_\beta) &= \mu \left(\pi_{\alpha+1,\omega_1}^{-1}[F_\alpha \times \{0\}] \cap 
        \pi_{\beta+1,\omega_1}^{-1}[F_\beta \times \{0\}] \right) \\
        &= \mu \left(\pi_{\alpha+1,\omega_1}^{-1} \left[(F_\alpha \times \{0\}) 
        \cap \pi_{\beta+1,\alpha+1}^{-1}[F_\beta \times \{0\}] \right]\right) \\
        &\overset{\text{(C3)}}{=} \mu_{\alpha+1}\left((F_\alpha \times \{0\}) 
        \cap \pi_{\beta+1,\alpha+1}^{-1}[F_\beta \times \{0\}]\right) \\
        &\leq \mu_{\alpha+1}\left((F_\alpha \times \{0\}) \cap (\pi_{\beta+1,\alpha}^{-1}[F_\beta \times \{0\}]
         \times \{0,1\})\right) \\
        &= \mu_{\alpha+1} \left((\pi_{\beta+1,\alpha}^{-1}[F_\beta \times \{0\}] \cap F_\alpha) \times \{0\}\right) \\
        &\overset{\text{(C7)}}{=} \frac{1}{2} \mu_{\alpha} \left(\pi_{\beta+1,\alpha}^{-1}[F_\beta \times \{0\}
        ] \cap F_\alpha\right) \\
        &\leq \frac{1}{2} \mu_{\alpha} \left(\pi_{\beta+1,\alpha}^{-1}[F_\beta \times \{0\}] \right) \\
        &\overset{\text{(C3)}}{=} \frac{1}{2} \mu_{\beta+1} (F_\beta \times \{0\}) \\
        &\overset{\text{(C7)}}{\leq} \frac{1}{2} \left( \frac{1}{2} \mu_{\beta} (F_\beta) \right)
        \leq \frac{1}{4}.
    \end{align*}
\end{proof}

\begin{definition}\label{haydon-cond} Suppose that a transfinite sequence $(K_\alpha, \mu_\alpha)_{\alpha\leq \omega_1}$ is
determined by parameters $(F_\alpha, \phi_\alpha)_{\omega \leq \alpha<\omega_1}$ 
as in Definition \ref{def-general}.  We say that it satisfies Haydon's condition if the following holds:
For any $\beta <\omega_1$ and any Borel $E\subseteq K_\beta$ such that 
    $\mu_\beta(E)=0$ there is $\beta\leq\alpha<\omega_1$ such that for every $\alpha\leq\gamma<\omega_1$
    we have
    $$\pi_{\beta, \gamma}^{-1}[E]\cap F_\gamma=\emptyset.$$

\end{definition}

Haydon's condition  thus says that Borel sets of $\mu_\beta$-measure $0$ for $\beta<\omega_1$
will eventually stop being split.
The following lemmas culminating in Proposition \ref{singular-collapses}  explain the importance of Haydon's condition.

\begin{lemma} \label{lemma-injective}
Suppose that a transfinite sequence $(K_\alpha, \mu_\alpha)_{\alpha\leq \omega_1}$ is
determined by parameters $(F_\alpha, \phi_\alpha)_{\omega \leq \alpha<\omega_1}$. 
    Suppose that $\alpha < \omega_1$ and $E \subseteq K_\alpha$ is Borel and satisfies 
    $\pi_{\alpha, \gamma}^{-1}[E]\cap F_\gamma=\emptyset$ for each $\alpha \leq \gamma < \omega_1$ . 
    Then for every $\alpha \leq \gamma \leq \omega_1$ the mapping 
    $\pi_{\alpha,\gamma} | \pi_{\alpha, \gamma}^{-1}[E]$ is injective.
\end{lemma}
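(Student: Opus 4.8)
The plan is to prove the statement by transfinite induction on $\gamma\in[\alpha,\omega_1]$. The key structural observation is that the hypothesis $\pi_{\alpha,\gamma}^{-1}[E]\cap F_\gamma=\emptyset$ says precisely that no point lying over $E$ is ever split when passing from stage $\gamma$ to stage $\gamma+1$: by (C6) every $x\in K_\gamma\setminus F_\gamma$ has a unique $\pi_{\gamma,\gamma+1}$-preimage $(x,\phi_\gamma(x))$ in $K_{\gamma+1}$, while the points of $F_\gamma$ are the only ones admitting two preimages. So along the whole tower each point of $E$ retains a single preimage, and this is what forces injectivity.

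The base case $\gamma=\alpha$ is immediate, since $\pi_{\alpha,\alpha}$ is the identity on $K_\alpha$ and hence injective on $\pi_{\alpha,\alpha}^{-1}[E]=E$. For the successor case I would assume the claim for some $\beta$ with $\alpha\leq\beta<\omega_1$ and treat $\gamma=\beta+1$, using the factorization $\pi_{\alpha,\beta+1}=\pi_{\alpha,\beta}\circ\pi_{\beta,\beta+1}$, which gives $\pi_{\alpha,\beta+1}^{-1}[E]=\pi_{\beta,\beta+1}^{-1}[E']$ for $E':=\pi_{\alpha,\beta}^{-1}[E]$. Applying the hypothesis with $\gamma=\beta$ yields $E'\cap F_\beta=\emptyset$, i.e. $E'\subseteq K_\beta\setminus F_\beta$, so by (C6) the restriction $\pi_{\beta,\beta+1}\,|\,\pi_{\beta,\beta+1}^{-1}[E']$ is a bijection onto $E'$, in particular injective. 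Composing it with $\pi_{\alpha,\beta}\,|\,E'$, which is injective by the inductive hypothesis, shows that $\pi_{\alpha,\beta+1}\,|\,\pi_{\alpha,\beta+1}^{-1}[E]$ is injective.

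For the limit case let $\gamma>\alpha$ be a limit ordinal and assume the claim for all $\beta\in[\alpha,\gamma)$. Given $z,z'\in\pi_{\alpha,\gamma}^{-1}[E]$ with $\pi_{\alpha,\gamma}(z)=\pi_{\alpha,\gamma}(z')$, I would restrict to each stage $\beta$ with $\alpha\leq\beta<\gamma$: then $z|\beta,z'|\beta\in\pi_{\alpha,\beta}^{-1}[E]$ and $\pi_{\alpha,\beta}(z|\beta)=\pi_{\alpha,\beta}(z'|\beta)$, so the inductive hypothesis gives $z|\beta=z'|\beta$. Since $\gamma$ is a limit ordinal exceeding $\alpha$, every coordinate $\xi<\gamma$ satisfies $\xi<\beta$ for some $\beta\in[\alpha,\gamma)$, whence $z(\xi)=z'(\xi)$; thus $z=z'$ as elements of $\{0,1\}^\gamma$. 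This closes the induction and yields the conclusion also at $\gamma=\omega_1$.

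I do not expect a serious obstacle: the argument is purely combinatorial, and the Borel hypothesis on $E$ plays no role in the injectivity itself (it is only needed to make the relevant preimages measurable for later use). The one place demanding care is the limit stage, where one must exploit the inverse-limit description of $K_\gamma$ from Definition \ref{def-limits} to reduce the agreement of $z$ and $z'$ on all of $\gamma$ to their agreement at each strictly smaller stage, where the inductive hypothesis is available.
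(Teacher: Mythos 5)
Your proposal is correct and follows essentially the same route as the paper: transfinite induction on $\gamma$, with the successor step using (C6) to see that points over $E$ lie outside $F_\beta$ and hence have unique preimages, composed with the inductively injective $\pi_{\alpha,\beta}$. The only difference is that you spell out the limit step (agreement of $z$ and $z'$ coordinatewise via the inverse-limit description), which the paper dismisses as clear, and you correctly observe that the Borel assumption on $E$ is irrelevant to injectivity itself.
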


\begin{proof}
    This follows by induction over $\gamma$. It clearly holds for $\gamma = \alpha$. 
    Suppose now that $\gamma = \beta + 1$ and the claim holds for $\beta$. 
    By the induction assumption, $\pi_{\alpha,\beta} | \pi_{\alpha, \beta}^{-1}[E]$ is injective. 
    Further, by the assumptions of the lemma $\pi_{\alpha, \beta}^{-1}[E]\cap F_\beta=\emptyset$. 
    As by (C6) the mapping $\pi_{\beta,\gamma}$ is injective on $\{(x,\phi_\beta(x)):x \in K_\beta \setminus F_\beta\}$, 
    the composition $\pi_{\alpha,\gamma} | \pi_{\alpha, \gamma}^{-1}[E] = 
    (\pi_{\alpha,\beta} \circ \pi_{\beta,\gamma}) | \pi_{\alpha, \gamma}^{-1}[E]$ is injective as well. 
    The induction step for limit ordinals is clear.
\end{proof}

\begin{definition}\label{def-nu-beta}
Suppose that a transfinite sequence $(K_\alpha, \mu_\alpha)_{\alpha\leq \omega_1}$ is
determined by parameters $(F_\alpha, \phi_\alpha)_{\omega \leq \alpha<\omega_1}$. 
    Suppose that $\alpha \leq \beta \leq \omega_1$, $\nu\in B_{C(K_\alpha)^*}$ is positive such that $\nu\perp\mu_\alpha$ and suppose
    a Borel $E\subseteq K_\alpha$ satisfies
    \begin{enumerate}
        \item $\nu(E)=\|\nu\|_{C(K_\alpha)^*}$,
        \item $\pi_{\alpha, \beta}|(\pi^{-1}_{\alpha, \beta}[E])$ is injective.
    \end{enumerate}
    Then by $\nu^\beta$ we mean a real valued function defined on Borel subsets of $K_\beta$ defined by
    $\nu^\beta(X)=\nu(\pi_{\alpha, \beta}[X \cap \pi_{\alpha,\beta}^{-1}[E]])$
    for each Borel $X\subseteq K_\beta$.
\end{definition}

We should note that $\nu^\beta$ is well defined. Indeed, for any Borel subset $X$ of $K_\beta$ the set $\pi_{\alpha, \beta}[X \cap \pi_{\alpha,\beta}^{-1}[E]]$ is Borel in $K_\alpha$ by the Lusin-Souslin theorem (see e.g. Theorem 15.1. of \cite{kechris}) as $\pi_{\alpha,\beta}$ is continuous and injective on the set $\pi_{\alpha,\beta}^{-1}[E]$, hence $\nu^\beta(X)$ is well defined. Let us also mention that if $\beta = \alpha + 1$, then $\nu^\beta(X) = \nu(\pi_{\alpha,\beta}[X])$ for any Borel $X \subseteq K_\beta$. In this case, $\pi_{\alpha,\beta}[X]$ is automatically Borel (as $X = (X_0 \times \{0\}) \cup (X_1 \cup \{1\})$ for some Borel subsets $X_0$, $X_1$ of $K_\alpha$, thus $\pi_{\alpha,\beta}[X] = X_0 \cup X_1$ is Borel). Further, $\nu(\pi_{\alpha,\beta}[X]) = \nu(\pi_{\alpha, \beta}[X \cap \pi_{\alpha,\beta}^{-1}[E]])$ by condition \textit{(1)}.

\begin{lemma}\label{unique-extension}
    Suppose that $(K_\alpha, \mu_\alpha)_{\alpha\leq \omega_1}$,
    $(F_\alpha, \phi_\alpha)_{\omega \leq \alpha<\omega_1}$,  $\nu, E, \alpha$ and $\beta$ are as in 
    Definition \ref{def-nu-beta}. Then $\nu^\beta$ is the unique positive Radon measure on $K_\beta$
    such that $\nu^\beta|C(K_\alpha)=\nu$.
\end{lemma}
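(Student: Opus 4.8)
The plan is to avoid verifying the regularity of $\nu^\beta$ directly---which would be delicate on the possibly non-metrizable $K_\beta$ (when $\beta=\omega_1$)---and instead to produce \emph{some} positive Radon measure $\rho$ on $K_\beta$ with $\rho|C(K_\alpha)=\nu$ by soft functional-analytic means, and then to prove that any such $\rho$ must agree with $\nu^\beta$ on every Borel set. This single computation yields at once that $\nu^\beta$ is a positive Radon measure, that it extends $\nu$, and that it is the only such extension.

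First I would obtain an extension. Viewing $\nu$ as a positive functional on the subspace $C(K_\alpha)\subseteq C(K_\beta)$, the Hahn--Banach theorem provides a norm-preserving extension $\rho\in C(K_\beta)^*$. Since $1_{K_\beta}=1_{K_\alpha}\circ\pi_{\alpha,\beta}$ is identified with an element of $C(K_\alpha)$ and $\nu\geq 0$, we get $\rho(1_{K_\beta})=\nu(1_{K_\alpha})=\|\nu\|_{C(K_\alpha)^*}=\|\rho\|$, and the equality $\|\rho\|=\rho(1_{K_\beta})$ forces $\rho$ to be positive; by the Riesz representation theorem $\rho$ is a positive Radon measure on $K_\beta$. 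Pushing $\rho$ forward along the continuous map $\pi_{\alpha,\beta}$ gives a Radon measure on $K_\alpha$ representing the functional $\nu$, so by uniqueness in the Riesz theorem $\rho(\pi_{\alpha,\beta}^{-1}[B])=\nu(B)$ for every Borel $B\subseteq K_\alpha$. In particular, writing $E'=\pi_{\alpha,\beta}^{-1}[E]$, condition (1) together with $\|\rho\|=\|\nu\|$ gives $\rho(E')=\nu(E)=\|\rho\|=\rho(K_\beta)$, so $\rho$ is concentrated on $E'$.

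The crux is then the following identity, valid for \emph{every} positive Radon measure $\rho$ on $K_\beta$ with $\rho|C(K_\alpha)=\nu$. Fix a Borel $X\subseteq K_\beta$ and put $B=\pi_{\alpha,\beta}[X\cap E']$, which is Borel in $K_\alpha$ by the Lusin--Souslin theorem (as already noted after Definition \ref{def-nu-beta}). Here I would invoke the injectivity hypothesis (2): because $\pi_{\alpha,\beta}$ is injective on $E'$, a short set-theoretic check gives $\pi_{\alpha,\beta}^{-1}[B]\cap E'=X\cap E'$. Combining this with the concentration of $\rho$ on $E'$ and the pushforward identity of the previous step, $\rho(X)=\rho(X\cap E')=\rho(\pi_{\alpha,\beta}^{-1}[B]\cap E')=\rho(\pi_{\alpha,\beta}^{-1}[B])=\nu(B)=\nu^\beta(X)$. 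Thus every positive Radon extension of $\nu$ agrees with $\nu^\beta$ on all Borel sets; applied to the $\rho$ constructed above this shows that $\nu^\beta$ is itself a positive Radon measure with $\nu^\beta|C(K_\alpha)=\nu$, and applied to an arbitrary extension it yields uniqueness.

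The step I expect to be the main obstacle is precisely the passage from ``$\nu^\beta$ is an abstractly defined set function'' to ``$\nu^\beta$ is a Radon measure'': a direct verification of inner regularity runs into trouble because the Borel section $(\pi_{\alpha,\beta}|E')^{-1}$ need not be continuous on a set of large measure once $K_\beta$ is non-metrizable. The device above circumvents this entirely by importing Radon-ness from the Hahn--Banach/Riesz extension and transferring it through the injectivity-driven identity $\pi_{\alpha,\beta}^{-1}[B]\cap E'=X\cap E'$, so that essentially no regularity argument is needed beyond the standard Riesz correspondence.
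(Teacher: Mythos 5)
Your proof is correct, and it takes a genuinely different route from the paper's. The paper fixes an arbitrary positive extension $\rho$ and computes it on compact sets: for compact $X\subseteq\pi_{\alpha,\beta}^{-1}[E]$ it approximates $\chi_{\pi_{\alpha,\beta}[X]}$ pointwise by continuous $h_n$ (using metrizability of $K_\alpha$, so that $\pi_{\alpha,\beta}[X]$ is a closed $G_\delta$), notes that injectivity on $\pi_{\alpha,\beta}^{-1}[E]$ forces $h_n\circ\pi_{\alpha,\beta}\to\chi_X$ pointwise, and applies dominated convergence; a parallel argument kills compact subsets of $\pi_{\alpha,\beta}^{-1}[K_\alpha\setminus E]$; inner regularity of $\rho$ then upgrades these two facts to all Borel sets. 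You replace all of this by the pushforward identity $(\pi_{\alpha,\beta})_*\rho=\nu$ (Riesz uniqueness, valid since Borel measures on the metrizable $K_\alpha$ are automatically Radon, the case $\alpha=\omega_1$ being trivial), the concentration $\rho(K_\beta\setminus E')=0$ extracted from hypothesis \textit{(1)}, and the purely set-theoretic consequence of hypothesis \textit{(2)} that $\pi_{\alpha,\beta}^{-1}\left[\pi_{\alpha,\beta}[X\cap E']\right]\cap E'=X\cap E'$ --- no convergence arguments, no appeal to inner regularity of $\rho$, and essentially no use of metrizability. Your derivations of the pushforward identity and of concentration use only positivity and $\rho|C(K_\alpha)=\nu$, so they do apply to an arbitrary extension as your uniqueness step requires (it would be worth saying this explicitly, since you first introduce them for the Hahn--Banach extension). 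A further small gain: you explicitly supply the existence half of the lemma (Hahn--Banach plus positivity from $\|\rho\|=\rho(1_{K_\beta})$), thereby showing that the set function $\nu^\beta$ really is a Radon measure, a point the paper's proof leaves implicit. The Borel measurability of $\pi_{\alpha,\beta}[X\cap E']$ you cite from the remark following Definition \ref{def-nu-beta}, exactly as the paper does, so no new obligation arises there.
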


\begin{proof}
    Let $\rho \in M(K_\beta)$ be positive and satisfy $\rho | C(K_\alpha) = \nu$. Recall that $\rho |C(K_\alpha) = \nu$ if and only if for all 
    $f \in C(K_\alpha)$ it is true that $\rho(f \circ \pi_{\alpha,\beta}) = \nu(f)$. We will prove the following two points:
    \begin{enumerate}[label=(\roman*)]
        \item Every $X \subseteq \pi_{\alpha,\beta}^{-1}[E]$ compact satisfies $\rho(X) = \nu(\pi_{\alpha,\beta}[X])$.
        \item Every $X \subseteq \pi_{\alpha,\beta}^{-1}[K_\alpha \setminus E]$ compact satisfies $\rho(X) = 0$. 
    \end{enumerate}
    The fact that $\rho(X)=\nu(\pi_{\alpha, \beta}[X \cap \pi_{\alpha,\beta}^{-1}[E]])$ for any $X\subseteq K_\beta$ Borel will then follow by regularity.
    
    We begin with (i) -- let $X \subseteq \pi_{\alpha,\beta}^{-1}[E]$ be compact. By metrizability of 
    $K_\alpha$ there are for each $n \in \mathbb{N}$ continuous functions $h_n \in S_{C(K_\alpha)}$ 
    such that $h_n$ pointwise converge to the characteristic function of 
    $\pi_{\alpha,\beta}[X]$ (as $\pi_{\alpha,\beta}[X]$ is $G_\delta$ by metrizability of $K_\alpha$). 
    But then also $h_n \circ \pi_{\alpha,\beta}$ pointwise converge to the characteristic function of 
    $X$ (as $\pi_{\alpha,\beta}|X$ is injective). Hence, by dominated convergence theorem,
    \begin{align*}
        \rho(X) = \lim \rho(h_n \circ \pi_{\alpha,\beta}) = \lim \nu(h_n) = \nu(\pi_{\alpha,\beta}[X]).
    \end{align*}
    Now we prove (ii). Let $X \subseteq \pi_{\alpha,\beta}^{-1}[K_\alpha \setminus E]$
     be compact and take for $n \in \mathbb{N}$ nonnegative functions $h_n \in S_{C(K_\alpha)}$ 
     such that $h_n|\pi_{\alpha,\beta}[X]=1$ and $h_n$ pointwise converge to the characteristic
      function of $\pi_{\alpha,\beta}[X]$. Then by positivity and dominated convergence theorem we have for each $n \in \mathbb{N}$
    \begin{align*}
        0 \leq \rho(X) \leq 
        \rho(h_n \circ \pi_{\alpha,\beta}) = \nu(h_n) \rightarrow \nu(\pi_{\alpha,\beta}[X]) = 0
    \end{align*}
    as $\pi_{\alpha,\beta}[X] \cap E = \emptyset$.
\end{proof}

In the following proposition we prove an analogue of a property of the main example of \cite{haydon-main} described in Theorem 3.1  of \cite{haydon-main}. Namely, 
we show that every positive Radon measure $\nu$ on $K$ which is singular with respect to 
$\mu$ is determined by its restriction to some countable ordinal. More precisely:

\begin{proposition}\label{singular-collapses}
Suppose that a transfinite sequence $(K_\alpha, \mu_\alpha)_{\alpha\leq \omega_1}$ is
determined by parameters $(F_\alpha, \phi_\alpha)_{\omega \leq \alpha<\omega_1}$ and satisfies
Haydon's condition.
    Let
    $\nu$ be a positive Radon measure on $K$ which is singular with respect to $\mu$.
    Then there is $\beta<\omega_1$ such that $\nu|C(K_{\beta'})=(\nu|C(K_\beta))^{\beta'}$
    for every $\beta\leq\beta'\leq\omega_1$.
\end{proposition}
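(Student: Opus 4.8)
The plan is to pin down the countable ordinal $\beta$ by combining the fact that a measure singular to $\mu$ concentrates on a Baire set (hence on a set depending on countably many coordinates) with Haydon's condition, and then to conclude via the uniqueness assertion of Lemma \ref{unique-extension}. Throughout we may assume $\|\nu\| \leq 1$: all the hypotheses and the conclusion are invariant under multiplying $\nu$ by a positive scalar, and the assignment $\rho \mapsto \rho^{\beta'}$ of Definition \ref{def-nu-beta} is positively homogeneous.

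First I would exploit singularity to produce a set depending on a countable ordinal. Since $\nu \perp \mu$ and both are positive, Corollary \ref{corollary-singular-Baire} yields a Baire set $E \subseteq K$ with $\mu(E) = \nu(K \setminus E) = 0$; in particular $\nu(E) = \|\nu\|$. By Corollary \ref{Baire-set-depends} this Baire set depends on some $\alpha_0 < \omega_1$, so with $E_0 := \pi_{\alpha_0,\omega_1}[E]$ we have $E = \pi_{\alpha_0,\omega_1}^{-1}[E_0]$, where $E_0$ is Borel in the metrizable space $K_{\alpha_0}$. Condition (C3) gives $\mu_{\alpha_0}(E_0) = \mu(E) = 0$, so $E_0$ is a $\mu_{\alpha_0}$-null Borel set and Haydon's condition (Definition \ref{haydon-cond}) applies: there is $\beta \geq \alpha_0$ such that $\pi_{\alpha_0,\gamma}^{-1}[E_0] \cap F_\gamma = \emptyset$ for all $\beta \leq \gamma < \omega_1$. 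This $\beta$ will be the ordinal sought in the statement.

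Next I would transport this back to level $\beta$ and extract injectivity. Put $E_\beta := \pi_{\alpha_0,\beta}^{-1}[E_0] \subseteq K_\beta$, which is Borel; using $\pi_{\beta,\gamma}^{-1}[E_\beta] = \pi_{\alpha_0,\gamma}^{-1}[E_0]$ we obtain $\pi_{\beta,\gamma}^{-1}[E_\beta] \cap F_\gamma = \emptyset$ for all $\beta \leq \gamma < \omega_1$, which is exactly the hypothesis of Lemma \ref{lemma-injective} (with its $\alpha := \beta$ and $E := E_\beta$). Hence $\pi_{\beta,\beta'} | \pi_{\beta,\beta'}^{-1}[E_\beta]$ is injective for every $\beta \leq \beta' \leq \omega_1$. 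Now set $\nu_\beta := \nu | C(K_\beta)$, which is the pushforward of $\nu$ along $\pi_{\beta,\omega_1}$; since $\pi_{\beta,\omega_1}^{-1}[E_\beta] = E$, we get $\nu_\beta(E_\beta) = \nu(E) = \|\nu_\beta\|$, while $\mu_\beta(E_\beta) = \mu_{\alpha_0}(E_0) = 0$ forces $\nu_\beta \perp \mu_\beta$. Thus $\nu_\beta$, $E_\beta$, $\beta$ and any $\beta'$ satisfy all the assumptions of Definition \ref{def-nu-beta}, so $(\nu_\beta)^{\beta'}$ is defined.

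Finally I would invoke uniqueness. Fix $\beta \leq \beta' \leq \omega_1$. Because restrictions compose (as $\pi_{\beta,\omega_1} = \pi_{\beta,\beta'} \circ \pi_{\beta',\omega_1}$), the measure $\nu | C(K_{\beta'})$ is a positive Radon measure on $K_{\beta'}$ whose further restriction to $C(K_\beta)$ equals $\nu_\beta$. By Lemma \ref{unique-extension}, $(\nu_\beta)^{\beta'}$ is the unique positive Radon measure on $K_{\beta'}$ restricting to $\nu_\beta$ on $C(K_\beta)$, whence $\nu | C(K_{\beta'}) = (\nu_\beta)^{\beta'} = (\nu | C(K_\beta))^{\beta'}$, as required. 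I expect the genuine content to lie in the synthesis of the middle paragraphs: realizing that Baireness hands us a single countable coordinate $\alpha_0$ on which the carrier of $\nu$ sits, and that Haydon's condition then halts all further splitting of $E_0$ and thereby delivers exactly the injectivity that Definition \ref{def-nu-beta} and Lemma \ref{unique-extension} demand. The preimage identities and the pushforward computations are routine by comparison.
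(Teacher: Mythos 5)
Your proposal is correct and follows essentially the same route as the paper's own proof: singularity produces a Baire carrier via Corollary \ref{corollary-singular-Baire}, Corollary \ref{Baire-set-depends} pins it to a countable coordinate, Haydon's condition together with Lemma \ref{lemma-injective} supplies the injectivity needed for Definition \ref{def-nu-beta}, and Lemma \ref{unique-extension} closes the argument by uniqueness applied to $(\nu|C(K_{\beta'}))|C(K_\beta)=\nu|C(K_\beta)$. If anything, your explicit separation of the coordinate $\alpha_0$ from the ordinal $\beta$ delivered by Haydon's condition is a slightly cleaner bookkeeping than the paper's compressed wording of that step.
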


\begin{proof}
    Without loss of generality we assume that $\nu(K) = 1$.
    By Corollary \ref{corollary-singular-Baire} 
    there is a Baire set $E'\subseteq K$ such that
    $\mu(E')=0$ and $\nu(E')=1$. Since $E'$ is Baire, by Corollary \ref{Baire-set-depends} there is  $\beta<\omega_1$
    such that $E'=\pi_{\beta, \omega_1}^{-1}[E]$ for some $E\subseteq K_\beta$.
     By (C3) we have that $\mu_\beta(E)=\mu(E')=0$
     and there is
     $\beta < \omega_1$ such that $\pi_{\beta,\omega_1}|E'$ is injective.
    Hence, $\pi_{\beta,\beta'}|(\pi_{\beta',\omega_1}[E'])$ is injective for every
     $\beta\leq\beta'\leq\omega_1$ as well and $(\nu|C(K_\beta))^{\beta'}$ is well defined.
    Fix $\beta\leq\beta'\leq\omega_1$. By Lemma \ref{unique-extension}
     it is enough to show that $(\nu|C(K_{\beta'}))|C(K_{\beta}) = \nu|C(K_{\beta})$. But by definition, for $f \in C(K_{\beta})$,
    \begin{align*}
        \left( (\nu|C(K_{\beta'}))|C(K_{\beta}) \right) (f) &= (\nu|C(K_{\beta'})) (f \circ \pi_{\beta,\beta'}) 
        = \nu (f \circ \pi_{\beta,\beta'} \circ \pi_{\beta',\omega_1}) \\
        &= \nu(f \circ \pi_{\beta,\omega_1}) = (\nu|C(K_\beta))(f).
    \end{align*}
\end{proof}

The following lemma will not be used in the following parts of the paper but it sheds light on the
weak$^*$ topology of $P(K)$.

\begin{lemma} \label{lemma-countable-character}
    Suppose that a transfinite sequence $(K_\alpha, \mu_\alpha)_{\alpha\leq \omega_1}$ is
    determined by parameters $(F_\alpha, \phi_\alpha)_{\omega \leq \alpha<\omega_1}$ and satisfies
    Haydon's condition.
    Suppose that $\nu\in P(K)$ and $\nu\perp\mu$.  Then the character of $\nu$ in $P(K)$ is countable.
\end{lemma}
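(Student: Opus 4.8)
The plan is to realize $\nu$ as a point with a singleton fibre under the continuous restriction map onto a metrizable quotient of $P(K)$, and then to invoke a soft compactness argument.

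First I would record what Proposition \ref{singular-collapses} and its proof provide. Since $\nu\in P(K)$ is positive, $\nu\perp\mu$, and $\nu(K)=1$, there is a countable ordinal $\beta<\omega_1$ and a Borel set $E\subseteq K_\beta$ with $\mu_\beta(E)=0$ and $\nu_\beta(E)=1$, where $\nu_\beta:=\nu|C(K_\beta)$, such that $\pi_{\beta,\omega_1}$ is injective on $E'=\pi_{\beta,\omega_1}^{-1}[E]$ and $\nu=(\nu_\beta)^{\omega_1}$. In particular $\nu_\beta\perp\mu_\beta$ (witnessed by $E$, since $\mu_\beta(E)=\nu_\beta(K_\beta\setminus E)=0$) and $\nu_\beta(E)=\|\nu_\beta\|_{C(K_\beta)^*}=1$, so the hypotheses of Definition \ref{def-nu-beta} hold for $\nu_\beta$ and $E$ with the levels $\beta\le\omega_1$. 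Hence Lemma \ref{unique-extension} applies and tells us that $(\nu_\beta)^{\omega_1}=\nu$ is the \emph{unique} positive Radon measure on $K$ whose restriction to $C(K_\beta)$ equals $\nu_\beta$.

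Now consider the restriction map $j\colon P(K)\to P(K_\beta)$ given by $j(\rho)=\rho|C(K_\beta)$. For each $g\in C(K_\beta)$ the assignment $\rho\mapsto\rho(g\circ\pi_{\beta,\omega_1})=j(\rho)(g)$ is weak$^*$-continuous, and these evaluations generate the weak$^*$ topology of $P(K_\beta)$, so $j$ is continuous; moreover $P(K)$ and $P(K_\beta)$ are compact Hausdorff and, since $K_\beta$ is metrizable, $P(K_\beta)$ is metrizable. The uniqueness from the previous paragraph says exactly that the fibre $j^{-1}[\{\nu_\beta\}]$ is the singleton $\{\nu\}$: any $\rho\in P(K)$ with $j(\rho)=\nu_\beta$ is a positive Radon measure restricting to $\nu_\beta$, hence equals $\nu$.

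It then remains to deduce countable character from a single point having a singleton fibre under a continuous map into a first countable space, which I would isolate as a general fact: if $j\colon X\to Y$ is continuous, $X$ compact Hausdorff, $Y$ first countable, and $j^{-1}[\{y_0\}]=\{x_0\}$, then $x_0$ has countable character in $X$. To prove it, fix a decreasing neighbourhood base $(V_n)_n$ of $y_0=\nu_\beta$ in $Y=P(K_\beta)$; I claim $(j^{-1}[V_n])_n$ is a neighbourhood base at $x_0=\nu$. Indeed, given an open $U\ni\nu$, the set $X\setminus U$ is compact, so $j[X\setminus U]$ is compact, and it cannot contain $\nu_\beta$, since any preimage of $\nu_\beta$ lies in $\{\nu\}\subseteq U$. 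Thus $Y\setminus j[X\setminus U]$ is an open neighbourhood of $\nu_\beta$, so it contains some $V_n$, whence $j^{-1}[V_n]\subseteq U$ while $\nu\in j^{-1}[V_n]$. The main obstacle is really the identification of the fibre as a singleton: once Proposition \ref{singular-collapses} and Lemma \ref{unique-extension} are available this is immediate, but the whole content lies in the fact that Haydon's condition, through the injectivity of $\pi_{\beta,\omega_1}$ on $E'$, forces the fibre of $j$ over $\nu_\beta$ to collapse to $\{\nu\}$; without it, measures restricting to $\nu_\beta$ could be spread over the later splittings and the character could be uncountable. The concluding topological statement is soft and is essentially the standard observation that preimages of neighbourhoods form a base at a point with compact (here singleton) fibre.
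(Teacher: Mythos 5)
Your proof is correct, and it differs from the paper's in how both halves are executed. For the crux --- that some countable level determines $\nu$ uniquely inside $P(K)$ --- the paper does \emph{not} invoke Lemma \ref{unique-extension}: it re-proves the uniqueness by hand, choosing compact $E_n'\subseteq E'$ with $\nu(E_n')\geq 1-\frac{1}{n}$, showing every $f\in C(K)$ agrees on $E_n'$ with some $g\circ\pi_{\alpha,\omega_1}$, proving $\rho(E')=1$ by dominated convergence, and then comparing $\int f\,d\rho$ with $\int f\,d\nu$. You instead note that Definition \ref{def-nu-beta} and Lemma \ref{unique-extension} are stated for all $\beta\leq\omega_1$, so they apply with $\beta=\omega_1$ once the proof of Proposition \ref{singular-collapses} (equivalently, re-running Corollary \ref{corollary-singular-Baire}, Corollary \ref{Baire-set-depends}, Haydon's condition and Lemma \ref{lemma-injective}) supplies the level $\beta$ and the Borel $E\subseteq K_\beta$ with $\nu_\beta(E)=1$, $\mu_\beta(E)=0$ and $\pi_{\beta,\omega_1}$ injective on $\pi_{\beta,\omega_1}^{-1}[E]$; you are right to cite the \emph{proof} rather than the bare statement of Proposition \ref{singular-collapses}, since the statement alone does not hand you $E$ or the injectivity. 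This appeal is legitimate and genuinely shorter --- in effect the paper's argument re-derives a special case of Lemma \ref{unique-extension} --- and note that for uniqueness alone one does not even need $\nu^{\omega_1}$ to be well defined on all Borel sets: the proof of Lemma \ref{unique-extension} pins any two positive candidates down on compact subsets of $\pi^{-1}[E]$ and of its complement, and inner regularity finishes. For the soft half, the paper takes a countable norm-dense $D\subseteq C(K_\alpha)$ and uses that finite intersections of a countable pseudobasis at a point of a compact space form a base, while you package the same compactness fact as the singleton-fibre argument for the continuous restriction map $j\colon P(K)\to P(K_\beta)$ with metrizable range; these are equivalent formulations of ``pseudocharacter equals character in compact Hausdorff spaces,'' and your quotient-map phrasing arguably makes the role of Haydon's condition (collapsing the fibre over $\nu_\beta$ to $\{\nu\}$) more transparent.
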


\begin{proof}
    It is enough to show that there is $\alpha<\omega_1$ such that
    the only $\rho\in P(K)$ such that $\rho|C(K_\alpha)=\nu|C(K_\alpha)$
    is $\nu$. This is because, in this case, we can take a countable norm dense set $D$
    in $C(K_\alpha)$ ($C(K_\alpha)$ is separable as $K_\alpha$ is metrizable) and,
    if $\rho|D=\nu|D$, then $\rho|C(K_\alpha)=\nu|C(K_\alpha)$ and so $\rho=\nu$. It follows that 
    the weak$^*$ basic open sets of the form
    $$\left\{\theta\in P(K): |(\theta-\nu)(d_i)|<\frac{1}{n} \right\}$$
    for some $d_i\in D$ and $n\in \N$
    form a pseudobasis at $\nu$ in
    $B_{C(K)^*}$. By compactness of $B_{C(K)^*}$, finite intersections of elements of this pseudobasis
    form a basis at $\nu$ which is countable as required.

    So fix $\nu\in P(K)$ such that $\nu\perp\mu$. By Corollary \ref{corollary-singular-Baire} 
    there is a Baire set $E'\subseteq K$ such that
    $\mu(E')=0$ and $\nu(E')=1$. Since $E'$ is Baire, there is by Corollary \ref{Baire-set-depends} $\beta<\omega_1$
    such that $E'=\pi_{\beta, \omega_1}^{-1}[E]$ for some $E\subseteq K_\beta$ and 
    $\mu_\beta(E)=\mu(E')=0$ by (C3).
    By Haydon's condition there is $\beta\leq\alpha<\omega_1$ such that for every $\alpha\leq\gamma<\omega_1$
    we have $\pi_{\beta, \gamma}^{-1}[E]\cap F_\gamma=\emptyset.$ So, $\pi_{\alpha, \omega_1}|E'$ is
     injective by Lemma \ref{lemma-injective}. For $n\in \N$ let $E_n' \subseteq E'$ be compact sets, 
     such that $\nu(E_n') \geq 1 - \frac{1}{n}$ (which exist by regularity of $\nu$). 
     It follows that $(\pi_{\alpha,\omega_1}|E_n')^{-1}$ is well defined and continuous 
     for every $n \in \N$ as $\pi_{\alpha,\omega_1}|E_n'$ is an injective continuous 
     function on the compact space $E_n'$. We will now show that for all $n \in \N$
    \begin{align*}
        \{f|E_n': f\in C(K)\}=\{(g\circ\pi_{\alpha, \omega_1})|E_n': g\in C(K_\alpha)\}.
    \end{align*}
    Fix $n \in \N$ and let $f \in C(K)$, then $f \circ (\pi_{\alpha,\omega_1}|E_n')^{-1}$
     is a continuous function on compact $\pi_{\alpha,\omega_1}[E_n']$, and thus is 
     uniformly continuous and can be extended to $g \in C(K_\alpha)$. 
     Clearly, $(g\circ\pi_{\alpha, \omega_1})|E_n' = f|E_n'$. 
     The other inclusion is clear as $g\circ\pi_{\alpha, \omega_1} \in C(K)$ for every $g \in C(K_\alpha)$.

    Let $\rho\in P(K)$ be such that $\rho|C(K_\alpha)=\nu|C(K_\alpha)$. 
    We will now show that $\rho(E')=1$. Fix $\varepsilon > 0$ and 
    find $n \in \N$ such that $\nu(E_n') > 1-\varepsilon$. It follows 
    that $S = \pi_{\alpha,\omega_1}[E_n']$ is a compact subset of metrizable 
    $K_\alpha$, and thus is $G_\delta$. Hence, there are for $n \in \mathbb{N}$ 
    continuous functions $h_n \in C(K_\alpha)$ such that $h_n(K_\alpha) \subseteq [0,1]$, 
    $h_n|S = 1$ and such that $h_n \rightarrow \chi_S$ pointwise. By the dominated 
    convergence theorem and injectivity of $\pi_{\alpha,\omega_1}$ on $E_n'$ we get
    \begin{align*}
        \nu(h_n) = \int h_n \circ \pi_{\alpha,\omega_1} d \nu &\rightarrow \int \chi_S 
        \circ \pi_{\alpha,\omega_1} d \nu = \int \chi_{E_n'} d \nu = \nu(E_n') > 1-\varepsilon,\\
        \rho(h_n) = \int h_n \circ \pi_{\alpha,\omega_1} d \rho &\rightarrow \int \chi_S 
        \circ \pi_{\alpha,\omega_1} d \rho = \int \chi_{E_n'} d \rho = \rho(E_n').
    \end{align*}
    But $\nu|C(K_\alpha) = \rho|C(K_\alpha)$, and hence $\rho(E_n') > 1-\varepsilon$. 
    As $\varepsilon$ was arbitrary and $E_n' \subseteq E'$ for all $n \in \N$, $\rho(E')=1$.

    Let $f\in C(K)$ and $n \in \N$. Find $g_n \in C(K_\alpha)$ such that $f|E_n' = 
    (g_n \circ \pi_{\alpha,\omega_1})|E_n'$. Then
    \begin{align*}
        \int fd\rho&=\int_{K \setminus E_n'} fd\rho + \int_{E_n'} f d \rho=
        \int_{K \setminus E_n'} fd\rho + \int_{E_n'} (g_n \circ \pi_{\alpha,\omega_1}) d \rho, \\
        \int fd\nu&=\int_{K \setminus E_n'} fd\nu + \int_{E_n'} f d \nu=\int_{K \setminus E_n'} 
        fd\nu + \int_{E_n'} (g_n \circ \pi_{\alpha,\omega_1}) d \nu.
    \end{align*}
    Hence, again as $\nu|C(K_\alpha) = \rho|C(K_\alpha)$,
    \begin{align*}
        \left|\int fd\rho - \int fd\nu \right| = \left|\int_{K \setminus E_n'} fd\rho - \int_{K \setminus E_n'} fd\nu \right| \rightarrow 0
    \end{align*}
    as both $\rho(K \setminus E_n')$ and $\nu(K \setminus E_n')$ converge to zero. 
    This proves that $\rho=\nu$ as required.
\end{proof}
Recall  
that if every element of $P(K)$ has countable character, then $K$ does not carry a measure of uncountable type
(see footnote 2).

\section{Guessing sequences of measures with $\diamondsuit$}

Recall that a subset $A$ of $\omega_1$ is a 
club set (closed unbounded), if it is closed in the order topology (or equivalently under taking
suprema of subsets) and unbounded in $\omega_1$. A subset $B$ of $\omega_1$
 is stationary if it intersects every club set in $\omega_1$. Clearly, every club set is stationary and for a 
 club set (resp. stationary set) $A$, its subset of limit ordinals $\operatorname{Lim} \cap A$
  is a club set (resp. stationary set). For more details see e.g. \cite{kunen, jech}.

\begin{lemma} \label{club} Suppose that $K_\alpha\subseteq\{0,1\}^\alpha$ for $\alpha\leq\omega_1$
are compact such that $\pi_{\alpha, \beta}[K_\beta]=K_\alpha$ for every
$\alpha\leq\beta\leq\omega_1$ and $K_\lambda$ is the inverse limit of
$(K_\beta)_{\beta<\lambda}$ for all limit $\lambda \leq \omega_1$ as in Definition \ref{def-limits}. Put $K = K_{\omega_1}$.

   \noindent  Suppose that $\nu \in P(K)$ and $\nu_\alpha\in P(K)$ for every $\alpha<\omega_1$. Further suppose that
    $$\nu \in \overline{\{\nu_\alpha: \alpha<\omega_1\}}^{w^*},$$ where
    the closure is with respect to the weak$^*$ topology of $C(K)^*$. 
    Then there is a closed unbounded $C\subseteq\omega_1$ such that for every $\beta\in C$ we have
    $$\nu|C(K_\beta) \in \overline{\{\nu_\alpha|C(K_\beta): \alpha<\beta\}}^{w^*},$$
    where the closure is taken with respect to the weak$^*$ topology of $C(K_\beta)^*$.
\end{lemma}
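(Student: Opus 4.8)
The plan is to run a reflection/closure argument based on two facts established earlier: $\bigcup_{\gamma<\omega_1}C(K_\gamma)$ is norm dense in $C(K)$ (Lemma \ref{limit}), and each $C(K_\gamma)$ is separable, being the space of continuous functions on the metrizable compactum $K_\gamma$. First I would fix, for each $\gamma<\omega_1$, a countable dense set $D_\gamma\subseteq C(K_\gamma)$ and set $\tilde D_\gamma=\bigcup_{\gamma'\leq\gamma}D_{\gamma'}$, which is countable, increasing in $\gamma$, and dense in $C(K_\gamma)$. Call a pair $(F,k)$ with $F\subseteq\tilde D_\gamma$ finite and $k\in\N$ a \emph{requirement at level $\gamma$}; there are only countably many of these for each fixed $\gamma$.

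Next I would extract the content of the global hypothesis. Since $\nu\in\overline{\{\nu_\alpha:\alpha<\omega_1\}}^{w^*}$ in $C(K)^*$, every basic weak$^*$ neighborhood of $\nu$ meets $\{\nu_\alpha:\alpha<\omega_1\}$. In particular, for a requirement $r=(F,k)$ at level $\gamma$, the neighborhood determined by the functions $\{f\circ\pi_{\gamma,\omega_1}:f\in F\}\subseteq C(K)$ and radius $1/k$ contains some $\nu_\alpha$. I fix one such witness $\alpha(r)<\omega_1$ for each $r$ and put $g(\gamma)=\sup\{\alpha(r)+1:r\text{ a requirement at level }\gamma\}$. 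Because there are countably many requirements per level, $g(\gamma)<\omega_1$, so $g\colon\omega_1\to\omega_1$ is well defined.

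I then let $C$ be the set of limit ordinals $\beta<\omega_1$ closed under $g$, i.e.\ with $g(\gamma)<\beta$ for all $\gamma<\beta$; this is a club, being the intersection of the club of closure points of $g$ with the club of limit ordinals. To verify the conclusion for $\beta\in C$, fix a basic weak$^*$ neighborhood of $\nu|C(K_\beta)$ in $C(K_\beta)^*$, given by $f_1,\dots,f_n\in C(K_\beta)$ and $\varepsilon>0$. Since $\beta$ is a limit ordinal, $K_\beta$ is the inverse limit of $(K_\gamma)_{\gamma<\beta}$, so $\bigcup_{\gamma<\beta}C(K_\gamma)$ is dense in $C(K_\beta)$ by Lemma \ref{limit}; combining this with density of the $\tilde D_\gamma$, I choose a single $\gamma<\beta$ (the maximum of finitely many levels below the limit $\beta$) and functions $f_i'\in\tilde D_\gamma$ with $\|f_i-f_i'\|_\infty<\varepsilon/3$, identifying $f_i'$ with $f_i'\circ\pi_{\gamma,\beta}\in C(K_\beta)$. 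Picking $k$ with $1/k<\varepsilon/3$ and invoking the witness for $r=(\{f_1',\dots,f_n'\},k)$, I get $\alpha:=\alpha(r)<g(\gamma)<\beta$ with $|\nu_\alpha(f_i'\circ\pi_{\gamma,\omega_1})-\nu(f_i'\circ\pi_{\gamma,\omega_1})|<\varepsilon/3$ for all $i$. A triangle inequality, using that $\nu$ and $\nu_\alpha$ have norm $1$ so that their values on $f_i\circ\pi_{\beta,\omega_1}$ and on $f_i'\circ\pi_{\gamma,\omega_1}=(f_i'\circ\pi_{\gamma,\beta})\circ\pi_{\beta,\omega_1}$ differ by at most $\|f_i-f_i'\|_\infty$, yields $|\nu_\alpha(f_i\circ\pi_{\beta,\omega_1})-\nu(f_i\circ\pi_{\beta,\omega_1})|<\varepsilon$ for all $i$. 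Thus $\nu_\alpha|C(K_\beta)$ lies in the prescribed neighborhood with $\alpha<\beta$; as the neighborhood was arbitrary, $\nu|C(K_\beta)\in\overline{\{\nu_\alpha|C(K_\beta):\alpha<\beta\}}^{w^*}$.

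The only genuine obstacle is the bookkeeping that makes the closure argument go through. One must arrange witnesses so that the supremum defining $g(\gamma)$ stays countable — this is exactly where separability of each $C(K_\gamma)$ (hence countably many requirements per level) is used — and one must confine the approximating level $\gamma$ strictly below $\beta$, which is exactly where the limit-ordinal reflection of the dense union $\bigcup_{\gamma<\beta}C(K_\gamma)$ into $C(K_\beta)$ enters. Once these two points are secured, the club-of-closure-points machinery is routine.
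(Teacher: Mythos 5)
Your proposal is correct and takes essentially the same approach as the paper: both are closure/reflection arguments resting on Lemma \ref{limit} (density of $\bigcup_{\gamma<\beta}C(K_\gamma)$ in $C(K_\beta)$ at limit $\beta$), separability of each $C(K_\gamma)$, and the same triangle-inequality approximation of a basic neighbourhood of $\nu|C(K_\beta)$ by functions living at a level $\gamma<\beta$. The only difference is packaging: the paper shows the set of \emph{all} good $\beta$ is itself closed and unbounded (its unboundedness proof iterates exactly your witness function $g$ along an $\omega$-chain), whereas you pre-assign witnesses globally and pass to the club of limit closure points of $g$ --- a cosmetic variation, since the lemma only asks for some club of good points.
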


\begin{proof}
    Let $C$ be the set of all such $\beta<\omega_1$.
    We will first prove that $C$ is closed. Let $(\beta_n)_{n\in \N}$ be an increasing sequence in $C$
    and $\beta = \sup_n \beta_n$. Fix $h_1, \dots h_k\in C(K_\beta)$ and $\varepsilon > 0$ defining a weak$^*$ 
    basic neighbourhood
    \begin{align*}
        \{\rho \in P(K_\beta): \; |\rho(h_i) - (\nu|C(K_\beta))(h_i)| <\varepsilon, \; i=1,\dots k\}
    \end{align*}
    of $\nu|C(K_\beta)$.
    By density of $\bigcup_{\gamma < \beta} C(K_\gamma)$ in $C(K_\beta)$ (Lemma \ref{limit}), 
    there are $g_1,\dots,g_k \in C(K_\gamma)$ for some $\gamma < \beta$ such 
    that $\|h_i-g_i \circ \pi_{\gamma,\beta}\|_\infty < \frac{\varepsilon}{4}$. 
    Fix $n \in \N$ such that $\gamma < \beta_n$. As $\beta_n \in C$, there is $\alpha < \beta_n$ 
    such that for all $i=1,\dots,k$
    \begin{align*}
        \left|(\nu|C(K_{\beta_n}))(g_i \circ \pi_{\gamma,\beta_n})-
        (\nu_\alpha|C(K_{\beta_n}))(g_i\circ \pi_{\gamma,\beta_n}) \right|
         < \frac{\varepsilon}{2}.
    \end{align*}
    Hence, for $i=1,\dots,k$
    \begin{align*}
        |(\nu_\alpha|C(K_\beta))(h_i) &-(\nu|C(K_\beta))(h_i)| \\
        &< |(\nu_\alpha|C(K_\beta))(g_i \circ \pi_{\gamma,\beta})-(\nu|C(K_\beta))(g_i\circ \pi_{\gamma,\beta})| +
         \frac{\varepsilon}{2} \\
        &= \left|(\nu_\alpha|C(K_{\beta_n}))(g_i \circ \pi_{\gamma,\beta_n})-
        (\nu|C(K_{\beta_n}))(g_i\circ \pi_{\gamma,\beta_n}) \right| 
        + \frac{\varepsilon}{2} \\
        &< \varepsilon,
    \end{align*}
    proving that $\beta \in C$.

    For unboundedness above an arbitrary $\beta_0<\omega_1$ we construct
    by induction an increasing sequence $(\beta_n)_{n\in \omega}$ in $\omega_1$
    such that for every $h_1, \dots h_k\in C(K_{\beta_n})$ and $\varepsilon>0$ there is
    $\alpha<\beta_{n+1}$ such that $|(\nu_\alpha|C(K_{\beta_n}))(h_i)-(\nu|C(K_{\beta_n}))(h_i)|<\varepsilon$ for $i = 1,\dots,k$.
    This can be done as it is sufficient to consider only $h_i$'s from countable norm dense subsets of $C(K_{\beta_n})$
     and rational $\varepsilon$'s as in the proof of closedness of $C$ above. Let $\beta=\sup_{n\in \omega}\beta_n$.
    Since $\bigcup_{n\in \omega}C(K_{\beta_n})$ is dense in $C(K_\beta)$, for every $h_1, \dots h_k\in C(K_{\beta})$
    and $\varepsilon>0$ we can find $g_1, \dots, g_k\in C(K_{\beta_n})$ for some $n\in \N$
    satisfying $\|g_i \circ \pi_{\beta_n,\beta}-h_i\|_\infty<\frac{\varepsilon}{4}$. By the construction there
     is $\alpha<\beta_{n+1}<\beta$
    such that $|(\nu_\alpha|C(K_{\beta_n}))(g_i)-(\nu|C(K_{\beta_n}))(g_i)|<\frac{\varepsilon}{2}$ for all $i = 1,\dots,k$.
    So for $i = 1,\dots,k$
    \begin{align*}
        |(\nu_\alpha|C(K_\beta))(h_i) &-(\nu|C(K_\beta))(h_i)| \\
        &< |(\nu_\alpha|C(K_\beta))(g_i \circ \pi_{\gamma,\beta})-(\nu|C(K_\beta))(g_i\circ \pi_{\gamma,\beta})| + 
        \frac{\varepsilon}{2} \\
        &= \left|(\nu_\alpha|C(K_{\beta_n}))(g_i \circ \pi_{\gamma,\beta_n})-(\nu|C(K_{\beta_n}))(g_i\circ \pi_{\gamma,\beta_n}) \right| + 
        \frac{\varepsilon}{2} \\
        &< \varepsilon,
    \end{align*}
    and $\beta \in C$, showing that $C$ is unbounded. 
\end{proof}

Recall Jensen's $\diamondsuit$ principle: there is a sequence $(S_\alpha)_{\alpha<\omega_1}$ such that
for every $X\subseteq\omega_1$ the set 
$$\{\alpha<\omega_1: X\cap\alpha=S_\alpha\}$$
is stationary in $\omega_1$. It is well known that $\diamondsuit$ is relatively consistent with {\sf ZFC} and implies
{\sf CH}. For more details see \cite{kunen} or \cite{jech}.

\begin{lemma} \label{diamond-measures}
    Assume $\diamondsuit$. There is a sequence $(M_\alpha)_{\alpha<\omega_1}$
    such that for every $\alpha<\omega_1$ the element $M_\alpha$ is a sequence $(M_{\alpha, n})_{n\in \omega}$
    of probability Radon measures on $\{0,1\}^\alpha$ and for every sequence
    $\rho = (\rho_\alpha)_{\alpha < \omega_1}$ of probability Radon measures on $\{0,1\}^{\omega_1}$ the set
    \begin{align*}
        S = \{\beta<\omega_1: & \{M_{\beta, n}: n\in \omega\}=\{\rho_\alpha|C(\{0,1\}^\beta): \alpha<\beta\} \\
        &\text{and } M_{\beta, 0}=\rho_0|C(\{0,1\}^\beta)\}
    \end{align*}
    is stationary in $\omega_1$.
\end{lemma}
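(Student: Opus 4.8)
The plan is to reduce the task to guessing a single subset of $\omega_1$, to which the given form of $\diamondsuit$ applies verbatim, and then to intersect the resulting stationary set with a club on which a fixed coding apparatus is coherent with respect to restriction. The key point is that, for countable $\beta$, the space $\{0,1\}^\beta$ is compact metrizable, so a probability Radon measure $m$ on it is completely determined by, and recoverable from, its values $m(a)$ on the \emph{countable} Boolean algebra $\A_\beta$ of clopen subsets of $\{0,1\}^\beta$; conversely any normalized finitely additive $[0,1]$-valued function on $\A_\beta$ extends to such a measure, countable additivity being automatic since clopen sets are compact. Moreover $(\rho|C(\{0,1\}^\beta))(a)=\rho(\pi_{\beta,\omega_1}^{-1}[a])$ for clopen $a\subseteq\{0,1\}^\beta$, so the restriction of a measure to $C(\{0,1\}^\beta)$ is read off from its values on the clopen sets depending on coordinates $<\beta$.

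First I would fix, once and for all, three pieces of bookkeeping: (a) an enumeration $\langle a_\xi:\xi<\omega_1\rangle$ of the clopen algebra $\A_{\omega_1}$ of $\{0,1\}^{\omega_1}$ such that $\{a_\xi:\xi<\beta\}=\A_\beta$ for club-many $\beta$; (b) a bijection $b:\omega_1\to\omega_1\times\omega_1\times\omega$ such that $b[\beta]=\beta\times\beta\times\omega$ for club-many $\beta$; and (c) a fixed injection of $[0,1]$ into $2^\omega$ (say by binary digits), together with a surjection $e_\beta:\omega\to\beta$ with $e_\beta(0)=0$ for every infinite $\beta$. Writing $C^\ast$ for the intersection of the two clubs in (a) and (b) (a club by the usual closure argument), for $\beta\in C^\ast$ a subset $X\subseteq\beta$ is decoded into the family of measures $(\tilde m_\alpha)_{\alpha<\beta}$ on $\{0,1\}^\beta$ determined, via $b$ and the digit coding, by the prospective values $\tilde m_\alpha(a_\xi)$ for $\alpha,\xi<\beta$, \emph{provided} these yield genuine probability measures (otherwise we decode to a default value, e.g. all $\tilde m_\alpha$ equal to the product measure).

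Given the $\diamondsuit$-sequence $(S_\alpha)_{\alpha<\omega_1}$, I would then \emph{define} $M_\alpha=(M_{\alpha,n})_{n\in\omega}$ by reindexing the decoded family: for infinite $\alpha\in C^\ast$ set $M_{\alpha,n}=\tilde m_{e_\alpha(n)}$, where $(\tilde m_\gamma)_{\gamma<\alpha}$ is the decoding of $S_\alpha$, and otherwise let all $M_{\alpha,n}$ be the product measure; this makes $(M_\alpha)_{\alpha<\omega_1}$ depend only on the $\diamondsuit$-sequence. Now fix a sequence $\rho=(\rho_\alpha)_{\alpha<\omega_1}$ of probability Radon measures on $\{0,1\}^{\omega_1}$ and let $X_\rho\subseteq\omega_1$ be the code, via $b$ and the digit coding, of the double family $\big(\rho_\alpha(a_\xi)\big)_{\alpha,\xi<\omega_1}$. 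By the coherence in (a) and (b), for every $\beta\in C^\ast$ the section $X_\rho\cap\beta$ codes exactly $\big(\rho_\alpha(a_\xi)\big)_{\alpha<\beta,\ \xi<\beta}$, i.e. the values on $\A_\beta$ of the measures $\rho_\alpha|C(\{0,1\}^\beta)$ for $\alpha<\beta$; hence its decoding is $\tilde m_\alpha=\rho_\alpha|C(\{0,1\}^\beta)$. Applying $\diamondsuit$ to $X_\rho$, the set $G=\{\beta<\omega_1:X_\rho\cap\beta=S_\beta\}$ is stationary, so $G\cap C^\ast$ is stationary; and for $\beta\in G\cap C^\ast$ we have $M_{\beta,n}=\rho_{e_\beta(n)}|C(\{0,1\}^\beta)$, whence $\{M_{\beta,n}:n\in\omega\}=\{\rho_\alpha|C(\{0,1\}^\beta):\alpha<\beta\}$ and $M_{\beta,0}=\rho_0|C(\{0,1\}^\beta)$ because $e_\beta(0)=0$. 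Thus $G\cap C^\ast\subseteq S$ and $S$ is stationary.

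I expect the only real work to be in verifying that the bookkeeping of (a)--(c) can be chosen coherently and that $C^\ast$ is club --- a routine elementary-submodel (or direct closure) argument --- together with the standard measure-theoretic fact that a Radon measure on the metrizable cube $\{0,1\}^\beta$ is faithfully coded by its values on the countable clopen algebra. Everything else is the mechanical transfer of the given $\diamondsuit$ through the fixed bijections.
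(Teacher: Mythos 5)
Your proof is correct and takes essentially the same approach as the paper's: both reduce the lemma to a standard form of $\diamondsuit$ by coding the double-indexed family of values of the measures on a countable-per-level determining family (your clopen algebras $\mathcal{A}_\beta$, the paper's products of coordinate functions $w_F$, $F\in[\omega_1]^{<\omega}$) through bijections coherent on a club of levels, then decoding the guessed initial segments, with a default value when decoding fails. The only cosmetic difference is bookkeeping: the paper invokes Devlin's functional variant of $\diamondsuit$ (guessing $f:\omega_1\to\omega_1$ by $f_\alpha:\alpha\to\alpha$) together with a {\sf CH}-bijection $\Phi_2:\mathbb{R}\to\omega_1$, whereas you use the plain subset version with binary-digit coding of the real values, which even avoids the explicit appeal to {\sf CH}.
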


\begin{proof}
    We use the following characterization of  $\diamondsuit$ (\cite[Theorem 2.7]{devlin}, cf. \cite[Lemma 6.4.]{dimension}):
    There exists a sequence $(f_\alpha)_{\alpha<\omega_1}$ of functions $f_\alpha:\alpha\rightarrow\alpha$
    such that for each $f:\omega_1\rightarrow\omega_1$
    the set $\{\alpha : f|\alpha=f_\alpha\}$ is stationary in $\omega_1$.

    First we establish some notation. For $\alpha < \omega_1$ let $w_\alpha \in C(\{0,1\}^{\omega_1})$, $w_\alpha(x) = 
    x(\alpha)$ be the coordinate function. For $F \in [\omega_1]^{<\omega}$ let 
    $w_F = \prod_{\alpha \in F} w_\alpha$ (we consider $w_\emptyset$ to be the constant $1$ function). 
    Then the linear span of $w_F$'s forms a subalgebra of $C(\{0,1\}^{\omega_1})$ 
    that contains constants and separates points of $\{0,1\}^{\omega_1}$, and thus is norm 
    dense in $C(\{0,1\}^{\omega_1})$ by the Stone-Weierstrass theorem. 
    Hence, every Radon measure $\nu \in M(\{0,1\}^{\omega_1})$ is determined by its values on 
    $w_F$'s for $F\in [\omega_1]^{<\omega}$. In the same way, for $\alpha < \omega_1$, the Radon 
    measure $\mu | C(\{0,1\}^\alpha)$ is determined by its values on $w_F$'s for $F\in [\alpha]^{<\omega}$.

    Hence, we can represent each probability Radon measure $\nu \in M(\{0,1\}^{\omega_1})$ by the function
    \begin{align*}
        \varphi_\nu : [\omega_1]^{<\omega} \rightarrow \mathbb{R}, \; \varphi_\nu (F) = \nu (w_F).
    \end{align*}
    Similarily, for $\beta < \omega_1$, probability Radon measures on $\{0,1\}^\beta$ can be represented as
     real valued functions on $[\beta]^{<\omega}$ and then $\nu | C(\{0,1\}^\beta)$ is represented by
    \begin{align*}
        \varphi_\nu |  [\beta]^{<\omega} : [\beta]^{<\omega} \rightarrow \mathbb{R}, \; \varphi_\nu (F) = \nu (w_F).
    \end{align*}
    If $\rho = (\rho_\alpha)_{\alpha < \omega_1}$ is a sequence whose elements are probability 
    Radon measures on $\{0,1\}^{\omega_1}$, we represent it by
    \begin{align*}
        \varphi_\rho: [\omega_1]^{<\omega} \times \omega_1 \rightarrow \mathbb{R}, \; \varphi_\rho (F,\alpha) = \rho_\alpha (w_F).
    \end{align*}
    Now, let $\Phi_1 : \omega_1 \rightarrow [\omega_1]^{<\omega} \times \omega_1$ be a bijection 
    such that for every limit $\alpha < \omega_1$ the restriction $\Phi_1 | \alpha$ is a bijection of $\alpha$ 
    and $[\alpha]^{<\omega} \times \alpha$\footnote{To see that this bijection exists note that for every 
    limit $\gamma < \omega_1$ there is a bijection 
    $\phi_\gamma: [\gamma,\gamma + \omega) \rightarrow 
    \left( [\gamma + \omega]^{< \omega} \times (\gamma + \omega) \setminus [\gamma]^{< \omega} \times \gamma \right)$. 
    We can take $\Phi_1 | [\gamma,\gamma + \omega) = \phi_\gamma$.}. 
    We also fix a bijection $\Phi_2 : \mathbb{R} \rightarrow \omega_1$
     (which exists as $\diamondsuit$ implies {\sf CH}). Finally, put
    \begin{align*}
        \psi_\rho = \Phi_2 \circ \varphi_\rho \circ \Phi_1 : \omega_1 \rightarrow \omega_1.
    \end{align*}
    Since for limit $\beta < \omega_1$ we know that $\Phi_1 | \beta$ is a bijection of $\beta$
     and $[\beta]^{<\omega} \times \beta$, $\psi_\rho | \beta$ is the representation of 
     $(\rho_\alpha|C(\{0,1\}^\beta))_{\alpha<\beta}$.

    Let $(f_\alpha)_{\alpha <\omega_1}$ be the diamond sequence mentioned in the beginning of the proof.
    Let $\alpha < \omega_1$ be a limit ordinal. If $f_\alpha = \psi_\rho | \alpha$ for some sequence 
    $\rho = (\rho_\alpha)_{\alpha < \omega_1}$ whose elements are Radon probability measures on $\{0,1\}^{\omega_1}$, 
    we set $M_\alpha = (M_{\alpha,n})_{n \in \omega}$ to be any enumeration of $\{\rho_\beta|C(\{0,1\}^\alpha)\}_{\beta <\alpha}$
     which satisfies $M_{\alpha,0} = \rho_0 | C(\{0,1\}^{\alpha})$.
    If $f_\alpha$ does not represent any such $\rho$, or if $\alpha < \omega_1$ is not a
     limit ordinal, we chose $M_\alpha$ to be any sequence of Radon probability measures on $\{0,1\}^\alpha$.

    It remains to show that the statement of the lemma is true for this choice of $(M_\alpha)_{\alpha < \omega_1}$. 
    Let $\rho = (\rho_\alpha)_{\alpha < \omega_1}$ be a sequence of Radon probability measures on $\{0,1\}^{\omega_1}$ 
    and let $S$ be as in the statement of the lemma. Set
    \begin{align*}
        T' = \{\alpha < \omega_1: \psi_\rho | \alpha = f_\alpha\}.
    \end{align*}
    Then $T'$ is stationary, and thus the subset $T = (T' \cap \operatorname{Lim})$ of its limit ordinals is also stationary. 
    But for $\alpha \in T$ we have that $(M_{\alpha,n})_{n \in \omega}$ is an enumeration 
    of $\{\rho_\beta|C(\{0,1\}^\alpha)\}_{\beta <\alpha}$ which satisfies 
    $M_{\alpha,0} = \rho_0 | C(\{0,1\}^{\alpha})$ and thus $\alpha \in S$. 
    Hence, $S$ is a superset of $T$ and is also stationary.
\end{proof}

\section{Single extension}

In this Section we show how to chose $\phi_\alpha$  to do the successor step from $K_\alpha$ to $K_{\alpha+1}$ for
$\alpha<\omega_1$ so that some topological conditions concerning closures are preserved (Lemma \ref{main-lemma}).

\begin{lemma}\label{half}
    Suppose that $\varepsilon > 0$, $n \in \N$ and $\{a_i: 1\leq i\leq n\}$ are non-negative real numbers smaller 
    than $\varepsilon$. Then there are disjoint sets $A, B$ such that $A\cup B=\{1, \dots, n\}$ 
    and
    $$\left|\sum_{i\in A}a_i-\frac{1}{2}\sum_{ i\leq n}a_i\right| = 
    \left|\sum_{i\in B}a_i-\frac{1}{2}\sum_{ i\leq n}a_i\right|< \frac{\varepsilon}{2}.$$
\end{lemma}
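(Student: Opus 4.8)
The plan is to partition the index set greedily, adding indices one at a time to whichever of the two groups currently has the smaller sum, and to track how the discrepancy between the partial sum of $A$ and half the total evolves. Concretely, I would process the indices $1,\dots,n$ in some order and maintain the quantity $D_k = \sum_{i\in A_k} a_i - \sum_{i \in B_k} a_i$ after the first $k$ indices have been assigned, where $A_k, B_k$ is the partition of $\{1,\dots,k\}$ built so far. The greedy rule is: assign index $k+1$ to $A$ if $D_k \leq 0$ and to $B$ if $D_k > 0$. The claim I would prove by induction is that $|D_k| < \varepsilon$ for every $k$, because whenever we add $a_{k+1} < \varepsilon$ to the smaller side, the absolute value of the signed difference cannot exceed $\max\{|D_k| - a_{k+1}|, a_{k+1}\}$, and both of these are strictly below $\varepsilon$ under the inductive hypothesis $|D_k|<\varepsilon$ together with $0 \leq a_{k+1} < \varepsilon$.

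Once the final partition $A = A_n$, $B = B_n$ is fixed, I would translate the bound on $D_n$ into the form demanded by the statement. Writing $S = \sum_{i\leq n} a_i$, we have $\sum_{i\in A} a_i - \tfrac{1}{2}S = \tfrac{1}{2}\bigl(\sum_{i\in A} a_i - \sum_{i\in B} a_i\bigr) = \tfrac{1}{2} D_n$, and symmetrically $\sum_{i\in B} a_i - \tfrac{1}{2}S = -\tfrac{1}{2}D_n$. Hence both quantities have absolute value exactly $\tfrac{1}{2}|D_n|$, which gives the claimed equality of the two expressions automatically, and the bound $|D_n| < \varepsilon$ yields $\tfrac{1}{2}|D_n| < \tfrac{\varepsilon}{2}$, exactly as required.

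The main point to get right is the inductive estimate in the greedy step, so let me record it carefully. Suppose $|D_k| < \varepsilon$ and, without loss of generality by symmetry, that $D_k \leq 0$, so we assign $k+1$ to $A$ and $D_{k+1} = D_k + a_{k+1}$. Then $-\varepsilon < D_k \leq D_{k+1} = D_k + a_{k+1} < 0 + \varepsilon = \varepsilon$, using $D_k > -\varepsilon$ for the lower bound and $D_k \leq 0$, $a_{k+1} < \varepsilon$ for the upper bound; thus $|D_{k+1}| < \varepsilon$. The base case $D_0 = 0$ is immediate (taking $A_0 = B_0 = \emptyset$), and since each $a_i < \varepsilon$ the hypothesis feeds the induction cleanly. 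I do not expect any genuine obstacle here — the only subtlety is that the inequality must stay strict throughout, which is why the hypothesis that every $a_i$ is \emph{strictly} less than $\varepsilon$ is used, and why the conclusion comes out strict as well.

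A brief remark on the role of this lemma: it is the combinatorial engine for the single extension step, where one wishes to split a finite collection of small masses into two halves so that the splitting map $\phi_\alpha$ perturbs integrals of the relevant test functions by less than a prescribed amount. The balancing guarantee $|\sum_{i\in A} a_i - \tfrac{1}{2}S| < \tfrac{\varepsilon}{2}$ is precisely what controls the error introduced when passing from $\mu_\alpha$ to $\mu_{\alpha+1}$ on the split set $F_\alpha$.
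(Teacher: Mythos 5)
Your proof is correct, but it takes a genuinely different route from the paper's. The paper, like you, first observes that the two expressions in the conclusion are automatically equal for \emph{any} partition (since the two sums add up to $\sum_{i\leq n} a_i$), but it then constructs $A$ in one shot as an initial segment of indices: it takes the least $k$ with $\sum_{i \leq k} a_i \geq \frac{1}{2}\sum_{i\leq n} a_i$ and notes that, because the partial sums $\sum_{i\leq k-1}a_i$ and $\sum_{i\leq k}a_i$ straddle the half-total and differ by $a_k < \varepsilon$, one of the prefixes $\{1,\dots,k\}$, $\{1,\dots,k-1\}$ lands within $\frac{\varepsilon}{2}$ of the half-total. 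You instead run an online greedy balancing scheme, assigning each index to the currently lighter side and maintaining the inductive invariant $|D_k| < \varepsilon$ for the signed discrepancy, then halving at the end via $\sum_{i\in A}a_i - \frac{1}{2}S = \frac{1}{2}D_n$. Both arguments rest on the same elementary fact that all increments are strictly below $\varepsilon$, and both correctly deliver the strict inequality; your detailed inductive paragraph is airtight (the earlier informal bound $\max\{|D_k| - a_{k+1}, a_{k+1}\}$ contains a stray absolute-value bar as written, but it is never used in the actual induction, which handles both signs of $D_k$ directly). As for what each approach buys: the paper's crossing-point argument is shorter, non-inductive, and produces $A$ as an interval of indices, while your greedy construction works online, in any processing order, and keeps the discrepancy uniformly small at every intermediate stage --- a mild extra robustness that is not needed for the application in the single extension step, where the finitely many sets $X_i^r$ and their masses are fixed in advance.
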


\begin{proof}
    First note that for any partition $A,B$ of $\{1, \dots, n\}$ we have
    $$\left|\sum_{i\in A}a_i-\frac{1}{2}\sum_{ i\leq n}a_i\right| = 
    \left|\sum_{ i\leq n}a_i - \sum_{i\in B}a_i-\frac{1}{2}\sum_{ i\leq n}a_i\right| = 
    \left|\sum_{i\in B}a_i-\frac{1}{2}\sum_{ i\leq n}a_i\right|.$$

    Let $k$ be the smallest element of $\{1,\dots,n\}$ such that $\sum_{i \leq k}
     a_i \geq \frac{1}{2} \sum_{i \leq n} a_i$. Then, as $0 \leq a_i < \varepsilon$ for all $1 \leq i \leq n$,
      either $$\left| \frac{1}{2} \sum_{i \leq n} a_i - \sum_{i \leq k} a_i \right| 
      < \frac{\varepsilon}{2} \;\; \text{ or } \;\; \left| \frac{1}{2} 
      \sum_{i \leq n} a_i - \sum_{i \leq k-1} a_i \right| < \frac{\varepsilon}{2}.$$
    If the first case holds set $A = \{1,\dots,k\}$, otherwise set $A = \{1,\dots,k-1\}$, 
    and finally let $B = \{1,\dots,n\} \setminus A$. Then clearly $A,B$ satisfy the conclusion of the lemma.
\end{proof}

\begin{lemma}\label{main-lemma} Suppose that  $\omega \leq \alpha<\omega_1$
and that a transfinite sequence $(K_\beta, \mu_\beta)_{\beta\leq \alpha}$ is
determined by a parameters $(F_\beta, \phi_\beta)_{\omega \leq \beta<\alpha}$. 
    Suppose that 
    \begin{enumerate}
    \item[(a)] $f_n^k, f^k$
    are Baire functions in $L_1(\mu_\alpha)$ for $n, k\in \N$.
    \item[(b)] $\nu^k, \nu_n^k$ are positive Radon measures on $K_\alpha$ such that $\nu^k(F_\alpha)=\nu_n^k(F_\alpha)=0$ 
    and $\nu^k, \nu_n^k\perp \mu_\alpha$ for every $n, k\in \N$.
     \item[(c)]  $f^k\mu_\alpha+\nu^k,
   f_n^k\mu_\alpha+\nu_n^k\in P(K_\alpha)$ for every $n, k\in\N$.
   \end{enumerate}
    Assume that for every $k\in\N$ we have
    $$f^k\mu_\alpha+\nu^k\in\overline{\{f_n^k\mu_\alpha+\nu_n^k: n\in \N\}}^{w^*},$$ 
    where the closure is taken with respect to the weak$^*$ topology in $C(K_\alpha)^*$.

    Then there is a continuous $\phi: (K_\alpha\setminus F_\alpha)\rightarrow \{0,1\}$ such that
    if $K_{\alpha+1}$ and $\mu_{\alpha+1}$ are extensions of $K_\alpha$ and $\mu_\alpha$ by $\phi$ respectively
    as defined in Definition \ref{def-ext}, 
    then for every $k\in \N$ we have
    $$(f^k\circ\pi_{\alpha, \alpha+1})\mu_{\alpha+1}+(\nu^k)^{\alpha+1}\in
    \overline{\{(f_n^k\circ\pi_{\alpha, \alpha+1})\mu_{\alpha+1}+(\nu_n^k)^{\alpha+1}: n\in \N\}}^{w^*},$$ 
    where the closure is taken with respect to the weak$^*$ topology in $C(K_{\alpha+1})^*$.
\end{lemma}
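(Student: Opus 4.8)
The plan is to compute the lifted measures against a convenient dense subalgebra of $C(K_{\alpha+1})$, to observe that only one summand fails to be weak$^*$ continuous in the measure, and then to choose $\phi$ so as to neutralise that summand. First, since $\alpha<\omega_1$ the space $K_{\alpha+1}$ is compact metrizable, so $P(K_{\alpha+1})$ is weak$^*$ metrizable and it suffices to test convergence against a sup-norm dense set. Writing $\pi=\pi_{\alpha,\alpha+1}$ and letting $e\in C(K_{\alpha+1})$ be the last coordinate $e(x)=x(\alpha)$, the functions $(g_0\circ\pi)+(g_1\circ\pi)e$ with $g_0,g_1\in C(K_\alpha)$ form, by Stone--Weierstrass, a dense subalgebra of $C(K_{\alpha+1})$ (they contain $C(K_\alpha)\circ\pi$ and separate the two points over each $x\in F_\alpha$). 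For $\sigma=f\mu_\alpha+\nu$ on $K_\alpha$ with $\nu(F_\alpha)=0$ and $\nu\perp\mu_\alpha$, write $\widehat\sigma=(f\circ\pi)\mu_{\alpha+1}+\nu^{\alpha+1}$ for its lift (legitimate by Definition \ref{def-nu-beta} with $E=K_\alpha\setminus F_\alpha$). Using $(C7)$ for the $\mu_{\alpha+1}$-part and the fact that $\nu^{\alpha+1}$ lives on the graph of $\phi$, I would verify
\[
\widehat\sigma\big((g_0\circ\pi)+(g_1\circ\pi)e\big)=\int_{K_\alpha}\Big(g_0+\tfrac12 g_1\Big)\,d\sigma+R_\phi(\sigma,g_1),\qquad R_\phi(\sigma,g_1):=\int_{K_\alpha\setminus F_\alpha} g_1\Big(\phi-\tfrac12\Big)\,d\sigma .
\]
The mechanism is that the term $\tfrac12\int_{F_\alpha}g_1\,d\sigma$ coming from the \emph{equal} splitting of $\mu_{\alpha+1}$ on $F_\alpha$ fuses with the off-$F_\alpha$ average into the single weak$^*$-continuous functional $\sigma\mapsto\int(g_0+\tfrac12 g_1)\,d\sigma$.

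Setting $\lambda^k=f^k\mu_\alpha+\nu^k$ and $\lambda^k_n=f^k_n\mu_\alpha+\nu^k_n$, the difference $\widehat{\lambda^k}(h)-\widehat{\lambda^k_n}(h)$ equals $\int(g_0+\tfrac12 g_1)\,d(\lambda^k-\lambda^k_n)$ plus $R_\phi(\lambda^k,g_1)-R_\phi(\lambda^k_n,g_1)$. The first piece is small for cofinally many $n$ straight from the hypothesis applied to the continuous function $g_0+\tfrac12 g_1$. Thus the whole problem reduces to choosing $\phi$ so that, for each $k$ and each $g_1$ from a fixed countable dense subset of $C(K_\alpha)$, the correction $R_\phi(\lambda^k_n,g_1)$ can be driven arbitrarily close to $R_\phi(\lambda^k,g_1)$ along a suitable sequence of indices $n$.

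The integrand of $R_\phi$ is continuous on the open set $K_\alpha\setminus F_\alpha$ and, extended by $0$, has all its discontinuities on $F_\alpha$; the essential difficulty is that $F_\alpha$ carries positive $\mu_\alpha$-mass (though zero $\nu$-mass), so one cannot simply invoke weak$^*$ continuity. I would handle this by localising at $F_\alpha$: fix continuous $v_m\downarrow\chi_{F_\alpha}$ and note that $\Psi_m:=g_1(\phi-\tfrac12)(1-v_m)$ extends continuously to $K_\alpha$ (the factor $1-v_m$ kills the discontinuities on $F_\alpha$) and converges pointwise boundedly to $g_1(\phi-\tfrac12)\chi_{K_\alpha\setminus F_\alpha}$. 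Hence $R_\phi(\sigma,g_1)=\int\Psi_m\,d\sigma+\int_{K_\alpha\setminus F_\alpha}g_1v_m(\phi-\tfrac12)\,d\sigma$, where the first term is weak$^*$ continuous (so the hypothesis transfers it along the sequence for fixed $m$, while dominated convergence kills the target's $m$-tail), and the second is supported, through the weight $v_m$, on a shrinking clopen neighbourhood of $F_\alpha$. Choosing $\phi$ to \emph{split the relevant measures in half} near $F_\alpha$ — on a nested sequence of clopen neighbourhoods $W_1\supseteq W_2\supseteq\cdots$ with $\bigcap_m W_m=F_\alpha$, partitioning each annulus $W_m\setminus W_{m+1}$ into clopen pieces of small diameter and distributing them between $\{\phi=0\}$ and $\{\phi=1\}$ by Lemma \ref{half} so that finitely many of the measures $\{\lambda^k,\lambda^k_n\}$ are balanced up to a summable error (on a small piece $g_1$ is essentially constant) — makes this near-$F_\alpha$ term negligible.

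The main obstacle, and the part needing the most care, is the simultaneity: a single continuous $\phi$ must halve, near $F_\alpha$, the \emph{countably many} mutually singular measures $\nu^k,\nu^k_n$ together with the absolutely continuous masses that may accumulate on $F_\alpha$, and it must do so compatibly with the fixed equal splitting that $\mu_{\alpha+1}$ already performs on $F_\alpha$ itself. Because $F_\alpha$ is $\mu_\alpha$-large but $\nu$-null, mass of the approximants can pile up arbitrarily close to $F_\alpha$, so the balancing cannot be uniform across all measures at a fixed scale; I expect the resolution to be a diagonal argument over the neighbourhoods $W_m$ in which only finitely many measures are balanced on each annulus, exploiting that every fixed measure is thinly spread deep inside $W_m$ (its mass on $W_m\setminus F_\alpha$ tends to $0$). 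Combining this balancing with the continuous approximation $v_m$ of $\chi_{F_\alpha}$ and the hypothesis for the continuous pieces should yield, for each $k$ and each test function, an index $n$ making $\widehat{\lambda^k_n}$ as close as desired to $\widehat{\lambda^k}$, which is the required membership in the weak$^*$ closure.
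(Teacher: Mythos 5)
Your reduction is sound and your overall strategy coincides with the paper's: the identity $\widehat\sigma\bigl((g_0\circ\pi)+(g_1\circ\pi)e\bigr)=\int (g_0+\tfrac12 g_1)\,d\sigma+R_\phi(\sigma,g_1)$ is correct (the paper phrases the same computation as estimates on clopen sets $(U_i\times\{l\})\cap K_{\alpha+1}$, which is equivalent since $\chi_{(U\times\{1\})\cap K_{\alpha+1}}=(\chi_U\circ\pi)\,e$), and the paper too builds $\phi$ by recursion on clopen annuli shrinking to $F_\alpha$, with bookkeeping over countably many tasks and with Lemma \ref{half} doing the halving. However, there is a genuine gap at the heart of your balancing step. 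Lemma \ref{half} requires the individual masses $a_i$ to be small, and you propose to secure this by chopping each annulus into pieces of small \emph{diameter} (so that $g_1$ is nearly constant). Small diameter does not give small mass: the measures $\nu^k_n$ are arbitrary positive Radon measures singular to $\mu_\alpha$ and may be purely atomic, with atoms of mass bounded away from $0$ accumulating at $F_\alpha$; no clopen refinement then produces pieces of small $\lambda^k_n$-mass, and a half-split of such a measure near $F_\alpha$ by assigning whole clopen pieces to $\{\phi=0\}$ or $\{\phi=1\}$ is simply impossible. For the same reason your plan of pre-scheduling, annulus by annulus, which of the countably many measures get balanced "up to a summable error" cannot work as stated: one cannot halve all the $\lambda^k_n$, and one does not need to.

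The missing idea is an order-of-quantifiers maneuver, and it is exactly where the paper is careful: only \emph{one} witness $n$ per task is ever halved, and its thinness is inherited from the target via the weak$^*$ hypothesis rather than arranged in advance. Concretely, at the stage devoted to a task $(k,j,\text{partition})$ one first chops the current annulus into clopen pieces $X^r$ of small $f^k\mu_\alpha$-mass (possible because $\mu_\alpha$ is nonatomic), notes that $\nu^k$ gives the whole collar small mass (regularity plus $\nu^k(F_\alpha)=0$), and only \emph{then} invokes the hypothesis to pick $n$ so that $\lambda^k_n$ nearly agrees with $\lambda^k$ on every piece $X^r$ and on the finitely many sets already carved out by $\phi_s$; this forces $\lambda^k_n(X^r)$ to be small, and Lemma \ref{half} is applied to the masses of the \emph{witness}, not of all measures. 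Finally, one reserves an inner clopen collar $Z\supseteq F_\alpha$ with $\lambda^k_n(Z\setminus F_\alpha)$ small (regularity again, after $n$ is fixed) on which $\phi$ remains undefined, so that the resulting estimate is robust under every future extension of $\phi$ -- your observation that each fixed measure is thinly spread deep inside the $W_m$ is the right germ of this, but it must be deployed after the witness is chosen, not before. A related small point: for the hypothesis to be applicable at a given stage, your localizing functions $v_m$ must equal $1$ on the not-yet-constructed region, so that $\Psi_m$ depends only on the part of $\phi$ already built. With these corrections your argument becomes the paper's proof in different notation.
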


\begin{proof}
    For brevity we use the following notation for $k,n \in \N$:
    \begin{align*}
        \rho^k &= f^k\mu_\alpha+\nu^k, \\
        \rho^k_n &= f^k_n\mu_\alpha+\nu^k_n, \\
        \tilde{\rho}^k &= (f^k\circ\pi_{\alpha, \alpha+1})\mu_{\alpha+1}+(\nu^k)^{\alpha+1}, \\
        \tilde{\rho}^k_n &= (f^k_n\circ\pi_{\alpha, \alpha+1})\mu_{\alpha+1}+(\nu^k_n)^{\alpha+1}.
    \end{align*}
    Fix a decreasing clopen basis $(V_m)_{m\in \N}$  of $F_\alpha$ in $K_\alpha$ and let $V_0 = K_\alpha$.
    Note that if $L$ is compact totally disconnected, $\mathcal{F}$ is a family of clopen
     partitions of $L$ such that for every clopen partition of $L$ there is an element of $\mathcal{F}$
      which refines it, and if $\mu\in B_{C(L)^*}$,
    then sets of the form 
    $$\bigcap_{1\leq i\leq m} \left\{\nu \in B_{C(L)^*}: |(\nu-\mu)(W_i)| < \frac{1}{j} \right\}$$
    for $m, j\in \N$ and $(W_i)_{i \leq m} \in \mathcal{F}$ form a neighborhood basis at $\mu$ in $B_{C(L)^*}$.
     This is because if $\mu\in B_{C(L)^*}$ and a subbasic weak$^*$ neighborhood of $\mu$
     of the form $\{\nu \in B_{C(L)^*}: |(\nu-\mu)(f)| < \frac{1}{j}\}$ is given where $f\in C(K)$,
     we can find a simple continuous function $g$ with $\|g-f\|<\frac{1}{4j}$, $\|g\|\leq\|f\|$ assuming $n\in \N$
     values on clopen subsets $\{W_1, \dots W_k\}$ of $L$. Then $|(\nu-\mu)(W_i)| < \frac{1}{2\|f\|jk}$
      for all $1\leq i\leq k$ and  $\nu\in B_{C(L)^*}$
     implies $|(\nu-\mu)(f)|<\frac{1}{j}$ since
     $$|(\nu-\mu)(f)|\leq 2\|f-g\|+\sum_{1\leq i\leq k}|(\nu-\mu)(W_i)|\|g\|
     < \frac{1}{2j}+k\frac{1}{2\|f\|jk}\|g\|\leq \frac{1}{j}.$$

    We will consider, as our family $\mathcal{F}$, the family of clopen partitions of $L=K_{\alpha} \times \{0,1\}$ of the form 
    $$U_1\times\{0\}, \dots, U_m\times\{0\}, U_1\times\{1\}, \dots, U_m\times\{1\}$$ 
    for $m\in\N$,  where 
    $U_1, \dots, U_m$ is a clopen partition of $K_\alpha$. 
    Certainly such partitions refine all clopen partitions of $K_{\alpha}\times \{0,1\}$.

    We construct $\phi$ from pieces $\phi_s$ of clopen domains disjoint from $F_\alpha$ by induction on $s\in \N$ such that
    $$K_\alpha\setminus V_s\subseteq \operatorname{dom}(\phi_s)$$
    for all $s\in \N$.  Start with $\phi_0=\emptyset$ and fix a bijection
    $\Phi=(\Phi_1, \Phi_2, \Phi_3)$ with domain $\N$ onto the cartesian
     product $\mathcal{F} \times \N \times \N$ (here we use the metrizability of $K_{\alpha}\times \{0,1\}$).
    
    Given $s\in \N$  we will extend $\phi_s$ to $\phi_{s+1}$
    in such a way that there is $n\in\N$ such that no matter how we extend $\phi_{s+1}$ to $\phi$, 
    the induced extensions $K_{\alpha+1}$ and $\mu_{\alpha+1}$ of $K_\alpha$ and $\mu_\alpha$ by $\phi$ 
    as in Definition \ref{def-ext} will satisfy
    $$\left|\left(\tilde{\rho}^k-
    \tilde{\rho}^k_n\right)
    ((U_i\times \{l\}) \cap K_{\alpha+1})\right|<\frac{1}{j}\leqno (0)$$
    for all ${1\leq i\leq m}$ and all $l\in\{0,1\}$, 
    where $\Phi_1(s) = \{U_1 \times \{l\}, \dots, U_m \times \{l\}: \; l \in \{0,1\}\}$, $\Phi_2(s)=k$ and $\Phi_3(s)=j$.
    
    As a first preparatory step in defining $\phi_{s+1}$ note
    that we may assume that $K_\alpha\setminus V_{s+1}$ is already
    included in $\operatorname{dom}(\phi_s)$ (by extending $\phi_s$ continuously to some clopen
    set disjoint from $F_\alpha$ containing $K_\alpha\setminus V_{s+1}$).
    Similarly by the regularity of $\nu^k$ and the hypothesis (b) that $\nu^k(F_\alpha)=0$ we may assume that 
    $$\nu^k(K_\alpha\setminus \operatorname{dom}(\phi_s))<\frac{1}{48j} \leqno (1)$$
    and by the regularity of $f^k\mu_\alpha$ we may assume that 
    $$(f^k\mu_\alpha)(K_\alpha\setminus (\operatorname{dom}(\phi_s)\cup F_\alpha))<\frac{1}{48j}.\leqno (2)$$
    For $U_1,\dots,U_m$ as above define
    \begin{align*}
        W_i&=U_i\cap\phi_s^{-1}[\{0\}] &\text{ for } 1\leq i\leq m,& \\
        W_i&=U_{i-m}\cap\phi_s^{-1}[\{1\}] &\text{ for } m+1\leq i\leq 2m,& \\
        W_i&=U_{i-2m}\setminus \operatorname{dom}(\phi_s) &\text{ for } 2m+1\leq i\leq 3m.&
    \end{align*} 
    Note that if $\phi:K_\alpha\setminus F_\alpha\rightarrow\{0,1\}$ is any function
    which extends $\phi_s$ and $K_{\alpha+1}$, $\mu_{\alpha+1}$ are extensions of
     $K_\alpha$, $\mu_\alpha$ by $\phi$, we get by (C6) that for $1\leq i\leq 2m$, 
     $g\in L_1(\mu_\alpha)$, $\rho\in M(K_\alpha)$, $\rho(F_\alpha)=0$, and $l\in \{0,1\}$
    $$((g\circ\pi_{\alpha, \alpha+1})\mu_{\alpha+1}+\rho^{\alpha+1})((W_i\times\{l\}) \cap K_{\alpha+1})=
      \begin{cases}
        0 & \text{if $\phi_s|W_i\not=l$} \\
       (g\mu_{\alpha}+\rho)(W_i) & \text{if $\phi_s|W_i=l$}.
      \end{cases}
    $$
    It follows that for any $n\in \N$ that satisfies
    $$|(\rho^k-\rho^k_n)
    (W_i)|<\frac{1}{3j} \leqno (3)$$
    for $1\leq i\leq 2m$ we have 
    $$|(\tilde{\rho}^k - \tilde{\rho}^k_n)
    ((W_i\times \{l\}) \cap K_{\alpha+1})|<\frac{1}{3j}\leqno (4)$$
    for $l \in \{0,1\}$ as this value is either $0$ if $\phi_s|W_i\not=l$, or $|(\rho^k-\rho^k_n)
    (W_i)|$ if $\phi_s|W_i=l$.

    Aiming at proving (0) to take care of $W_i\times\{l\}$ for $2m+1\leq i\leq 3m$ and $l\in \{0,1\}$ 
    in a similiar manner to (4) we will use, among others,
    the fact  that $f^k\mu_\alpha$ is nonatomic (as it is absolutely continuous
     with respect to the measure $\mu_\alpha$ which is nonatomic by Lemma \ref{lemma-unc-type}).
     This nonatomicity will be used to find parts $W_i^l$ of $W_i$ for $l=0,1$ 
     which have $\rho_k^n$ measure equal to more or less half of the measure 
     of the corresponding $W_i$.  Then $\phi_{s+1}$ will be defined as having its value $l$ in
     most of $W_i^l$ to guarantee an estimate similar to (4). The motivation behind this
     is that since  the essential part $\mu_\alpha$ of $\rho^k$ is split into two equal parts on $F_\alpha$ when we pass
     from $K_\alpha$ to $K_{\alpha+1}$ we try to split the measures $\rho^n_k$ the same way
     on neighbourhoods of $F_\alpha$.

    For each $1\leq i\leq m$ 
    we find pairwise disjoint clopen sets $X_i^r$ for $1\leq r\leq t$ for some $t\in \N$ 
    such that $\bigcup_{1\leq r\leq t}X_i^r=W_{2m+i}$ and 
    $$(f^k\mu_\alpha)(X_i^r)<\frac{1}{48j}$$
    for every $1\leq r\leq t$. By (1) this gives
    $$\rho^k (X_i^r)<\frac{1}{24j}\leqno(5)$$
    for every $1\leq r\leq t$.

    Use the hypothesis of the lemma to find $n\in \N$ such that
    $$|(\rho^k - \rho^k_n)
    (W_i)|<\frac{1}{3j}\leqno(6)$$
    for all ${1\leq i\leq 2m}$ and
    $$|(\rho^k - \rho^k_n)
    (X_i^r)|<\frac{1}{24jt}\leqno(7)$$
    for all ${1\leq i\leq m}$ and $1\leq r\leq t$. 
    By (5) and (7) it follows that for these $i$ and $r$ we have
    $$\rho^k_n(X_i^r \setminus F_\alpha) \leq \rho^k_n(X_i^r) < \frac{1}{12j}.\leqno(8)$$
    By Lemma \ref{half} this allows us to select for $i \leq m$ a partition of $\{1, \dots, t\}$ into sets $A_i$ and $B_i$
    such that for $W_i^0=\bigcup_{r\in A_i}X_i^r$, $W_i^1=\bigcup_{r\in B_i}X_i^r$ and $l \in \{0,1\}$ we have
    $$\left|\rho^k_n(W_i^l \setminus F_\alpha)-
    \frac{1}{2}\rho^k_n (W_i \setminus F_\alpha)\right|<\frac{1}{24j}.\leqno (9)$$
     
    Find, by regularity, a clopen $Z\subseteq K_\alpha$
    such that $F_\alpha\subseteq Z$, $Z\cap \operatorname{dom}(\phi_s)=\emptyset$ and 
    $$\rho^k_n(Z \setminus F_\alpha)<\frac{1}{48j}.\leqno(10)$$ 
    Finally define $\phi_{s+1}$
    by extending $\phi_s$ to 
    $$\operatorname{dom}(\phi_s)\cup\bigcup_{2m+1\leq i\leq 3m}(W_i\setminus Z)$$
    by putting $\phi_{s+1}|(W_i^l\setminus Z)=l$ for $2m+1 \leq i \leq 3m$ and $l\in \{0,1\}$.
    
    Now we will prove that $\phi_{s+1}$ works. The function $\phi_{s+1}$ is continuous
    (the sets $W_i \setminus Z$, $2m+1 \leq i \leq 3m$, 
    and $\operatorname{dom}(\phi_s)$ form a clopen
    partition of $\operatorname{dom}(\phi_{s+1})$ and $\phi_{s+1}$ 
    is continuous on each of these sets). Let $\phi$ be any extension of 
    $\phi_{s+1}$ and suppose that $K_{\alpha+1}$ and $\mu_{\alpha+1}$ 
    are extensions of $K_\alpha$ and $\mu_\alpha$ by $\phi$ as in Definition \ref{def-ext}.
    Fix ${2m+1\leq i\leq 3m}$ and $l\in \{0,1\}$.
    
    To obtain estimate as in (4) for $2m< i\leq 3m$ we will separately approximate
    the measures $\tilde\rho^k$ and $\tilde\rho^k_n$ on the sets $(W_i\times \{l\})\cap K_\alpha+1$
    by  considering several of their subsets on which we already know approximately
    the values of the measures either by reducing them to some measures 
    on $K_\alpha$ or exploiting the facts that their values are small. First we approximate the measures $\tilde\rho^k$:
    we have
    \begin{align*} \tag{11}
        \tilde{\rho}^k((W_i\times\{l\})\cap K_{\alpha+1})
        =&  \tilde{\rho}^k((W_i \cap F_\alpha)\times \{l\})   \\
        &+\tilde{\rho}^k(((W_i \setminus Z)\times \{l\}) \cap K_{\alpha+1})\\
        &+\tilde{\rho}^k(((Z\cap W_i \setminus F_\alpha)\times \{l\})\cap K_{\alpha+1}).
    \end{align*}
    Let us deal with the three terms of the right hand side of (11) term by term. For the first term by (C7) 
    of Definition \ref{def-general} and the hypothesis (b) that $\nu^k(F_\alpha) = 0$ we have
    \begin{align*}
        \tilde{\rho}^k((W_i \cap F_\alpha)\times \{l\}) &= ((f^k\circ\pi_{\alpha, \alpha+1})
        \mu_{\alpha+1}) ((W_i \cap F_\alpha)\times \{l\}) \\
        &= \frac{1}{2} (f^k \mu_\alpha)(F_\alpha\cap W_i) = \frac{1}{2} \rho^k(F_\alpha\cap W_i).
    \end{align*}
    For the second term by the choice of $\phi_{s+1}$ and (C6) of Definition \ref{def-general} we have
    \begin{align*}
        \tilde{\rho}^k(((W_i \setminus Z)\times \{l\}) \cap K_{\alpha+1})
        = \rho^k (W_i^l \setminus Z)
        = \frac{1}{2}\rho^k(W_i \setminus F_\alpha) + c_1,
    \end{align*}
    where
    \begin{align*}
        |c_1| = \left|\rho^k (W_i^l \setminus Z) - \frac{1}{2}\rho^k(W_i \setminus F_\alpha) \right|
         \overset{(1,2)}{\leq} \frac{1}{24j}
    \end{align*} 
    as both $W_i \setminus F_\alpha$ and $W_i^l \setminus Z$ do not intersect $\operatorname{dom} (\phi_s)$
    and both of the terms inside the absolute value are non-negative.
    Now let us move to the third term of the right hand side of (11)
    We set
    \begin{align*}
       c_2= \tilde{\rho}^k(((Z \setminus F_\alpha)\times \{l\})\cap K_{\alpha+1}) \geq 
       \tilde{\rho}^k(((Z\cap W_i \setminus F_\alpha)\times \{l\})\cap K_{\alpha+1}),
    \end{align*}
    then
    \begin{align*}
        |c_2| &= \left| ((f^k\circ\pi_{\alpha, \alpha+1})\mu_{\alpha+1}+
        (\nu^k)^{\alpha+1})(((Z \setminus F_\alpha)\times \{l\})\cap K_{\alpha+1}) \right| \\
        & \leq \left| (f^k \mu_\alpha) (Z \setminus F_\alpha) \right| + \left| \nu^k 
        (Z \setminus F_\alpha) \right| \overset{(1,2)}{\leq} \frac{1}{24j}
    \end{align*}
    as $Z \setminus F_\alpha$ does not intersect $\operatorname{dom}(\phi_{s})$. 
    Altogether, we obtain the following approximation of the value in (11)
    \begin{align*}\tag{12}
        | \tilde{\rho}^k((W_i\times\{l\}) & \cap K_{\alpha+1})-\frac{1}{2} \rho^k (W_i) |\\
        &=\left|\tilde{\rho}^k((W_i\times\{l\})\cap K_{\alpha+1})- \frac{1}{2} \rho^k (F_\alpha\cap W_i) - \frac{1}{2}\rho^k(W_i \setminus F_\alpha) \right|\\
        &\leq |c_1|+|c_2|\leq \frac{1}{12j}.
    \end{align*}
    Now we approximate
    the measures $\tilde\rho^k_n$  on the sets $(W_i\times \{l\})\cap K_{\alpha+1}$
    by considering several of their subsets on which we already know approximately
    the values of the measures:
    \begin{align*} \tag{13}
        \tilde{\rho}^k_n ((W_i\times\{l\})\cap K_{\alpha+1}) =&  \tilde{\rho}^k_n((W_i \cap F_\alpha)\times \{l\})   \\
        &+\tilde{\rho}^k_n(((W_i \setminus Z)\times \{l\}) \cap K_{\alpha+1})\\
        &+\tilde{\rho}^k_n(((Z\cap W_i \setminus F_\alpha)\times \{l\})\cap K_{\alpha+1}).
    \end{align*}
    
    Let us deal with the three terms
    of the right hand side of (13) again term by term. For the first term again, by (C7) of Definition \ref{def-general}
    and the hypothesis (b)  that $\nu^k_n(F_\alpha) = 0$ we have
    \begin{align*}
        \tilde{\rho}^k_n((W_i \cap F_\alpha)\times \{l\}) = \frac{1}{2} \rho^k_n(F_\alpha\cap W_i).
    \end{align*}
    For the second term by the choice of $\phi_{s+1}$ and (C6) of Definition \ref{def-general} we have
    \begin{align*}
        \tilde{\rho}^k_n(((W_i \setminus Z)\times \{l\}) \cap K_{\alpha+1}) &= \rho^k_n(W_i^l \setminus Z) \\
        &= \rho^k_n(W_i^l \setminus F_\alpha) - \rho^k_n(Z \setminus F_\alpha)\\
        &= \frac{1}{2}\rho^k_n(W_i \setminus F_\alpha) + c_3
    \end{align*}
    where
    \begin{align*}
        |c_3| &= \left| \rho^k_n(W_i^l \setminus F_\alpha) - \rho^k_n(Z \setminus F_\alpha) - 
        \frac{1}{2}\rho^k_n(W_i \setminus F_\alpha) \right| \\
        &\leq  \left| \rho^k_n(W_i^l \setminus F_\alpha) - \frac{1}{2}\rho^k_n(W_i \setminus F_\alpha) \right|
         + \left| \rho^k_n(Z \setminus F_\alpha) \right|   \overset{(9,10)}{<} \frac{1}{16j}.
    \end{align*}    
   Considering the third term we will write
    \begin{align*}
        c_4=
        \tilde{\rho}^k_n(((Z \cap W_i\setminus F_\alpha)\times \{l\})\cap K_{\alpha+1}).
    \end{align*}
    Then
    \begin{align*}
        |c_4| \leq \left| \rho^k_n (Z \setminus F_\alpha) \right| \overset{(10)}{<} \frac{1}{48j}.
    \end{align*}
    
    Altogether, (13) gives
    \begin{align*}\tag{14}
        |\tilde{\rho}^k_n((W_i&\times\{l\})\cap K_{\alpha+1})-\frac{1}{2} \rho^k_n(W_i)| \\
        &= \left|\tilde{\rho}^k_n((W_i\times\{l\})\cap K_{\alpha+1}) - \frac{1}{2} \rho^k_n(F_\alpha\cap W_i) -
        \frac{1}{2}\rho^k_n(W_i \setminus F_\alpha) \right| \\
        &\leq |c_3|+|c_4|\leq \frac{1}{12j}.
    \end{align*}
    
    Finally using the choice of $\phi$ (as any  extension of $\phi_{s+1}$) and the choice of $n$ in (6)
    we have  by (4)
    $$|(\tilde{\rho}^k - \tilde{\rho}^k_n)
    ((W_i\times \{l\}) \cap K_{\alpha+1})|<\frac{1}{3j}$$
    for $1\leq i\leq 2m$ and $l \in \{0,1\}$ and 
    by (12) and (14),
    \begin{align*}
        |(\tilde{\rho}^k - \tilde{\rho}^k_n)
        ((W_i\times \{l\}) \cap K_{\alpha+1})| < \frac{1}{6j}+\frac{1}{12j}+\frac{1}{12j} \leq \frac{1}{3j}
    \end{align*}
    for $2m+1\leq i\leq 3m$ and $l \in \{0,1\}$.
    
    So, as $U_i = W_i \cup W_{m+i} \cup W_{2m+i}$, we get
    $$|(\tilde{\rho}^k - \tilde{\rho}^k_n)
    ((U_i\times \{l\}) \cap K_{\alpha+1})|< \frac{1}{j} \leqno{(15)}$$
    for $1\leq i \leq m$ and $l \in \{0,1\}$ as required.
    
    We define $\phi = \bigcup_{s \in \N} \phi_s$. Then $\operatorname{dom}\phi = 
    K_\alpha \setminus F_\alpha$ as $\operatorname{dom}\phi_s \supseteq K_\alpha \setminus V_s$ for all $s \in \N$.
    The function $\phi$ is continuous as for $l \in \{0,1\}$ we have $\phi^{-1}[\{l\}] = \bigcup_{s \in \N} \phi_s^{-1}[\{l\}]$
     and $\phi_s$ were continuous and had clopen domains.
    Further, for any clopen partition $\{U_1, \dots, U_m\}$ of $K_\alpha$, and for all $k,j \in \N$, there is $s \in \N$ 
     such that $\Phi_1(s) = \{U_1 \times \{l\}, \dots, U_m \times \{l\}: \; l \in \{0,1\}\}$, $\Phi_2(s)=k$ and $\Phi_3(s)=j$, and thus by the
      above there is $n \in \N$ such that (15) is satisfied
    for $1\leq i \leq m$ and $l \in \{0,1\}$. As $\{U_1, \dots, U_m\}$ and $j \in \N$ were arbitrary, 
    we have that $$\tilde{\rho}^k \in \overline{\{\tilde{\rho}^k_n : \; n \in \N\}}^{w^*}.$$
    As $k \in \N$ was arbitrary, the lemma follows.
\end{proof}

\section{Constructing the space}

Before we proceed with the construction of our space $K$ let us recall that if $\rho \in P(K)$ is decomposed into $\rho = f \mu + \nu$ for $\nu \perp \mu$, then both $f \mu$ and $\nu$ are positive.

\noindent{\bf Proof  of Theorem \ref{main}:}
\begin{proof}
As we assume $\diamondsuit$ we have the continuum hypothesis {\sf CH} as well.
Fix 
$$\{(M_{\alpha, n})_{n\in \omega}: \alpha<\omega_1\}$$ as in Lemma \ref{diamond-measures}.

We construct $K\subseteq\{0,1\}^{\omega_1}$ and a Radon 
probability measure $\mu$ on $K$ as $K_{\omega_1}$ and $\mu_{\omega_1}$
for  a transfinite sequence $(K_\alpha, \mu_\alpha)_{\alpha\leq \omega_1}$ 
determined by parameters $(F_\alpha, \phi_\alpha)_{\omega \leq \alpha<\omega_1}$ as in Definition \ref{def-general}.
Conditions (C1)-(C3) give us the construction of $K_\alpha$ and $\mu_\alpha$ for finite and limit ordinals
 $\alpha \leq \omega_1$.
Conditions (C4)-(C7) applied to $\phi_\alpha$ obtained from Lemma 
\ref{main-lemma} with the sequences of measures given by the $\diamondsuit$-sequence and $F_\alpha$
chosen to take care of Haydon's condition
give the construction of $K_{\alpha+1}$ and $\mu_{\alpha+1}$ from 
$K_\alpha$ and $\mu_\alpha$ for $\omega \leq \alpha < \omega_1$. 

More precisely, we construct
by transfinite recursion on $\omega \leq \alpha < \omega_1$ 
\begin{enumerate}
\item $K_\alpha$ and $\mu_\alpha$ as in Definition \ref{def-general}  determined by parameters
$(F_\beta, \phi_\beta)_{\omega \leq \beta<\alpha}$.
    \item An enumeration $(N_{\alpha, \beta})_{\beta<\omega_1}$ of all $\mu_\alpha$-null
     Borel subsets of $K_\alpha$.
    \item Baire functions  $f_{\alpha}, f_{\alpha, n}$  in $L_1(\mu_\alpha)$ for $n\in \N$
    and Radon measures $\nu_\alpha$, $\nu_{\alpha, n}$ for $n\in \N$ such that
    $f_{\alpha}\mu_\alpha+\nu_{ \alpha}, f_{\alpha, n}\mu_\alpha+\nu_{ \alpha, n}$
    are all Radon probability measures on $K_\alpha$ and  $\nu_{\alpha}, \nu_{\alpha, n}\perp\mu_\alpha$ 
    are such that $$f_{\alpha}\mu_\alpha+\nu_{\alpha} \in \overline{\{f_{\alpha, n}\mu_\alpha+\nu_{\alpha, n}: 
    {n\in \N}\}}^{w^*}$$ where the closure is taken with respect to the weak$^*$ topology on $M(K_\alpha)$.
    
\end{enumerate}
Additional properties which we prove inductively on $\alpha<\omega_1$ are (C1)-(C7) and:
\begin{enumerate}
    \item[(4)]  Whenever $(M_{\alpha, n})_{n\in \N}$ consists of Radon probability measures on $\{0,1\}^\alpha$ which are
    all concentrated on $K_\alpha$ and   $M_{\alpha, 0} \in \overline{(M_{\alpha, n})_{n>0}}^{w^*}$, 
    where the closure is taken with respect to the weak$^*$ topology on $M(K_\alpha)$, 
    then $$f_{\alpha}\mu_\alpha+\nu_\alpha=M_{\alpha, 0}\ 
    \hbox{ and }\  f_{\alpha, n}\mu_\alpha+\nu_{\alpha, n}=M_{\alpha, n}$$
    for all $n\in \N$. 
    \item[(5)] $F_{\alpha'}$ is disjoint from 
    $\pi_{\beta, {\alpha'}}^{-1}[N_{\beta, \gamma}]$ for every $\beta, \gamma\leq\alpha'<\alpha$.
    \item[(6)]     $(\nu_{\beta})^{\alpha'}(F_{\alpha'})=(\nu_{\beta, n})^{\alpha'}(F_{\alpha'})=0$
    for all $n\in \N$ and $\beta\leq\alpha'<\alpha$.
    \item [(7)] For every $\beta<\alpha \leq \omega_1$
    $$(f_{\beta}\circ \pi_{\beta, \alpha})\mu_\alpha+(\nu_{\beta})^\alpha \in
     \overline{\{(f_{\beta, n}\circ \pi_{\beta, \alpha})\mu_\alpha+(\nu_{\beta, n})^\alpha :\; n \in \N\}}^{w^*}$$
    where the closure is taken with respect to the weak$^*$ topology on $M(K_\alpha)$.
    
\end{enumerate}

Let us see that such recursive constructions can be carried
out. Suppose that we have constructed all the objects as in (1)-(3) that satisfy (1)-(7) for all $\alpha'<\alpha$ and (C1)-(C7)
of Definition \ref{def-general} for $\delta=\alpha$.

First assume that
$\alpha=\alpha'+1$ for some $\alpha'<\omega_1$.  To construct $K_\alpha=K_{\alpha'+1}$
as in (C6) and $\mu_\alpha=\mu_{\alpha'+1}$ as in (C7)
we need to specify the new parameters $F_{\alpha'}$ and $\phi_{\alpha'}$ as in (C4) and (C5) (as in
Definition \ref{def-general} in the construction of $K_\delta$ we specify the values of the parameters only for 
ordinals less than $\delta$).

First we show how to choose $F_{\alpha'}$ such that (C4),  (5) and (6) are satisfied. 
Since by the inductive hypothesis (3) the measures $\nu_\beta, \nu_{\beta, n}$, $n \in \N$, are 
singular with respect to $\mu_\beta$ respectively for
every $\beta\leq\alpha'$, there are Borel $G_\beta, G_{\beta,n}\subseteq K_{\beta}$
for $\beta\leq\alpha'$ and $n \in \N$ such that 
$$\mu_\beta(G_\beta)=\mu_\beta(G_{\beta, n})=\nu_{\beta, n}(K_\beta\setminus G_{\beta, n})=
\nu_{\beta}(K_\beta\setminus G_{\beta})=0.$$
It follows from (C3) of Definition \ref{def-general} that the preimages
of the  sets $G_\beta$, $G_{\beta, n}$, $N_{\beta, \gamma}$ for
$\beta, \gamma\leq \alpha'$ and $n \in \N$ under $\pi_{\beta, \alpha'}$ are $\mu_{\alpha'}$-null.
So by taking a compact subset $F_{\alpha'}'$ which is disjoint with
all these (countably many) sets and such that $\mu_{\alpha'}(F_\alpha') > \frac{3}{4}$ 
(by regularity of $\mu_{\alpha'}$) we can ensure that (C4), (5) and (6) are satisfied for $F_{\alpha'}'$ instead of $F_{\alpha'}$
with the exception of $F_{\alpha'}'$ not being nowhere dense as required in (C4).
What remains is to take a closed nowhere dense subset $F_{\alpha'}$ of $F_{\alpha'}'$ such 
that $\mu_{\alpha'}(F_{\alpha'}) \geq \frac{3}{4}$.
To do this, let $(U_n)_{n \in \N}$ be an open basis of the compact metrizable space $K_{\alpha'}$. 
As $\mu_{\alpha'}$ is atomless by Lemma \ref{lemma-unc-type}, there is for each $n \in \N$ an 
open subset $U_n'$ of $U_n$ such that $\mu_{\alpha'}(U_n') < 2^{-n} (\mu_{\alpha'}(F_{\alpha'}') - \frac{3}{4})$.
Let $F_{\alpha'} = F_{\alpha'}' \setminus \bigcup_{n \in \N} U_n'$. Then $F_{\alpha'}$ is clearly closed, 
is nowhere dense as it contains no $U_n$ as a subset and $\mu_{\alpha'}(F_{\alpha'}) \geq \frac{3}{4}$. 
Hence, $F_{\alpha'}$ satisfies (C4), (3), (5) and (6) as required.

Note that the hypothesis of Lemma \ref{main-lemma} for the measures
$f_{\beta}\circ\pi_{\beta, \alpha'}\mu_{\alpha'}+(\nu_\beta)^{\alpha'}$,
$f_{\beta, n}\circ\pi_{\beta, \alpha'}\mu_{\alpha'}+(\nu_{\beta, n})^{\alpha'}$ for $\beta\leq\alpha'$ and $n \in \N$,
and $F_{\alpha'}$
 is satisfied by conditions (3) and (6)
of the inductive hypothesis   at $\alpha'$ and the construction of $F_{\alpha'}$.
So we use Lemma \ref{main-lemma}
to  construct $\phi_{\alpha'}$ and following (C4)-(C7) the extensions
$K_\alpha=K_{\alpha'+1}$  and $\mu_{\alpha}=\mu_{\alpha'+1}$
by $\phi_{\alpha'}$ which satisfy this Lemma.
By Lemma \ref{main-lemma}
 we immediately obtain  that (7) is satisfied at $\alpha$.

Now to  make sure that (4) is satisfied we check  if 
its hypothesis  is satisfied, that is if $(M_{\alpha, n})_{n\in \N}$ consists of probability 
measures on $\{0,1\}^\alpha$ which are
all concentrated on $K_\alpha$ and the  weak$^*$ closure of  $(M_{\alpha, n})_{n>0}$ 
contains the probability measure $M_{\alpha, 0}$.
If so, we declare $f_{\alpha}\mu_\alpha+\nu_\alpha=M_{\alpha, 0}$ 
and $f_{\alpha, n}\mu_\alpha+\nu_{\alpha, n}=M_{\alpha, n}$ for $n \in \N$.
Otherwise we declare $f_{\alpha}\mu_\alpha+\nu_\alpha$
and $f_{\alpha, n}\mu_\alpha+\nu_{\alpha, n}$, $n \in \N$, to be  all the same probability measure
on $K_\alpha$. This way we have (3) and (4) at $\alpha$.
Condition (2) can be easily obtained since we have {\sf CH}.

This completes checking that
making the successor step is possible preserving all conditions (1)-(7) and (C1)-(C7).

Now suppose that $\alpha$ is a limit ordinal.  We construct $K_\alpha$, $\mu_\alpha$
following (C1) of Definition \ref{def-general} obtaining (1).  Condition (2)
is taken care by {\sf CH}.  Construction of objects mentioned in (3) taking
care of (4) is done the same way as in the successor case given $K_\alpha$. For limit $\alpha$
conditions (5) and (6) follow
from the inductive hypothesis (5) and (6).
Condition (7) immediately follows
from (7) for $\alpha'<\alpha$ because $\bigcup_{\alpha'<\alpha}C(K_{\alpha'})$ is dense in
$C(K_\alpha)$ by  Lemma \ref{limit}, so if a bounded net of measures converges
on each element of this set, it converges on every element of $C(K_\alpha)$ \footnote{Indeed, let $(\rho_\lambda)_\lambda$, $\rho$ be measures bounded by constant $K > 0$ such that for each $f \in \bigcup_{\alpha'<\alpha}C(K_{\alpha'})$ we have $\rho_\lambda(f) \rightarrow \rho(f)$. For $\epsilon > 0$ and $g \in C(K_\alpha)$ we find $f \in \bigcup_{\alpha'<\alpha}C(K_{\alpha'})$ with $\|f-g\| < \frac{\epsilon}{4K}$ and $\lambda_0$ such that for all $\lambda > \lambda_0$ we have $|\rho_\lambda(f) - \rho(f)| < \frac{\epsilon}{2}$. Then for $\lambda > \lambda_0$ $|\rho_\lambda(g) - \rho(g)| \leq |\rho_\lambda(f) - \rho(f)| + |\rho_\lambda(g-f) - \rho(g-f)| < \epsilon + 2K \|g-f\| < \epsilon$.}.

This completes the construction of $(K_\alpha)_{\alpha \leq \omega_1}$ and 
$(\mu_\alpha)_{\alpha\leq \omega_1}$ which satisfies (1)-(7) and (C1)-(C7).  Note
that Haydon's condition is satisfied by (5). 
In particular,  by Lemma \ref{lemma-unc-type} the measure  $\mu$ is of uncountable type.

Now we prove that $P(K)$ has  countable tightness. First we note that
the weight and so the character of  $P(K)$ in the weak$^*$ topology is not bigger than $\omega_1$.
Indeed, as $K\subseteq \{0,1\}^{\omega_1}$, it has weight not bigger than $\omega_1$
and so $C(K)$ has density not bigger than $\omega_1$ and hence the dual ball
of $C(K)$ and its subspace $P(K)$ has weight not bigger than $\omega_1$.

Hence, we need to show that whenever
a point $f_0\mu+\nu_0$ of $P(K)$  is in the closure of a set 
$\{f_\xi\mu+\nu_\xi: 0<\xi<\omega_1\}\subseteq P(K)$, then
there is $\alpha<\omega_1$ such that
$f_0\mu+\nu_0$ is in the closure of 
$\{f_\xi\mu+\nu_\xi: 0<\xi<\alpha\}$. Here $f_\xi\in L_1(\mu)$ 
and $\nu_\xi$'s are Radon measures on $K$ which are singular
with respect to $\mu$. By Proposition \ref{baire-L1} we may assume that each
$f_\xi$, $\xi <\omega_1$, is a Baire function and so depends (in the sense of
Definition \ref{def-depends}) on some $\gamma_\xi<\omega_1$ by Proposition \ref{baire-depends}.
Let $\beta_\xi<\omega_1$ be such that 
$\nu_\xi|C(K_{\beta'})=(\nu_\xi|C(K_{\beta_\xi}))^{\beta'}$ for
every $\beta_\xi\leq\beta'\leq\omega_1$ which exists by Proposition \ref{singular-collapses}.
Let $\alpha_\xi=\max\{\gamma_\xi,\beta_\xi\}$ for every $\xi<\omega_1$.

Let $D\subseteq\omega_1$ be a closed and unbounded set
in $\omega_1$ such that for every $\alpha\in D$ for every $\beta<\alpha$ we have $\alpha_\beta<\alpha$\footnote{This
is a standard closure argument: to get such a $D$ first note that it is clearly closed; to prove that it is unbounded,
above an arbitrary $\gamma=\gamma_0<\omega_1$ construct $\gamma_1<\dots \gamma_n<\omega_1$ for
$n\in \N$ such that $\alpha_\beta<\gamma_{n+1}$ for every $\beta<\gamma_n$. This can
be achieved since we have only countably many $\beta<\gamma_n$. Now $\sup_{n\in \N}\gamma_n$ belongs
to $D$.}.
Let $C\subseteq\omega_1$ be the closed and unbounded subset of $\omega_1$ as in
Lemma \ref{club} taken for $f_0 \mu + \nu_0$ and $\{f_\xi\mu+\nu_\xi: 0<\xi<\omega_1\}$. Let $S\subseteq\omega_1$ be a stationary set
as in Lemma \ref{diamond-measures} for $\rho_\xi=f_\xi\mu+\nu_\xi$, $\xi < \omega_1$. 
Take $\alpha\in C\cap D\cap S$ ($C\cap D$ is a club as $C, D$ are clubs, and $S$ intersects every club). We have
\begin{enumerate}
\item[(a)]  $(f_0\mu+\nu_0)|C(K_\alpha)\in \overline{\{(f_\xi\mu+\nu_\xi)|C(K_\alpha): 0<\xi<\alpha\}}^{w^*}$ (as $\alpha\in C$),
\item[(b)] $f_\xi$ depends on $\alpha$ and
$\nu_\xi|C(K_{\beta'})=(\nu_\xi|C(K_{\alpha}))^{\beta'}$
 for all $\xi<\alpha$  and $\alpha\leq\beta'\leq\omega_1$ (as $\alpha\in D$),
\item[(c)]  $(M_{\alpha, n})_{n\in\omega}$ is $\{(f_\xi\mu+\nu_\xi)|C(K_\alpha):\xi<\alpha\}$ and 
$M_{\alpha,0} = (f_0\mu+\nu_0)|C(K_\alpha)$ (as $\alpha\in S$).
\end{enumerate}
So by (a) and (c), the condition of item (4) of the construction at stage $\alpha$ is satisfied. So
by (7)  and (b) we obtain that $f_0\mu+\nu_0 \in \overline{\{f_\xi\mu+\nu_\xi: 0<\xi<\alpha\}}^{w^*}$ as required.
\end{proof}

\bibliographystyle{amsplain}

\begin{thebibliography}{120}

\bibitem{argyros-ma} S. Argyros, 
\emph{On nonseparable Banach spaces.}
Trans. Am. Math. Soc. 270, 193--216 (1982). 

\bibitem{tensor} A. Avil\'es, G. Mart\'\i nez-Cervantes, J. Rodr\'\i guez, A. Rueda Zoca, 
\emph{Topological properties in tensor products of Banach spaces. }
J. Funct. Anal. 283, No. 12, Article ID 109688, 35 p. (2022). 


\bibitem{dales} H. G. Dales, F. Dashiell, A.  Lau, D. Strauss,  
\emph{Banach spaces of continuous functions as dual spaces}, Springer (2016).

\bibitem{devlin} K. Devlin, \emph{Variations on $\diamondsuit$}, J. Symbolic Logic 44 (1979), no. 1, 51--58.

\bibitem{kunen-hs} M. Dzamonja, K. Kunen, 
\emph{Measures on compact HS
spaces}. 
Fundam. Math. 143, No. 1, 41--54 (1993). 

\bibitem{kunen-ms} M. Dzamonja, K.  Kunen, 
\emph{Properties of the class of measure separable compact spaces}. 
Fundam. Math. 147, No. 3, 261-277 (1995). 

\bibitem{engelking} R. Engelking, \textit{General Topology, second edition}, Sigma Series in Pure Mathematics, vol. 6. Helderman Verlag, Berlin (1989), Translated from the Polish by the author.


\bibitem{fabian-etal} M. Fabian, P. Habala, P. H\'ajek, V. Montesinos Santaluc\'\i a, J.  Pelant, V.  Zizler, 
\emph{Functional analysis and infinite-dimensional geometry}. 
CMS Books in Mathematics/Ouvrages de Math\'ematiques de la SMC. 8. New York, NY: Springer.  (2001). 

\bibitem{rogerio-quotients} R. Fajardo, 
\emph{Quotients of indecomposable Banach spaces of continuous functions. }
Stud. Math. 212, No. 3, 259--283 (2012). 

\bibitem{fedorchuk} V. Fedorchuk, 
\emph{On the cardinality of hereditarily separable compact Hausdorff spaces.}
Sov. Math., Dokl. 16, 651--655 (1975); 

\bibitem{fr-plebanek} R. Frankiewicz, G. Plebanek, C. Ryll-Nardzewski, 
\emph{Between the Lindelof property and countable tightness}. 
Proc. Am. Math. Soc. 129, No. 1, 97-103 (2001). 

\bibitem{fremlin-ma} D. H. Fremlin, 
\emph{On compact spaces carrying Radon measures of uncountable Maharam type}. 
Fund. Math. 154, 295-304 (1997).

\bibitem{fremlin-book} D. H. Fremlin, 
\emph{Measure theory. Vol. 3. Measure algebras}, Corrected second printing 
of the 2002 original. Colchester: Torres Fremlin (2004).

\bibitem{dimension} D. G{\l}odkowski, \emph{A Banach space $C(K)$ reading the dimension of $K$},
J. Funct.  Anal. 285, (4), 15 August 2023, 109986.

\bibitem{hajek-etal} P. H\'ajek, V.  Montesinos Santaluc\'\i a, J.  Vanderwerff, V.  Zizler, 
\emph{Biorthogonal systems in Banach spaces.} CMS Books in Mathematics/Ouvrages de Math\'ematiques
 de la SMC. New York, NY: Springer (2008).

\bibitem{haydon-l1} R. Haydon, 
\emph{On Banach spaces which contain $\ell_1(\tau)$
and types of measures on compact spaces}. 
Isr. J. Math. 28, 313--324 (1977). 

\bibitem{haydon-main} R. Haydon, 
\emph{On dual $L_1$-spaces and injective bidual Banach spaces}. 
Isr. J. Math. 31, 142-152 (1978).

\bibitem{jech} T. Jech, 
\emph{Set theory}. The third millennium edition, revised and expanded. 
Springer Monographs in Mathematics. Berlin: Springer. (2003). 

\bibitem{kubis-book} J. K\c akol, W. Kubi\'s, M.  L\'opez-Pellicer, 
\emph{Descriptive topology in selected topics of functional analysis}. 
Developments in Mathematics 24. Berlin: Springer  (2011). 

\bibitem{kechris} A. Kechris, \emph{Classical Descriptive Set Theory}. Graduate Texts in Mathematics, vol. 154, Springer Verlag, Berlin, (1994).

\bibitem{few} P. Koszmider,
\emph{Banach spaces of continuous functions with few operators.}
Math. Ann. 330, No. 1, 151--183 (2004). 

\bibitem{fewsur} P. Koszmider, 
\emph{A survey on Banach spaces $C(K)$
with few operators.} 
Rev. R. Acad. Cienc. Exactas F\'\i s. Nat., Ser. A Mat., RACSAM 104, No. 2, 309--326 (2010). 

\bibitem{biorth} P. Koszmider, 
\emph{Some topological invariants and biorthogonal systems in Banach spaces}.
Extr. Math. 26, No. 2, 271--294 (2011). 

\bibitem{dichotomy} M. Krupski,  G. Plebanek, 
\emph{A dichotomy for the convex spaces of probability measures}. 
Topology Appl. 158, No. 16, 2184--2190 (2011). 

\bibitem{kunen-L} K. Kunen, 
\emph{A compact L-space under CH.} 
Topology Appl. 12, 283--287 (1981). 

\bibitem{kunen} K. Kunen, 
\emph{Set theory. An introduction to independence proofs}. 2nd print.
Studies in Logic and the Foundations of Mathematics, 102. 
Amsterdam-New York-Oxford: North-Holland. (1983). 

\bibitem{malykhin} V. Malyhin, 
\emph{On tightness and Suslin number in $exp X$ and in a product of spaces}. 
Sov. Math., Dokl. 13, 496--499 (1972). 

\bibitem{witek} W. Marciszewski,  G.  Plebanek, 
\emph{On Corson compacta and embeddings of $C(K)$
spaces}. 
Proc. Am. Math. Soc. 138, No. 12, 4281--4289 (2010). 

\bibitem{pfa} G. Mart\'\i nez-Cervantes, A.  Poveda, 
\emph{On the property (C) of Corson and other sequential properties of Banach spaces}. 
J. Math. Anal. Appl. 527, No. 2, Article ID 127441, 9 p. (2023). 

\bibitem{negrepontis} S. Negrepontis, 
\emph{Banach spaces and topology}.
Handbook of set-theoretic topology, 1045--1142 (1984). 

\bibitem{pelczynski} A. Pe{\l}czy\'nski, 
\emph{On Banach spaces containing $L_1(\mu)$}.
Studia Math. 30, 231--246 (1968). 

\bibitem{plebanek-1st} G. Plebanek, 
\emph{On Radon measures on first-countable spaces}. 
Fund. Math. 148, 159-164 (1995).

\bibitem{plebanek-small} G. Plebanek, 
\emph{Nonseparable Radon measures and small compact spaces}. 
Fundam. Math. 153, No. 1, 25--40 (1997). 

\bibitem{plebanek-app} G. Plebanek, 
\emph{Approximating Radon measures on first-countable compact spaces}. 
Colloq. Math. 86, No. 1, 15--23 (2000). 

\bibitem{plebanek-large} G. Plebanek,
 \emph{On compact spaces carrying random [Radon] measures of large Maharam type}. 
Acta Univ. Carol., Math. Phys. 43, No. 2, 87--99 (2002). 

\bibitem{plebanek-convex} G. Plebanek, 
\emph{Convex Corson compacta and Radon measures}. 
Fund. Math. 175, No. 2, 143--154 (2002). 


\bibitem{plebanek-musing} G. Plebanek, \emph{Musing on Kunen's compact L-space}.
Topology Appl. 323, Article ID 108294, 10 p. (2023). 

\bibitem{damian} G. Plebanek, D. Sobota, 
\emph{Countable tightness in the spaces of regular probability measures}. 
Fundam. Math. 229, No. 2, 159--169 (2015). 


\bibitem{pol-c} R. Pol, \emph{On a question of H. H. Corson and some related problems}, Fund. Math., 109, 2, 143--154 (1980).

\bibitem{pol-sd} R. Pol, \emph{Note on the spaces $P(S)$ of regular probability measures 
whose topology is determined by countable subsets}, Pacific J. Math. 100 (1982)
185--201.

\bibitem{rudin} W. Rudin, 
\emph{Principles of Mathematical Analysis}. 
McGraw-Hill (1976).

\bibitem{semadeni} Z. Semadeni, \emph{Banach spaces of continuous functions}, PWN, Warszawa, (1971).

\bibitem{spurny} J. Spurn\'{y}, 
\emph{Borel sets and functions in topological spaces}. Acta Math. Hung. 129, 47-69 (2010).

\bibitem{talagrand} M. Talagrand, 
S\'eparabilit\'e vague dans l'espace des mesures sur un compact. 
Isr. J. Math. 37, 171--180 (1980). 

\bibitem{vwz} J. Vanderwerff, J. Whitfield, V.  Zizler, 
\emph{Marku\v sevi\v c bases and Corson compacta in duality}.
Can. J. Math. 46, No. 1, 200--211 (1994). 

\bibitem{zizler} V. Zizler, 
\emph{Nonseparable Banach spaces.}
Johnson, W. B. (ed.) et al., Handbook of the geometry of Banach spaces. Volume 2.
 Amsterdam: North-Holland.  1743--1816 (2003). 

\end{thebibliography}

\end{document}